\numberwithin{equation}{section}
\theoremstyle{plain}
\newtheorem{thm}{Theorem}[section]
\newtheorem{proposition}[thm]{Proposition}
\newtheorem{corollary}[thm]{Corollary}
\newtheorem{lemma}[thm]{Lemma}
\newtheorem{assumption}[thm]{Assumption}
\theoremstyle{definition}
\newtheorem{remark}[thm]{Remark}
\DeclareMathOperator{\E}{{\mathbb E}}
\DeclareMathOperator{\R}{{\mathbb R}}
\DeclareMathOperator{\N}{{\mathbb N}}
\DeclareMathOperator{\tr}{tr}
\providecommand{\eps}{\varepsilon}
\renewcommand{\phi}{\varphi}
\renewcommand{\theta}{\vartheta}
\providecommand{\norm}[1]{\lVert #1 \rVert}
\providecommand{\scapro}[2]{\langle #1,#2 \rangle}
\renewcommand{\le}{\leqslant}
\renewcommand{\ge}{\geqslant}
\begin{document}

\title[Upper bounds for the reconstruction error of PCA]{Non-asymptotic upper bounds for the reconstruction error of PCA}
\author{Markus Rei\ss}
\author{Martin Wahl}
\address{Institut f\"{u}r Mathematik, Humboldt-Universit\"{a}t zu Berlin, Unter den Linden 6, 10099 Berlin, Germany.}
\email{mreiss@math.hu-berlin.de and martin.wahl@math.hu-berlin.de}
%\date{This version: \today}
\keywords{Principal component analysis, Reconstruction error, Excess risk, Spectral projectors, Concentration inequalities}

\subjclass[2010]{Primary 62H25; secondary 15A42, 60F10}

\thanks{The research of Martin Wahl has been partially funded by Deutsche Forschungsgemeinschaft (DFG) through grant CRC 1294 ``Data Assimilation'', Project (A4) ``Nonlinear statistical inverse problems with random observations''.}

\begin{abstract}
We analyse the reconstruction error of principal component analysis (PCA) and prove non-asymptotic upper bounds for the corresponding excess risk. These bounds unify and improve existing upper bounds from the literature. In particular, they give oracle inequalities under mild eigenvalue conditions. The bounds reveal that the excess risk differs significantly from usually considered subspace distances based on canonical angles. Our approach relies on the analysis of empirical spectral projectors combined with concentration inequalities for weighted empirical covariance operators and empirical eigenvalues.
\end{abstract}

\maketitle
%\tableofcontents

\section{Introduction}

Principal component analysis (PCA) and variants like functional PCA or kernel PCA are standard tools in high-dimensional statistics and unsupervised learning, see e.g. Jolliffe \cite{J02}, Horv\'{a}th and Kokoszka \cite{MR2920735} and Sch\"olkopf and Smola \cite{SS2001} for an overview. Usually, they are employed as a first step to reduce the high dimensionality of the data before methods for the specific task come into play. The basic motivation for this work is that the understanding of the error incurred by PCA in high dimensions is so far limited. In fact,  Blanchard, Bousquet, and Zwald \cite{BBZ07} exhibit upper bounds for the excess risk of the reconstruction error which give different rates in sample size and dimensionality depending on spectral properties of the covariance operator and thus exhibit complex facets of this classical statistical method. By combining spectral projector calculus with concentration inequalities, we are able to give tight bounds for the excess risk which clarify the underlying error structure. This gives rise to oracle risk bounds which in wide generality prove that the error due to projecting on empirical principal components is negligible compared to the error due to optimal dimension reduction via the population version of~PCA.

We include functional PCA and kernel PCA  in the standard multivariate PCA setting by allowing for general Hilbert spaces $\mathcal{H}$. PCA is commonly derived by minimising the reconstruction error $\E[\norm{X-PX}^2]$ over all orthogonal projections $P$ of rank $d$, where $X$ is an $\mathcal H$-valued random variable and $d$ is a given dimension. Replacing the population covariance $\Sigma$ by an empirical covariance $\hat\Sigma$, PCA computes the orthogonal projection $\hat P_{\le d}$ onto the eigenspace of the $d$ leading eigenvalues of $\hat\Sigma$. Put differently, $\hat P_{\le d}$  minimises the empirical reconstruction error and it is natural to measure its performance by the excess risk ${\mathcal E}^{PCA}_d$, that is by the difference between the reconstruction errors of $\hat P_{\le d}$ and the overall minimiser $P_{\le d}$. It is easy to see that ${\mathcal E}^{PCA}_d=\langle \Sigma, P_{\le d}-\hat{P}_{\le d}\rangle$ holds with respect to the Hilbert-Schmidt scalar product.

Comparing the excess risk ${\mathcal E}^{PCA}_d$ to the Hilbert-Schmidt distance $\norm{\hat{P}_{\le d}-P_{\le d}}_2$, which is up to a constant equal to the $l^2$-norm of the sines of the canonical angles between the corresponding subspaces, the main difference is that ${\mathcal E}^{PCA}_d$ remains small if $\hat{P}_{\le d}$ projects into eigenspaces with eigenvalues that are not much smaller than the $d$ largest ones. In the extreme case $\lambda_d=\lambda_{d+1}$, where the $d$th and $(d+1)$st largest eigenvalues coincide, the Hilbert-Schmidt distance is not even uniquely defined. Statistically, the reconstruction error is not only the basis for the very definition of PCA, but it is also more adequate for many tasks like reconstruction and prediction than the Hilbert-Schmidt distance. A typical example is given by the prediction error of principal component regression, for which Wahl \cite{W18} establishes a clear connection with the excess risk of PCA. Mathematically, an arbitrarily small spectral gap $\lambda_d-\lambda_{d+1}$ requires new techniques because spectral perturbation results deteriorate as the spectral gap shrinks. Our aim is to treat even the isotropic case  $\Sigma=\sigma^2I$, where the covariance is a multiple of the identity matrix and ${\mathcal E}^{PCA}_d=0$ holds.

Classical results for PCA provide limit theorems for the empirical eigenvalues and eigenvectors when the sample size $n$ tends to infinity, see e.g.  Anderson \cite{And} and Dauxois, Pousse and Romain \cite{DPR82}. For the Hilbert-Schmidt distance, the most well-known result is the Davis-Kahan $\sin \Theta$ theorem \cite{DK69}, which gives an upper bound in terms of the eigenvalue separation and the Hilbert-Schmidt norm of $\hat\Sigma-\Sigma$. In many cases, more precise bounds can be derived using higher-order spectral perturbation results. Nadler \cite{N08} obtains non-asymptotic bounds for the spiked covariance model and studies phase transitions when dimension and sample size tend to infinity simultaneously. Mas and Ruymgaart \cite{MR15} and Jirak \cite{M15} ask for near-optimal bounds for functional PCA with exponential or polynomial spectral decay. Koltchinskii and Lounici \cite{KL14,KL15a,KL15b,KL16} derive tight concentration bounds for the operator norm of $\hat\Sigma-\Sigma$ and study empirical spectral projectors in the so called effective rank setting.

%Most authors treat spectral projectors via resolvents and complex functional calculus. For the excess risk, however, these methods do not yield optimal bounds in the case of small spectral gaps.

Bounds for the reconstruction error  using the theory of empirical risk minimisation (ERM) are derived by Shawe-Taylor et al. \cite{SWCK02,SWCK05} and Blanchard, Bousquet, and Zwald \cite{BBZ07}. While \cite{SWCK05} only establishes a slow $n^{-1/2}$-rate, in \cite{BBZ07} the existence of faster rates, difficult to quantify explicitly, is discovered. We take up the ERM approach in Section \ref{SecERM} below and establish by a simple recursion argument  upper bounds, based on an interplay between a slow $n^{-1/2}$-rate and a fast $n^{-1}$-rate. These bounds clarify and partly improve the existing theory, while the proofs are short and transparent such that they have a value on their own.

Yet, we observe that the basic inequality of ERM prevents us from deriving good bounds in basic settings like isotropic covariance. In order to obtain tight bounds in more generality for ${\mathcal E}^{PCA}_d$ in Section \ref{SecNewBounds},  we employ a more sophisticated recursion argument in combination with concentration inequalities for weighted empirical covariance operators and empirical eigenvalues. This is achieved by an algebraic projector-based calculus that allows us to take advantage of the presence of the true covariance $\Sigma$ in the expression for the excess risk and to avoid difficulties arising from a straightforward application of standard perturbation theory, compare Remarks \ref{RemPT1}, \ref{RemPT2}, and \ref{RemRelativeEVCond} for more details. Considering standard examples in high-dimensional statistics and functional data analysis like spiked covariance models and exponential or polynomial eigenvalue decay, Section \ref{SecExa} shows how the general bounds apply and that existing bounds in the literature can be rediscovered and in some important aspects improved. The overall finding is that in all these cases a tight oracle inequality holds.

Finally, we discuss in Section \ref{SecDisc} how our results can be transferred to the subspace distance and, for instance, how the projector calculus yields the Davis-Kahan  $\sin\Theta$ theorem and other spectral perturbation results in a straight-forward manner. Moreover, a CLT for the excess risk is presented for fixed dimensions, acting as a benchmark for the high-dimensional results and revealing a surprising inhomogeneity of the excess risk with respect to the eigenvalue spacings. We also state the concentration inequalities for individual empirical eigenvalues that might be of independent interest. Section~\ref{SecMainTools} supplies the main tools from projector-based calculus, $\omega$-wise error decompositions and concentration inequalities.  Section \ref{SecProofs} is devoted to the proofs. In two appendices we collect proofs for the asymptotic and linear expansion results appearing in the discussion section.

\section{Main results}\label{SecMainRes}

\subsection{The reconstruction error of PCA}\label{SecRecError}

Let $X$ be a centered random variable taking values in a separable Hilbert space $(\mathcal{H},\langle\cdot,\cdot\rangle)$ of dimension $p\in\N\cup\{+\infty\}$ and let $\norm{\cdot}$ denote the norm on $\mathcal{H}$ defined by $\norm{u}=\sqrt{\langle u,u\rangle}$.

\begin{assumption}\label{SubGauss}
Suppose that $X$ is sub-Gaussian, meaning that $\mathbb{E}[\|X\|^2]$ is finite and that there is a constant $C_1$ with
\[
 \|\langle X,u\rangle\|_{\psi_2}:=\sup_{k\ge 1}k^{-1/2}\mathbb{E}\big[ |\langle X,u\rangle|^{k} \big] ^{1/k} \le C_1\mathbb{E}[\langle X,u\rangle^2]^{1/2}
\]
for all $u\in\mathcal{H}$.
\end{assumption}
If $X$ is Gaussian, then it is easy to see that Assumption~\ref{SubGauss} holds with $C_1=1$ (cf. the first formula in \cite[Equation (5.6)]{V12}).

The covariance operator of $X$ is denoted by
\begin{equation*}
\Sigma=\mathbb{E}[ X\otimes X].
\end{equation*}
By the spectral theorem there exists a sequence $\lambda_1\ge \lambda_2\ge\dots>0$ of  positive eigenvalues (which is either finite or converges to zero) together with an orthonormal system of eigenvectors $u_1,u_2,\dots$ such that $\Sigma$ has the
spectral representation
\begin{equation*}
\Sigma=\sum_{j\ge 1}\lambda_j P_j,
\end{equation*}
with rank-one projectors $P_j=u_j\otimes u_j$, where $(u\otimes v)x=\langle v,x\rangle u$, $x\in \mathcal{H}$. Note that the choice of $u_j$ and $P_j$ is non-unique in case of multiple eigenvalues~$\lambda_j$.

Without loss of generality we shall assume that the eigenvectors $u_1,u_2,\dots$ form an orthonormal basis of $\mathcal{H}$ such that $\sum_{j\ge 1}P_j=I$. We write
\[
P_{\le d}=\sum_{j\le d}P_j,\quad P_{>d}=I-P_{\le d}=\sum_{k>d}P_k
\]
for the orthogonal projections onto the linear subspace spanned by the first $d$ eigenvectors of $\Sigma$, and onto its orthogonal complement.

Let $X_1,\dotsc,X_n$ be $n$ independent copies of $X$ and let
\begin{equation*}
\hat{\Sigma}=\frac{1}{n}\sum_{i=1}^nX_i\otimes X_i%=\frac{1}{n}\sum_{i=1}^n X_iX_i^T
\end{equation*}
be the sample covariance. Again, there exists a sequence $\hat{\lambda}_1\ge \hat{\lambda}_2\ge\dots\ge 0$ of eigenvalues together with an orthonormal basis of eigenvectors $\hat{u}_1,\hat{u}_2,\dots$ such that we can write
\begin{equation*}
\hat{\Sigma}=\sum_{j\ge 1}\hat{\lambda}_j \hat{P}_j\text{ with } \hat{P}_j=\hat{u}_j\otimes\hat{u}_j
\end{equation*}
and
\begin{equation*}
\hat{P}_{\le d}=\sum_{j\le d}\hat{P}_j,\quad \hat{P}_{> d}=I-\hat{P}_{\le d}=\sum_{k> d}\hat{P}_k.
\end{equation*}
For linear operators $S,T:{\mathcal H}\to{\mathcal H}$ we make use of  trace and adjoint $\tr(S),S^\ast$ to define the Hilbert-Schmidt or Frobenius norm and scalar product
\[\norm{S}_2^2=\tr(S^\ast S),\quad \scapro{S}{T}=\tr(S^\ast T)\]
as well as the operator norm $\norm{S}_\infty=\max_{u\in{\mathcal H},\norm{u}=1}\norm{Su}$. For covariance operators $\Sigma$ this gives $\norm{\Sigma}_\infty=\lambda_1$ and $\norm{\Sigma}_2^2=\sum_{j\ge 1}\lambda_j^2$. Under Assumption~\ref{SubGauss}, $\Sigma$ is a trace class operator (see e.g. \cite[Theorem III.2.3]{VTC87}) and all quantities are indeed well defined. In addition, for $r\ge 1$, we use the abbreviations $\tr_{>r}(\Sigma)$ and $\tr_{\ge r}(\Sigma)$ for $\sum_{j> r}\lambda_j$ and $\sum_{j\ge r}\lambda_j$, respectively.

%In addition, for $r\ge 1$, we use the abbreviations $\tr_{\le r}(\Sigma)$, $\tr_{< r}(\Sigma)$, $\tr_{>r}(\Sigma)$, $\tr_{\ge r}(\Sigma)$ for $\sum_{j\le r}\lambda_j$, %$\sum_{j< r}\lambda_j$, $\sum_{j> r}\lambda_j$, and $\sum_{j\ge r}\lambda_j$, respectively.

Introducing the class
\[{\mathcal P}_{d}=\{P:{\mathcal H}\to{\mathcal H}\,|\, P\text{ is orthogonal projection of rank }d\},\]
the (population) reconstruction error of $P\in{\mathcal P}_{d}$ is defined by
\begin{equation*}
R(P)=\mathbb{E}[ \|X-PX\|^2]=\scapro{\Sigma}{I-P}.
\end{equation*}
The fundamental idea behind PCA is that $P_{\le d}$ satisfies
\begin{equation}\label{EqVarPop}
P_{\le d}\in\operatorname{argmin}_{P\in {\mathcal P}_{d}}\limits R(P),\quad  R(P_{\le d})=\tr_{> d}(\Sigma).
\end{equation}
Similarly, the empirical reconstruction error of $P\in{\mathcal P}_{d}$ is defined by
\begin{equation*}
R_n(P)=\frac{1}{n}\sum_{i=1}^n \|X_i-PX_i\|^2=\scapro{\hat\Sigma}{I-P},
\end{equation*}
and we have
\begin{equation}\label{EqVarEmp}
\hat{P}_{\le d}\in\operatorname{argmin}_{P\in {\mathcal P}_{d}}\limits R_n(P).
\end{equation}
The excess risk of the PCA projector $\hat{P}_{\le d}$ is thus given by
\begin{equation}\label{EqExcessRisk}
{\mathcal E}^{PCA}_d:=R(\hat{P}_{\le d})-R( P_{\le d})=\langle \Sigma, P_{\le d}-\hat{P}_{\le d}\rangle.
\end{equation}
By \eqref{EqVarPop} the excess risk ${\mathcal E}^{PCA}_d$ defines a non-negative loss function in the decision-theoretic sense for the estimator $\hat P_{\le d}$ under the parameter $\Sigma$. Our main objective is to find non-asymptotic bounds for
\[
\E[{\mathcal E}^{PCA}_d]=\mathbb{E} R(\hat{P}_{\le d})-\min_{P\in\mathcal{P}_d}R(P),
\]
the decision-theoretic risk. In some situations we also consider the problem of deriving standard oracle inequalities, by allowing to replace the constant $1$ in front of the minimum by a larger constant.

Throughout the paper, $c$ and $C$ denote constants. We make the
convention that these constants are not necessarily the same at each occurrence. They usually depend on $C_1$ from Assumption \ref{SubGauss}. For our expectation bounds we make $C$ more explicit by using the constant $C_2$, where $C_2>0$ is the smallest constant such that
\begin{equation} \label{EqMomentConst}
\mathbb{E}[ \Vert P_{j}(\Sigma-\hat{\Sigma}) P_{k}\Vert_2^2]\le C_2\delta \lambda_j\lambda_k/n
\end{equation}
with $\delta=1$ if $j\neq k$ and $\delta=2$ otherwise. It is easy to check that \eqref{EqMomentConst} holds with $C_2\le 16C_1^4$ and that for $X$ Gaussian \eqref{EqMomentConst} holds with $C_2=1$.
\subsection{ERM-bounds for the excess risk}\label{SecERM}

A natural approach to derive upper bounds for the excess risk is to follow the standard theory of empirical risk minimisation (ERM). The important basic inequality in ERM is
\begin{equation}\label{EqBasicInequality}
0\le \langle \Sigma, P_{\le d}-\hat{P}_{\le d}\rangle\le \langle \Sigma-\hat{\Sigma}, P_{\le d}-\hat{P}_{\le d}\rangle=\langle \Delta, P_{\le d}-\hat{P}_{\le d}\rangle
\end{equation}
with
\[\Delta=\Sigma-\hat{\Sigma},\]
which follows from \eqref{EqVarPop} and \eqref{EqVarEmp}. This route has been taken by Blanchard, Bousquet, and Zwald \cite{BBZ07}, who applied sophisticated arguments from empirical process theory, based on  Bartlett, Bousquet, and Mendelson \cite{BBM05}. Let us derive some simple non-asymptotic expectation bounds from \eqref{EqBasicInequality}, which will set the stage for more refined results later.

\begin{proposition}\label{BBZ} We have
\begin{equation*}
{\mathcal E}^{PCA}_d\le \min\Big(\sqrt{2d}\|\Delta\|_2,\, \frac{2\|\Delta\|_2^2}{\lambda_d-\lambda_{d+1}}\Big)
\end{equation*}
with the convention $x/0:=\infty$. With Assumption \ref{SubGauss}
\begin{equation}\label{EqBBZ}
\mathbb{E}[{\mathcal E}^{PCA}_d ]\le \min\Big(\frac{\sqrt{4C_2d}\tr(\Sigma)}{\sqrt n},\,\frac{4C_2\tr^2(\Sigma)}{n(\lambda_d-\lambda_{d+1})}\Big)
\end{equation}
follows, where $C_2$ is the constant in \eqref{EqMomentConst}.
\end{proposition}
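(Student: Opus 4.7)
The plan is to start from the ERM basic inequality \eqref{EqBasicInequality} and combine Cauchy--Schwarz with a simple projector-algebra identity that relates $\|P_{\le d}-\hat P_{\le d}\|_2^2$ to the excess risk via the spectral gap.

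First I would apply the Hilbert--Schmidt Cauchy--Schwarz inequality to \eqref{EqBasicInequality} to obtain ${\mathcal E}^{PCA}_d\le \|\Delta\|_2\,\|P_{\le d}-\hat P_{\le d}\|_2$. Since $P_{\le d}$ and $\hat P_{\le d}$ are orthogonal projections of equal rank $d$,
\[
\|P_{\le d}-\hat P_{\le d}\|_2^2=\tr\bigl((P_{\le d}-\hat P_{\le d})^2\bigr)=2d-2\tr(P_{\le d}\hat P_{\le d})\le 2d,
\]
which yields the first half of the minimum: ${\mathcal E}^{PCA}_d\le \sqrt{2d}\,\|\Delta\|_2$.

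Next I would derive the spectral-gap half. Setting $s:=\tr(P_{\le d}(I-\hat P_{\le d}))=\tr((I-P_{\le d})\hat P_{\le d})$ (the two are equal because both projections have rank $d$), the identity above gives $\|P_{\le d}-\hat P_{\le d}\|_2^2=2s$. On the other hand, expanding the Hilbert--Schmidt scalar product spectrally with $a_j=\tr(P_j\hat P_{\le d})\in[0,1]$ and using $\sum_j a_j=d$,
\[
{\mathcal E}^{PCA}_d=\sum_{j\le d}\lambda_j(1-a_j)-\sum_{k>d}\lambda_k a_k\ge \lambda_d \sum_{j\le d}(1-a_j)-\lambda_{d+1}\sum_{k>d}a_k=(\lambda_d-\lambda_{d+1})s.
\]
Hence $\|P_{\le d}-\hat P_{\le d}\|_2^2\le 2{\mathcal E}^{PCA}_d/(\lambda_d-\lambda_{d+1})$. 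Plugging this bound back into ${\mathcal E}^{PCA}_d\le \|\Delta\|_2\,\|P_{\le d}-\hat P_{\le d}\|_2$ and solving the resulting self-bounding inequality $({\mathcal E}^{PCA}_d)^2\le 2\|\Delta\|_2^2\,{\mathcal E}^{PCA}_d/(\lambda_d-\lambda_{d+1})$ produces ${\mathcal E}^{PCA}_d\le 2\|\Delta\|_2^2/(\lambda_d-\lambda_{d+1})$. The self-bounding trick (rather than any perturbation argument) is the one conceptual step worth highlighting; everything else is Cauchy--Schwarz and projector algebra.

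Finally, to pass to expectations under Assumption \ref{SubGauss}, I would compute the second moment of $\|\Delta\|_2$ block-wise using \eqref{EqMomentConst}:
\[
\E\|\Delta\|_2^2=\sum_{j,k\ge 1}\E\|P_j\Delta P_k\|_2^2\le \frac{C_2}{n}\Bigl(\sum_{j\neq k}\lambda_j\lambda_k+2\sum_j\lambda_j^2\Bigr)\le \frac{2C_2\tr(\Sigma)^2}{n}.
\]
Inserting this into the second branch of the minimum yields $4C_2\tr(\Sigma)^2/(n(\lambda_d-\lambda_{d+1}))$, and applying Jensen's inequality $\E\|\Delta\|_2\le (\E\|\Delta\|_2^2)^{1/2}\le \sqrt{2C_2/n}\,\tr(\Sigma)$ to the first branch yields $\sqrt{4C_2 d}\,\tr(\Sigma)/\sqrt{n}$, completing \eqref{EqBBZ}. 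No real obstacle arises; the only thing to be careful about is the factor $2$ from the diagonal terms in \eqref{EqMomentConst}, which is absorbed cleanly into the bound $\tr(\Sigma)^2+\|\Sigma\|_2^2\le 2\tr(\Sigma)^2$.
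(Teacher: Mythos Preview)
Your proof is correct and follows essentially the same route as the paper: Cauchy--Schwarz on the basic inequality \eqref{EqBasicInequality}, the trivial bound $\|P_{\le d}-\hat P_{\le d}\|_2^2\le 2d$, the Davis--Kahan type inequality $\|P_{\le d}-\hat P_{\le d}\|_2^2\le 2{\mathcal E}^{PCA}_d/(\lambda_d-\lambda_{d+1})$ combined with the self-bounding recursion, and finally \eqref{EqMomentConst} for the expectation. The only cosmetic difference is that you derive the Davis--Kahan inequality directly from the spectral expansion, whereas the paper defers this to Lemma~\ref{SpectralDec} and \eqref{EqERvsHS}; both arguments are the same computation.
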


\begin{remark}\label{RemConvChoice1}
By \eqref{EqVarPop} the left-hand side in \eqref{EqBasicInequality} does not depend on the choice of $P_{\le d}$ if $\lambda_d=\lambda_{d+1}$, while the right-hand side in general does. Nevertheless, since the actual choice of $P_{\le d}$ does not alter the final result in Proposition \ref{BBZ}, we let this choice unspecified and make the convention that the $P_j$ have been fixed in advance.
\end{remark}

\begin{remark}
Extending the terminology of \cite{BBZ07}, we call the first and the second part in \eqref{EqBBZ} global and local bound, respectively, referring to the dependence on specific spectral gaps or not. The expected excess risk is thus bounded by a slow global $n^{-1/2}$-rate as well as by a fast local $n^{-1}$-rate which depends on the spectral gap $\lambda_d-\lambda_{d+1}$. For \eqref{EqBBZ} to hold only the fourth moment bound \eqref{EqMomentConst} is required instead of the full Assumption~\ref{SubGauss}.
\end{remark}

\begin{proof}[Proof of Proposition \ref{BBZ}]
From \eqref{EqBasicInequality}  we obtain
\begin{equation}\label{EqBBZProof}
({\mathcal E}^{PCA}_d)^2 \le \| \Delta\|_2^2 \|  P_{\le d}-\hat{P}_{\le d}\|_2^2
\end{equation}
by the Cauchy-Schwarz inequality. Since orthogonal projectors are idempotent and self-adjoint, we have $\scapro{ P_{\le d}}{\hat{P}_{\le d}}=\norm{P_{\le d}\hat{P}_{\le d}}_2^2\ge 0$, and thus
\[
\Vert P_{\le d}-\hat{P}_{\le d}\Vert_2^2=2(d-\langle P_{\le d},\hat{P}_{\le d}\rangle)\le 2d.
\]
Insertion into \eqref{EqBBZProof} yields the first part of the bound. The second part of the bound follows from a short recursion argument. Indeed, we have
\begin{equation*}
\Vert P_{\le d}-\hat{P}_{\le d}\Vert_2^2\le\frac{2{\mathcal E}^{PCA}_d}{\lambda_d-\lambda_{d+1}},
\end{equation*}
which is a variant of the Davis-Kahan inequality and follows by simple projector calculus, see Lemma \ref{SpectralDec} and \eqref{EqERvsHS} below. We obtain
\begin{equation*}
({\mathcal E}^{PCA}_d)^2\le \|\Delta\|_2^2\frac{2{\mathcal E}^{PCA}_d}{\lambda_d-\lambda_{d+1}}.
\end{equation*}
This yields the second part of the bound. Finally, the expectation bound \eqref{EqBBZ} follows from inserting \eqref{EqMomentConst}.
\end{proof}

The global rate can be improved by using the variational characterisation of partial traces again. In the case
$\Sigma=I+xP_{\le d}$, for instance, the global rate $p\sqrt{d/n}$ of Proposition \ref{BBZ} is improved to $d\sqrt{p/n}$. The latter is optimal for $d\le p/2$ and spectral gap $x=\sqrt{p/n}$, see the lower bound \eqref{EqLowerBound} below.

\begin{proposition}\label{GlobalBound}
Grant Assumption \ref{SubGauss}. Then we have
\begin{equation*}
\mathbb{E}[{\mathcal E}^{PCA}_d]\le C\sum_{j\le d}\max\bigg( \sqrt{\frac{\lambda_j\operatorname{tr}_{\ge j}(\Sigma) }{n}},\frac{\operatorname{tr}_{\ge j}(\Sigma)}{n}\bigg),
\end{equation*}
where $C>0$ is a constant depending only on $C_1$.
\end{proposition}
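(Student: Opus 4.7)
The plan is to refine the Cauchy--Schwarz step in the proof of Proposition \ref{BBZ}. The starting point is still the basic ERM inequality
\[
{\mathcal E}^{PCA}_d \le \scapro{\Delta}{P_{\le d}-\hat P_{\le d}},
\]
but instead of collapsing $\|P_{\le d}-\hat P_{\le d}\|_2\le\sqrt{2d}$, which throws away the eigenvalue decay, I would decompose the right-hand side level by level using the variational characterisation of partial traces a second time.

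First I would use $\tr(\Sigma\hat P_{\le d})=\tr(\hat\Sigma\hat P_{\le d})+\scapro{\Delta}{\hat P_{\le d}}=\sum_{j\le d}\hat\lambda_j+\scapro{\Delta}{\hat P_{\le d}}$ to rewrite
\[
{\mathcal E}^{PCA}_d=\sum_{j=1}^d(\lambda_j-\hat\lambda_j)-\scapro{\Delta}{\hat P_{\le d}},
\]
and then, for each $k\le d$, apply the Ky Fan identity $\sum_{j\le k}\hat\lambda_j=\max_{P\in{\mathcal P}_k}\scapro{\hat\Sigma}{P}\ge\scapro{\hat\Sigma}{P_{\le k}}$ to extract the partial-sum bound
\[
\sum_{j=1}^k(\lambda_j-\hat\lambda_j)\le\scapro{\Delta}{P_{\le k}}.
\]
The dual application, namely $\sum_{j\le k}\lambda_j\ge\scapro{\Sigma}{\hat P_{\le k}}$, gives $\scapro{\Delta}{\hat P_{\le k}}\le\sum_{j\le k}(\lambda_j-\hat\lambda_j)$. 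Together these two variational inequalities sandwich $\scapro{\Delta}{\hat P_{\le d}}$ and the partial sums $\sum_{j\le k}(\lambda_j-\hat\lambda_j)$, so after an Abel-type summation in $k$ the excess risk is dominated by a sum $\sum_{k=1}^d R_k$, where each $R_k$ is a quadratic form of $\Delta$ supported only on the tail eigendirections $u_k,u_{k+1},\dots$ (and their empirical counterparts), rather than on all of $\{u_j\}$ at once.

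Each $R_k$ then lives effectively on a subspace whose largest eigenvalue of $\Sigma$ is $\lambda_k$ and whose trace is $\tr_{\ge k}(\Sigma)$. The moment bound \eqref{EqMomentConst}, applied to pairs $(P_i,P_j)$ with $i,j\ge k$, together with a standard Bernstein-type estimate for quadratic forms of sub-Gaussian vectors, yields
\[
\E[R_k]\le C\max\Bigl(\sqrt{\lambda_k\tr_{\ge k}(\Sigma)/n},\;\tr_{\ge k}(\Sigma)/n\Bigr),
\]
and summing over $k=1,\dots,d$ gives the desired bound.

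The main obstacle will be carrying out the level-by-level algebraic decomposition so that only the tail $\tr_{\ge k}(\Sigma)$, rather than the full trace $\tr(\Sigma)$, enters each summand; this is precisely the step that the introduction flags as using the variational characterisation of partial traces "again". A naive triangle inequality or Cauchy--Schwarz would produce $\tr(\Sigma)$ in every term, whereas the second use of Ky Fan localises the $k$-th contribution to the subspace spanned by $\{u_j,u_{j+1},\dots\}$. It is also the reason why $\hat P_{\le d}$ must be handled algebraically, without appealing to perturbation theory or spectral-gap assumptions.
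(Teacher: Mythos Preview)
Your identity ${\mathcal E}^{PCA}_d=\sum_{j\le d}(\lambda_j-\hat\lambda_j)-\scapro{\Delta}{\hat P_{\le d}}$ and the two Ky Fan inequalities are correct, but they do not by themselves deliver the tail localisation. Plugging the first Ky Fan bound (for $k=d$) into the identity simply reproduces the basic inequality ${\mathcal E}^{PCA}_d\le\scapro{\Delta}{P_{\le d}-\hat P_{\le d}}$, while the second one, $\scapro{\Delta}{\hat P_{\le k}}\le\sum_{j\le k}(\lambda_j-\hat\lambda_j)$, points the wrong way for upper-bounding $-\scapro{\Delta}{\hat P_{\le d}}$. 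The ``Abel-type summation'' you invoke is exactly where the content must be, and you have not carried it out; I do not see how telescoping these partial-sum sandwiches produces terms $R_k$ that depend on $\Delta$ only through $P_{\ge k}\Delta P_{\ge k}$. If your $R_k$ still contains the empirical projectors $\hat P_j$, as your parenthetical suggests, then neither \eqref{EqMomentConst} nor a Hanson--Wright inequality applies to it directly.

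The paper's route is different at the decisive step. After $\scapro{-\Delta}{\hat P_{\le d}}\le\sup_{P\in\mathcal P_d}\scapro{-\Delta}{P}=\sum_{j\le d}\lambda_j(-\Delta)$, the localisation comes from the min-max (Cauchy interlacing) inequality applied to the eigenvalues of the \emph{fluctuation} $-\Delta$ itself:
\[
\lambda_j(-\Delta)\le\lambda_1\bigl(P_{\ge j}(-\Delta)P_{\ge j}\bigr)\le\|P_{\ge j}\Delta P_{\ge j}\|_\infty.
\]
This is a variational characterisation, but for $-\Delta$ against the deterministic projector $P_{\ge j}$, not Ky Fan for $\Sigma$ or $\hat\Sigma$. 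After taking expectations (where $\scapro{\Delta}{P_{\le d}}$ vanishes), the bound $\E[\|P_{\ge j}\Delta P_{\ge j}\|_\infty]\le C\max\bigl(\sqrt{\lambda_j\tr_{\ge j}(\Sigma)/n},\,\tr_{\ge j}(\Sigma)/n\bigr)$ is precisely the Koltchinskii--Lounici operator-norm estimate \cite[Theorem~4]{KL14} applied to $X'=P_{\ge j}X$; the second-moment bound \eqref{EqMomentConst} alone controls Hilbert--Schmidt quantities and does not give this $\max$ structure.
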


\begin{proof}
Using \eqref{EqBasicInequality}, we have
\begin{equation}\label{EqExpGlobalB}
{\mathcal E}^{PCA}_d \le \langle \Delta, P_{\le d}\rangle+\sup_{P\in\mathcal{P}_d}\langle -
\Delta, P\rangle.
\end{equation}
By the variational characterisation of partial traces (cf. \eqref{EqVarPop}, \eqref{EqVarEmp}) and the min-max characterisation of eigenvalues, see e.g. \cite[Chapter 28]{L02}, we get
\[
\sup_{P\in\mathcal{P}_d}\langle -\Delta, P\rangle\le  \sum_{j\le d}\norm{P_{\ge j}\Delta P_{\ge j}}_\infty.
\]
Noting that $\E[\langle \Delta, P_{\le d}\rangle]=0$, we conclude that
\begin{equation*}
\E[{\mathcal E}^{PCA}_d]\le \sum_{j\le d}\E\big[\norm{P_{\ge j}\Delta P_{\ge j}}_\infty\big].
\end{equation*}
Finally, we apply the moment bound for sample covariance operators obtained by Koltchinskii and Lounici \cite{KL14}. Consider  $X'=P_{\ge j}X$, $X_i'=P_{\ge j}X_i$ which again satisfy Assumption \ref{SubGauss} (with the same constant $C_1$) and lead to the covariance and the sample covariance
\begin{equation}\label{EqSigma'}
\Sigma'=P_{\ge j}\Sigma P_{\ge j},\quad \hat\Sigma'=P_{\ge j}\hat\Sigma P_{\ge j}.
 \end{equation}
Since $\Sigma'$ has trace $\operatorname{tr}_{\ge j}(\Sigma)$ and operator norm $\lambda_1'=\lambda_j$, \cite[Theorem 4]{KL14} applied to $\Delta'=\Sigma'-\hat\Sigma'$ gives
\begin{equation*}
\E\big[\norm{P_{\ge j}\Delta P_{\ge j}}_\infty\big]\le  C\max\bigg( \sqrt{\frac{\lambda_j\operatorname{tr}_{\ge j}(\Sigma) }{n}},\frac{\operatorname{tr}_{\ge j}(\Sigma)}{n}\bigg),
\end{equation*}
where $C$ is a constant depending only on $C_1$, and the claim follows.
\end{proof}

\begin{figure}[t]
\centering
\includegraphics[width=9.5cm]{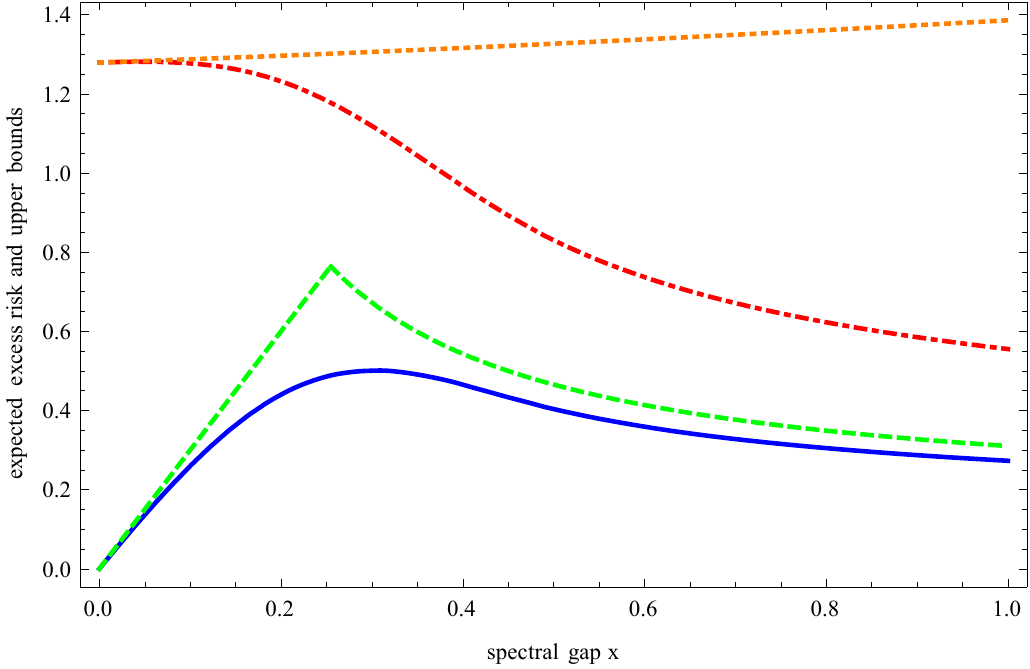}
\centering
 \caption{Expected excess risk (solid) and its upper bounds from \eqref{EqSCMAppli} (dashed), \eqref{EqBasicInequality} (dashed-dotted), and \eqref{EqExpGlobalB} (dotted) as functions of the spectral gap.}\label{Fig1}
\end{figure}

The bounds in Propositions \ref{BBZ} and \ref{GlobalBound} exhibit nicely the interplay between the global $n^{-1/2}$-rate and the local $n^{-1}$-rate. At first glance, it is surprising that the bounds derived via the basic ERM-inequality may nevertheless be suboptimal. For the simple isotropic case $\Sigma=\sigma^2I$ (enforcing a finite dimension $p$) with ${\mathcal E}^{PCA}_d=0$ they only provide an upper bound of order $d\sqrt{p/n}$. The reason is an asymmetry with the risk $\scapro{\hat\Sigma}{\hat P_{\le d}-P_{\le d}}$ with the population and empirical versions exchanged, which may be much larger than the excess risk.

For the lower bound model $\Sigma=I+xP_{\le d}$ with $n=1000$, $p=50$, and $d=3$, Figure \ref{Fig1} displays the expectation (obtained from accurate Monte Carlo simulations) of the upper bound from the basic inequality \eqref{EqBasicInequality} (dashed-dotted line) and the upper bound \eqref{EqExpGlobalB}, used for proving Proposition \ref{GlobalBound} (dotted line), compared to the expected excess risk (solid line). In addition, Figure \ref{Fig1} displays the upper bound obtained in \eqref{EqSCMAppli} with $C=1.1$, taking into account Remark \ref{RemConstants} (dashed line). This new upper  bound captures correctly the small excess risk for small spectral gaps $x$.

\subsection{New bounds for the excess risk}\label{SecNewBounds}
All results presented are proved in Section \ref{SecProofs} below.
The following representation of the excess risk is fundamental for the new bounds.
\begin{lemma}\label{SpectralDec} For any $\mu\in\R$ we have
\begin{align*}
{\mathcal E}^{PCA}_d=\sum_{j\le d}(\lambda_j-\mu)\| P_j\hat{P}_{> d}\|_2^2
+\sum_{k>d}(\mu-\lambda_{k})\Vert P_k\hat{P}_{\le d}\Vert_2^2.
\end{align*}
\end{lemma}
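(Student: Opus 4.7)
The plan is to start from the identity $\mathcal{E}^{PCA}_d=\langle \Sigma,P_{\le d}-\hat P_{\le d}\rangle$ given in \eqref{EqExcessRisk} and substitute the spectral decomposition $\Sigma=\sum_{j\ge 1}\lambda_j P_j$. Since $\{P_j\}$ is an orthogonal family of rank-one projectors summing to the identity, we have $\langle P_j,P_{\le d}\rangle=\mathbf{1}(j\le d)$ and $\langle P_j,\hat P_{\le d}\rangle+\langle P_j,\hat P_{>d}\rangle=\|P_j\|_2^2=1$. Plugging these in, the $j\le d$ terms contribute $\lambda_j\langle P_j,\hat P_{>d}\rangle$, while the $k>d$ terms contribute $-\lambda_k\langle P_k,\hat P_{\le d}\rangle$. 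This already proves the lemma in the special case $\mu=0$.

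To recover the full statement with arbitrary $\mu$, I would then show that the $\mu$-dependent correction
\[
\mu\Bigl(\sum_{j\le d}\langle P_j,\hat P_{>d}\rangle-\sum_{k>d}\langle P_k,\hat P_{\le d}\rangle\Bigr)=\mu\bigl(\langle P_{\le d},\hat P_{>d}\rangle-\langle P_{>d},\hat P_{\le d}\rangle\bigr)
\]
vanishes. The key step is the trace identity: using $\hat P_{>d}=I-\hat P_{\le d}$ and $P_{>d}=I-P_{\le d}$ together with $\operatorname{tr}(P_{\le d})=\operatorname{tr}(\hat P_{\le d})=d$, both inner products equal $d-\langle P_{\le d},\hat P_{\le d}\rangle$, so they cancel. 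Adding this zero quantity to the $\mu=0$ formula yields the claimed identity.

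The argument is entirely algebraic, so there is no real obstacle; the only thing to be careful about is handling potentially infinite-dimensional $\mathcal{H}$, which is harmless because $\hat P_{\le d}$ has finite rank $d$ and $\Sigma$ is trace-class under Assumption \ref{SubGauss}, so both sums converge absolutely and interchange with the Hilbert--Schmidt pairing without issue.
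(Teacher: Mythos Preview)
Your proposal is correct and follows essentially the same approach as the paper: both expand $\langle\Sigma,P_{\le d}-\hat P_{\le d}\rangle$ via the spectral decomposition to obtain the $\mu=0$ identity, and both establish $\langle P_{\le d},\hat P_{>d}\rangle=\langle P_{>d},\hat P_{\le d}\rangle$ to introduce the arbitrary $\mu$. The only cosmetic difference is that the paper phrases the latter identity via $\hat P_{>d}-P_{>d}=P_{\le d}-\hat P_{\le d}$, whereas you compute both sides directly as $d-\langle P_{\le d},\hat P_{\le d}\rangle$; these are equivalent one-line manipulations.
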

It turns out that the two risk parts exhibit a different behaviour and we shall bound them separately. Therefore, we introduce
\begin{equation*}
{\mathcal E}^{PCA}_{\le d}(\mu)=\sum_{j\le d}(\lambda_j-\mu)\| P_j\hat{P}_{> d}\|_2^2
,\quad
{\mathcal E}^{PCA}_{> d}(\mu)=\sum_{k>d}(\mu-\lambda_{k})\Vert P_k\hat{P}_{\le d}\Vert_2^2.
\end{equation*}
Usually, we shall choose $\mu\in[\lambda_{d+1},\lambda_d]$ such that all terms are positive, but sometimes it pays off to choose a different value. Our first main result is as follows.

\begin{proposition}\label{PropMainRes1} Grant Assumption \ref{SubGauss} and let $\mu\in [\lambda_{d+1},\lambda_d]$. Then for all $r=0,\dots,d$ we have
\begin{equation*}
\mathbb{E}[{\mathcal E}^{PCA}_{\le d}(\mu)]\le C\sum_{j\le r}(\lambda_j-\mu)\frac{\lambda_j\operatorname{tr}(\Sigma)}{n(\lambda_j-\lambda_{d+1})^2}+\sum_{j=r+1}^{d\wedge (r+p-d)}(\lambda_j-\mu)
\end{equation*}
with $C=8C_2+8C_3^2$, where $C_2$ and $C_3$ are given in \eqref{EqMomentConst} and \eqref{EqKolLou}, respectively. Moreover, if $d\le n/(16C_3^2)$, then for all $l=d+1,\dots, p+1$ we have
\begin{align*}
&\mathbb{E}[{\mathcal E}^{PCA}_{> d}(\mu)]\le C\sum_{k\ge l}(\mu-\lambda_{k})\frac{\lambda_k\operatorname{tr}(\Sigma)}{n(\lambda_d-\lambda_k)^2}+\sum_{k=(d+1)\vee (l-d)}^{l-1}(\mu-\lambda_k)+R
\end{align*}
with remainder term $R=(\mu-\lambda_p)e^{ -n/(32C_3^2)}$.  For $p=\infty$  we understand $\lambda_p=0$ and $l\in\{k\in\N\,|\,k\ge d+1\}\cup\{+\infty\}$ and for $l=p=\infty$ we understand $\sum_{k=(l-d)\vee(d+1)}^{l-1}(\mu-\lambda_k)=d\mu$.
\end{proposition}

Bounds of the same order can be derived for the $L^p$-norms of ${\mathcal E}^{PCA}_{\le d}(\mu)$ and ${\mathcal E}^{PCA}_{> d}(\mu)$ with a constant $C$ depending additionally on $p$, see e.g. Lemma \ref{lemma2mthm3} for the additional arguments needed in the case $p=2$. By simple arguments, we obtain the following corollary.

\begin{corollary}\label{MainRes1Var}  Grant Assumption \ref{SubGauss} and let $\mu\in [\lambda_{d+1},\lambda_d]$. Then we have
\[
\mathbb{E}[{\mathcal E}^{PCA}_{\le d}(\mu)]\le\sum_{j\le d} \min\bigg(C\frac{\lambda_j\operatorname{tr}(\Sigma)}{n(\lambda_j-\lambda_{d+1})},\lambda_j-\lambda_{d+1}\bigg).
\]
Moreover, if $d\le n/(16C_3^2)$, then
\[
\mathbb{E}[{\mathcal E}^{PCA}_{> d}(\mu)]\le \sum_{k>d}\min\bigg(C\frac{\lambda_k\operatorname{tr}(\Sigma)}{n(\lambda_d-\lambda_k)},\lambda_d-\lambda_k\bigg)+(\lambda_d-\lambda_p)e^{ -\frac{n}{32C_3^2}}.
\]
In both inequalities we have $C=8C_2+8C_3^2$.
\end{corollary}
Summing up the inequalities in Proposition \ref{PropMainRes1} leads to an upper bound for $\mathbb{E}[{\mathcal E}^{PCA}_{d}]$ which improves the local bound of Proposition \ref{BBZ} and gives the value $0$ in the isotropic case $\Sigma=\sigma^2 I$. Furthermore, global bounds emerge as trade-off between the two terms involved in the upper bounds. More precisely, we have:

\begin{thm}\label{ThmGlobalLocalBound} Grant Assumption \ref{SubGauss} and suppose $d\le n/(16C_3^2)$.  Then we have the local bound
\begin{equation*}
\mathbb{E}[{\mathcal E}^{PCA}_d]\le C\sum_{\substack{j\le d:\\\lambda_j>\lambda_{d+1}}}\frac{\lambda_j\operatorname{tr}(\Sigma)}{n(\lambda_j-\lambda_{d+1})}+C\sum_{\substack{k>d:\\ \lambda_k<\lambda_d}} \frac{\lambda_k\operatorname{tr}(\Sigma)}{n(\lambda_d-\lambda_k)}+(\lambda_d-\lambda_p)e^{ -\frac{n}{32C_3^2}}
\end{equation*}
and the global bound
\begin{equation}\label{EqGlBound}
\mathbb{E}[{\mathcal E}^{PCA}_d]\le \sum_{j\le d}\sqrt{\frac{C\lambda_j\operatorname{tr}(\Sigma)}{n}}+\sqrt{\frac{Cd\tr_{>d}(\Sigma) \operatorname{tr}(\Sigma)}{n}}.
\end{equation}
In both inequalities we have $C=8C_2+8C_3^2$.
\end{thm}

For our second main result we impose additional eigenvalue conditions and thus improve the first bound of Proposition \ref{PropMainRes1}. A main feature is that the full trace of $\Sigma$ can be replaced by the partial trace $\tr_{>s}(\Sigma)$, which in the case $s=d$ coincides with the oracle reconstruction error.

\begin{proposition}\label{MainRes2}  Grant Assumption \ref{SubGauss}. Then for all indices $s=1,\ldots, d$ such that
\begin{equation}\label{EqThm3Cond}
\frac{\lambda_{s}}{\lambda_{s}-\lambda_{d+1}} \sum_{j\le s}\frac{\lambda_j}{\lambda_j-\lambda_{d+1}} \le n/(16C_3^2)
\end{equation}
and all $r=0,\dots,s$, we have
\begin{equation*}
\mathbb{E}[{\mathcal E}^{PCA}_{\le d}(\lambda_{d+1})]\le C\sum_{j\le r}\frac{\lambda_j\tr_{> s}(\Sigma)}{n(\lambda_j-\lambda_{d+1})}+2\sum_{r<j\le d}(\lambda_j-\lambda_{d+1})+R
\end{equation*}
with $C=16C_2+8C_3^2$ and remainder term given by
\[
R=1024C_1\sum_{j\le r}\frac{\lambda_j\operatorname{tr}(\Sigma)}{n(\lambda_j-\lambda_{d+1})}e^{-\frac{n(\lambda_{s}-\lambda_{d+1})^2}{(4C_3\lambda_{s})^2}}.
\]
\end{proposition}

In the special case $\lambda_{d+1}=\dots=\lambda_{p}$, compare the spiked covariance model below, we have ${\mathcal E}^{PCA}_{> d}(\lambda_{d+1})=0$ and thus Proposition \ref{MainRes2} yields an upper bound for the whole excess risk. In the general case, we still have the following consequence:

\begin{thm}\label{ThmLocalGlobalImpr}
Grant Assumption \ref{SubGauss} and suppose  $ \lambda_d-\lambda_{d+1}\ge c_1(\lambda_d-\lambda_p)$ with $c_1>0$.
If \eqref{EqThm3Cond} holds with $s=d$, then we have the local bound
\begin{equation*}\label{CorThm3Res}
\mathbb{E}[ {\mathcal E}^{PCA}_d]
\le \frac{C}{c_1n}\Big(\tr_{> d}(\Sigma)+\operatorname{tr}(\Sigma)e^{-c_1^2n(\lambda_{d}-\lambda_{p})^2/(C\lambda^2_{d})}\Big)\sum_{j\le d}\frac{\lambda_j}{\lambda_j-\lambda_{d+1}}.
\end{equation*}
Moreover, if $s\le d$ is the largest number such that \eqref{EqThm3Cond} is satisfied (and $s=0$ if such a number does not exist), then we have the global bound
\[
\mathbb{E}[ {\mathcal E}^{PCA}_d]\le \frac{C}{c_1\sqrt{n}}\bigg(\sqrt{\tr_{> s}(\Sigma)}+\sqrt{\tr(\Sigma)}e^{-c_1^2n(\lambda_{s}-\lambda_{p})^2/(C\lambda^2_{s})}\bigg)\sum_{j\le d}\sqrt{\lambda_j}.
\]
In both inequalities $C$ is a constant depending only on $C_1$.
\end{thm}

Finally, observe that upper bounds for the expectation of the excess risk $\E[{\mathcal E}^{PCA}_d]\le r.h.s.$ can be equivalently formulated as exact oracle inequalities $\E[R(\hat P_{\le d})]\le \min_{P\in\mathcal{P}_d}R(P)+r.h.s.$ If we give up the constant $1$ in front of the minimum, Proposition \ref{MainRes2} also leads to a third type of bound.

\begin{thm}\label{ThmRelative}
Grant Assumption \ref{SubGauss}. Then for all indices $s=1,\ldots, d$ such that \eqref{EqThm3Cond} holds, we have
\[
\mathbb{E}[R(\hat{P}_{\le d})]\le C\tr_{> s}(\Sigma)+C\tr(\Sigma)e^{-n(\lambda_{s}-\lambda_{d+1})^2/(4C_3\lambda_{s})^2}
\]
with a constant $C>0$ depending only on $C_1$.
\end{thm}
If \eqref{EqThm3Cond} holds with $s=d$, then $\tr_{>d}(\Sigma)=\inf_{P\in\mathcal{P}_d}R(P)$ and we obtain a standard oracle inequality with an exponentially small remainder term.

\subsection{Applications}\label{SecExa}

Let us illustrate our different upper bounds for three main classes of eigenvalue behaviour: exponential decay, polynomial decay, and a simple spiked covariance model. Eigenvalue structures such as exponential or polynomial decay are typically considered in the context of functional data, see e.g. \cite{HH09,MR15,M15}, spiked covariance models are often studied in the context of high-dimensional data \cite{JL09,CMW13,VL13}.

\subsubsection*{Exponential decay}\label{ExExpDecay}
Assume for some $\alpha>0$
\begin{equation}\label{EqExpDecay}
\lambda_j=e^{-\alpha j},\ \ \ j\ge 1.
\end{equation}
Then we have $ \lambda_j-\lambda_{d+1}\ge (1-e^{-\alpha})\lambda_j$ for every $j\le d$ and \eqref{EqERRed1} below gives ${\mathcal E}^{PCA}_d\le (1-e^{-\alpha})^{-1}{\mathcal E}^{PCA}_{\le d}(\lambda_{d+1})$. Hence, Corollary \ref{MainRes1Var} implies
\begin{equation*}
\mathbb{E}[ {\mathcal E}^{PCA}_d]\le C\sum_{j\le d}\min\left(1/n,e^{-\alpha j}\right)\le C \frac{d\wedge\log (en)}{n},
\end{equation*}
where $C$ (not the same at each occurrence) is a constant depending only on $C_1$ and $\alpha$. This bound improves the local bound in Proposition \ref{BBZ} (which gives $Ce^{\alpha d}/n$) and the bounds in Theorems 3.2 and 3.4 of \cite{BBZ07}, respectively.

Next, we show that this result can be much improved by applying the local bound in Theorem \ref{ThmLocalGlobalImpr}. Indeed, the left-hand side of  \eqref{EqThm3Cond} with $s=d$ can be bounded by $d(1-e^{-\alpha})^{-2}$.
Thus, assuming that this value is smaller than $n/(16C_3^2)$, we can apply the local bound in Theorem \ref{ThmLocalGlobalImpr}. The main term  is bounded by $
C(1-e^{-\alpha})^{-3}dn^{-1} e^{-\alpha (d+1)}$ and the remainder term by $1024C_1(1-e^{-\alpha})^{-2}n^{-1}\exp(-n(1-e^{-\alpha})^2/(16C_3^2))$. We conclude that there are constants $c,C>0$ depending only on $C_1$ and $\alpha$ such that
\begin{equation}\label{EqERBoundED}
\mathbb{E}[{\mathcal E}^{PCA}_d]
\le C\frac{d e^{-\alpha d}}{n},
\end{equation}
provided that $d\le cn$. Noting for the population reconstruction error
\[
R(P_{\le d})=\sum_{k>d}e^{-\alpha k}=e^{-\alpha}(1-e^{-\alpha})^{-1}e^{-\alpha d},
\]
we see that the excess risk is smaller than the oracle risk, provided that $d\le cn$. In the Appendix \ref{SecLinExp}, we derive linear expansions for the excess risk, implying that \eqref{EqERBoundED} is indeed sharp. In fact, \eqref{EqEROptBoundED} says that for $X$ Gaussian, there are constants $c,C>0$ depending only on $\alpha$ such that $\mathbb{E}[{\mathcal E}^{PCA}_d] \ge C^{-1}d e^{-\alpha d}n^{-1}$, provided that $d\le cn$.

\subsubsection*{Polynomial decay}\label{ExaPolDecay}
Assume for some $\alpha>1$
\begin{equation}\label{EqPolDecay}
\lambda_j=j^{-\alpha},\ \ \ j\ge 1.
\end{equation}
Then the local bound in Theorem \ref{ThmGlobalLocalBound} and the inequalities
\begin{equation}\label{EqEVEPD}
\sum_{j\le d}\frac{\lambda_j}{\lambda_j-\lambda_{d+1}}\le Cd\log(ed),\quad\sum_{k>d}\frac{\lambda_k}{\lambda_d-\lambda_k}\le Cd\log(ed)
\end{equation}
from \eqref{EqEVExp1} yield that there are constants $c,C>0$ depending only on $C_1$ and $\alpha$ such that
\[
\mathbb{E}[{\mathcal E}^{PCA}_d]\le C\frac{d\log (ed)}{n}
\]
for all $d\le cn$. This already improves the results obtained in \cite[Section 5]{BBZ07}, where a rate strictly between $n^{-1/2}$ and $n^{-1}$ is derived.

 Again, for large $d$, this result can be much improved by using Theorems~\ref{ThmLocalGlobalImpr} and \ref{ThmRelative}.
Choosing $s=\lfloor d/2\rfloor$, there is a constant $c$ depending only on  $C_1$ and $\alpha$ such that Condition \eqref{EqThm3Cond} is satisfied if $d\le cn$. Thus, Theorem \ref{ThmRelative} yields
\begin{equation}\label{EqOracleBoundPolyn}
\mathbb{E}[{\mathcal E}^{PCA}]\le C\tr_{>\lfloor d/2\rfloor}(\Sigma)+Ce^{-n/C}\le Cd^{1-\alpha}+Ce^{-n/C},
\end{equation}
provided that $d\le cn$.
Noting for the population reconstruction error
\[
R(P_{\le d})=\sum_{k>d}k^{-\alpha}\ge cd^{1-\alpha},
\]
we see from \eqref{EqOracleBoundPolyn} that for $d\le cn$ the excess risk is always smaller than a constant times the oracle risk.

Similarly, Proposition \ref{MainRes2} (applied with $r=s=d$), Theorem \ref{ThmLocalGlobalImpr}, and \eqref{EqEVEPD} yield
\[
\mathbb{E}[{\mathcal E}^{PCA}_{\le d}(\lambda_{d+1})]\le  C\frac{d^{2-\alpha}\log (ed)}{n},\quad \mathbb{E}[{\mathcal E}^{PCA}_{d}]\le C\frac{d^{3-\alpha}\log (ed)}{n},
\]
provided that $d^2\log (ed)\le cn$, where $c,C>0$ are constants depending only on $C_1$ and $\alpha$. In Appendix \ref{AppAppli}, we show that the first inequality also holds without the $\log (ed)$ term, and that the second inequality can be improved to the sharp bound $Cd^{2-\alpha}n^{-1}$, yet under a more restrictive condition on $d$. This leads to the conjecture that for the excess risk the bound $Cd^{2-\alpha}n^{-1}$ holds in the larger regime $d^2\log (ed)\le cn$.

\subsubsection*{Spiked covariance model}\label{ExaTwoGroups}
Let $\Theta$ be the class of all symmetric matrices whose eigenvalues satisfy
\begin{equation}\label{EqTwoGroups}
1+\kappa x\ge \lambda_1\ge \dotsc\ge \lambda_d\ge 1+ x\text{ and }\lambda_{d+1}=\dotsc=\lambda_p=1,
\end{equation}
where $x\ge 0$ and $\kappa>1$. Then it holds
\begin{equation}\label{EqSCMAppli}
\sup_{\Sigma\in\Theta}\mathbb{E}[ {\mathcal E}^{PCA}_d]
\le \min\bigg( C\kappa\frac{(1+ \kappa x)d(p-d)}{nx},d \kappa x,(p-d)\kappa x\bigg)+\kappa xe^{ -\frac{n}{32C_3^2}},
\end{equation}
%and
%\begin{align}
%&\sup_{\Sigma\in\Theta}\mathbb{E}[ \Vert P_{\le d}-\hat{P}_{\le d}\Vert_2^2]\nonumber\\&
%\le \min\bigg( C\frac{d(\sigma^2+ \kappa x)(p-d)\sigma^2}{nx^2},2d,2(p-d)\bigg)+e^{ -\frac{n}{32C_3^2}}%\label{EqSCMAppliSinTh},
%\end{align}
provided that $d\le cn$, where $c,C>0$ are constants depending only on $C_1$. Considering separately the cases $x\le c$ and $x> c$, we see that the excess risk is always smaller than the oracle risk $R(P_{\le d})=p-d$. To prove \eqref{EqSCMAppli}, it suffices to apply Proposition \ref{PropMainRes1}. Indeed, the claim follows from applying either Lemma \ref{SpectralDec} with $\mu=1$ and the first inequality in Proposition \ref{PropMainRes1} or Lemma \ref{SpectralDec} with $\mu=1+\kappa x$ and the second inequality in Proposition \ref{PropMainRes1} (depending on whether $(1+\kappa x)d\le p-d$ or $(1+\kappa x)d>p-d$).

In fact, since
\begin{equation*}
x\|P_{\le d}-\hat{P}_{\le d}\|_2^2\le 2 {\mathcal E}^{PCA}_d\le \kappa x\|P_{\le d}-\hat{P}_{\le d}\|_2^2,
\end{equation*}
\eqref{EqSCMAppli} is equivalent to a result by Cai, Ma, and Wu \cite[Theorem 9]{CMW13}. Moreover, their minimax lower bound \cite[Theorem 8]{CMW13} (see also Vu and Lei \cite[Theorem~A.2]{VL13}) gives
\begin{equation}\label{EqLowerBound}
\inf_{\hat{P}_{\le d}}\sup_{\Sigma\in \Theta}\mathbb{E}[ \langle\Sigma,P_{\le d}-\hat{P}_{\le d}\rangle]\ge c\min\Big( \frac{(1+x)d(p-d)}{nx},dx,(p-d)x\Big),
\end{equation}
where the infimum is taken over all estimators $\hat{P}_{\le d}$ based on $X_1,\ldots,X_n$ with values in ${\mathcal P}_{d}$ and $c>0$ is a constant.

\subsubsection*{Oracle inequality}
One interesting conclusion in the above typical situations is a nonasymptotic bound by the oracle risk, more precisely:
\begin{corollary}
In the cases \eqref{EqExpDecay}, \eqref{EqPolDecay} and \eqref{EqTwoGroups}, there are constants $c,C>0$ depending only on $C_1$, $\alpha$, and $\kappa$ such that the oracle inequality
\[
\mathbb{E}[R(\hat{P}_{\le d})]\le C\cdot R(P_{\le d}),
\]
holds for all $d\le cn$.
\end{corollary}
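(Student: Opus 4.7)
The overall strategy is to use the identity
\[
\mathbb{E}[R(\hat P_{\le d})] = R(P_{\le d}) + \mathbb{E}[{\mathcal E}^{PCA}_d],
\]
so that the oracle inequality reduces to establishing a bound of the form $\mathbb{E}[{\mathcal E}^{PCA}_d] \le C\cdot R(P_{\le d})$ in each of the three regimes. In every case I would substitute the case-specific bound on the expected excess risk already derived in Section~\ref{SecExa} together with the explicit expression for the oracle reconstruction error $R(P_{\le d})$, and then verify that the resulting ratio is uniformly bounded on the range $d\le cn$.

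For exponential decay \eqref{EqExpDecay} the input is $R(P_{\le d}) = e^{-\alpha}(1-e^{-\alpha})^{-1}e^{-\alpha d}$ together with the local bound $\mathbb{E}[{\mathcal E}^{PCA}_d]\le Cde^{-\alpha d}/n$ from Section~\ref{SecExa} (via Corollary~\ref{Thm3Cor}); the ratio is of order $d/n$, hence uniformly bounded on $d\le cn$ by a constant depending only on $\alpha$. For polynomial decay \eqref{EqPolDecay} I would appeal to \eqref{EqOracleBoundPolyn}, i.e.\ $\mathbb{E}[R(\hat P_{\le d})]\le Cd^{-(\alpha-1)}+Ce^{-n/C}$, together with $R(P_{\le d})\ge c(\alpha)d^{-(\alpha-1)}$. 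The main term produces a ratio of order $1$ directly; for the remainder, the key observation is that $d^{\alpha-1}e^{-n/C}\le (cn)^{\alpha-1}e^{-n/C}$ on $d\le cn$, and the function $n^{\alpha-1}e^{-n/C}$ attains a finite maximum over $n\ge 1$ depending only on $\alpha$.

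For the spiked covariance model \eqref{EqTwoGroups}, $R(P_{\le d})=p-d\ge 1$, and the input is \eqref{EqSCMAppli}. I would split on the size of the gap $x$: for $x\le 1$ the third term of the minimum, $(p-d)\kappa x$, and the exponential remainder $\kappa xe^{-n/(32C_3^2)}$ are each dominated by $\kappa(p-d)$; for $x>1$ the first term of the minimum reduces, via $(1+\kappa x)/x\le 1+\kappa$, to $C\kappa(1+\kappa)d(p-d)/n\le C'(p-d)$ once $d\le cn$ with $c$ depending on $\kappa$. The main obstacle lies in controlling the exponential remainder when $x>1$, since the naive prefactor $\kappa x$ is linear in $x$. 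To close this gap I would refine via Corollary~\ref{Thm3CorRelative} with $s=d$, whose exponent $-n(\lambda_d-1)^2/(C\lambda_d^2)$ is bounded above by $-n/(C(1+\kappa)^2)$ for $x\ge 1$, and then invoke $d\le cn$ (with $c$ depending on $\kappa$) to absorb the leading factor involving $\tr(\Sigma)$ into a bounded multiple of $p-d$.
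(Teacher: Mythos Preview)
Your strategy coincides with the paper's: the corollary is presented there without a separate proof, as a summary of the three case-by-case discussions in Section~\ref{SecExa}, and you reconstruct precisely those arguments. For exponential and polynomial decay your reasoning matches the paper line for line.

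For the spiked covariance model you go slightly beyond the paper by isolating the exponential remainder $\kappa x\,e^{-n/(32C_3^2)}$ as a potential obstacle when $x>1$. Your proposed remedy via Corollary~\ref{Thm3CorRelative} with $s=d$, however, does not eliminate the difficulty: the remainder there is $C\tr(\Sigma)\exp(-n(\lambda_d-1)^2/(C\lambda_d^2))$, and since $\tr(\Sigma)\le d(1+\kappa x)+(p-d)$, the contribution $d(1+\kappa x)e^{-n/(C(1+\kappa)^2)}$ still carries a factor linear in $x$ that cannot be absorbed into a bounded multiple of $p-d$ uniformly in $x$ (consider $p-d=1$, $d\asymp cn$, $x\to\infty$). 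The paper's own sentence ``considering separately the cases $x\le c$ and $x>c$, we see that the excess risk is always smaller than the oracle risk'' is equally informal and does not engage with this remainder either, so your treatment is at the same level of rigor as the paper's on this point.
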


\subsection{Discussion}\label{SecDisc}
Let us review some connections and implications.

\subsubsection*{Subspace distance versus excess risk}\label{SecHSdistance}
Many results cover the Hilbert-Schmidt distance $\norm{\hat P_{\le d}-P_{\le d}}_2$, which has a geometric interpretation in terms of canonical angles. In this direction, the most well-known bound is the Davis-Kahan sin $\Theta$ theorem, see e.g. Yu, Wang and Samworth \cite{YWS15} for a recent statistical account. More accurate bounds are derived e.g. in Mas and Ruymgaart \cite{MR15} in a functional setting and in Vu and Lei \cite{VL13} and Cai, Ma, and Wu \cite{CMW13} in a high-dimensional sparse setting.

The squared Hilbert-Schmidt distance can be written as
\begin{equation}\label{EqHSDistanceDec}
\norm{\hat P_{\le d}-P_{\le d}}_2^2=2\sum_{j\le d}\| P_j\hat{P}_{> d}\|_2^2=2\sum_{k>d}\Vert P_k\hat{P}_{\le d}\Vert_2^2,
\end{equation}
see e.g. the proof of Lemma \ref{SpectralDec}. Compared to
\begin{align*}
{\mathcal E}^{PCA}_d=\sum_{j\le d}(\lambda_j-\lambda_{d+1})\| P_j\hat{P}_{> d}\|_2^2
+\sum_{k>d}(\lambda_{d+1}-\lambda_{k})\Vert P_k\hat{P}_{\le d}\Vert_2^2
\end{align*}
from Lemma \ref{SpectralDec}, we see that the squared Hilbert-Schmidt distance and the excess risk differ in the weighting of the projector norms. In fact, we obtain
\begin{equation}\label{EqERvsHS}
\frac{2{\mathcal E}^{PCA}_{\le d}(\lambda_{d+1})}{\lambda_1-\lambda_{d+1}}\le \norm{\hat P_{\le d}-P_{\le d}}_2^2\le \frac{2{\mathcal E}^{PCA}_{\le d}(\lambda_{d+1})}{\lambda_d-\lambda_{d+1}}\le \frac{2{\mathcal E}^{PCA}_d}{\lambda_d-\lambda_{d+1}}.
\end{equation}
This means that all excess risk bounds a fortiori imply bounds on the Hilbert-Schmidt distance up to a spectral gap factor. For instance, in our setting, \eqref{EqERvsHS} implies most versions of the Davis-Kahan $\sin \Theta$ theorem, e.g. those in \cite{YWS15}, by using the basic inequality $\scapro{\Delta}{ P_{\le d}-\hat P_{\le d}}\ge {\mathcal E}^{PCA}_d$ in \eqref{EqBasicInequality} and bounding the scalar product by a Cauchy-Schwarz or operator norm inequality. In contrast, the first inequality in \eqref{EqERvsHS} does not lead to good bounds for the excess risk when $\lambda_d-\lambda_{d+1}$ is small relative to $\lambda_1-\lambda_{d+1}$. In the extreme case $\lambda_d=\lambda_{d+1}$ the Hilbert-Schmidt distance depends on the choice of $(u_d,u_{d+1})$ and is thus not even well-defined. A more sophisticated version of \eqref{EqERvsHS} is derived in Appendix \ref{SecLinExp}.

Finally, note that the Hilbert-Schmidt distance and the excess risk have different applications. For instance, bounds for the Hilbert-Schmidt distance $\|\hat P_j- P_j\|_2$ are fundamental in the analysis of several testing algorithms, see e.g. Horv\'{a}th and Kokoszka \cite{MR2920735}. On the other hand, the excess risk is more adequate for tasks like reconstruction and prediction, see e.g. Wahl \cite{W18} for the case of the prediction error of principal component regression.

\subsubsection*{Asymptotic versus non-asymptotic} For the Hilbert-Schmidt distance it is known that for $\mathcal H=\R^p$ and $X\sim N(0,\Sigma)$ with fixed $\Sigma$
in the case $\lambda_d>\lambda_{d+1}$
\begin{equation}\label{EqAsympSinTheta}
n\|\hat{P}_{\le d}-P_{\le d}\|_2^2\xrightarrow{d}2\sum_{j\le d,k>d}\frac{\lambda_j\lambda_k}{(\lambda_j-\lambda_k)^2}g_{jk}^2
\end{equation}
holds as $n\to\infty$, where $(g_{jk})_{j\le d<k}$ is an array of independent standard Gaussian random variables, see e.g. Dauxois, Pousse and Romain \cite{DPR82} and also Koltchinskii and Lounici \cite{KL14,KL15a}. The projector calculus developed in Section \ref{SecProj} allows to obtain readily the analogue of the asymptotic result \eqref{EqAsympSinTheta} for the excess risk ${\mathcal E}^{PCA}_d$ without any spectral gap condition. More precisely, we prove in Appendix \ref{ProofPropAsympt}:

\begin{proposition}\label{PropAsympt}
Let ${\mathcal H}=\R^p$ and $X\sim N(0,\Sigma)$ with $\Sigma$ fixed. As $n\to\infty$ we have for the excess risk ${\mathcal E}^{PCA}_{d,n}={\mathcal E}^{PCA}_d$
\[
n{\mathcal E}^{PCA}_{d,n}\xrightarrow{d}\sum_{\substack{j\le d, k>d:\\ \lambda_j>\lambda_k}} \frac{\lambda_j\lambda_k}{\lambda_j-\lambda_k}g_{jk}^2,
\]
where $(g_{jk})_{j\le d<k}$ are independent standard Gaussian random variables.
\end{proposition}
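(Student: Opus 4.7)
The plan is to combine Lemma \ref{SpectralDec} with a second-order perturbation expansion of the empirical spectral projectors for the ``clean'' eigenvalue groups, together with a sub-block analysis for the (possibly degenerate) eigenvalue block of $\Sigma$ straddling the index $d$.

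First, group by distinct eigenvalue: let $\nu_1>\cdots>\nu_s$ be the distinct eigenvalues of $\Sigma$ with multiplicities $m_i$ and spectral projectors $Q_i$, and set $M_l=\sum_{i\le l}m_i$. Let $t$ be the unique index with $M_{t-1}\le d\le M_t$ and write $r_t=d-M_{t-1}$. Applying Lemma \ref{SpectralDec} with $\mu=\nu_t$ and regrouping, all prefactors in front of indices inside $Q_t$ vanish, leaving
\[
{\mathcal E}^{PCA}_d=\sum_{i<t}(\nu_i-\nu_t)\langle Q_i,\hat P_{>d}\rangle+\sum_{a>t}(\nu_t-\nu_a)\langle Q_a,\hat P_{\le d}\rangle.
\]
For $i\neq t$ the group $Q_i$ has a strictly positive spectral gap, so a resolvent/contour-integral expansion of $\hat Q_j$ yields
\[
\langle Q_i,\hat Q_j\rangle=\delta_{ij}m_i+(1-\delta_{ij})\frac{\|Q_i\Delta Q_j\|_2^2}{(\nu_i-\nu_j)^2}-\delta_{ij}\sum_{j'\neq i}\frac{\|Q_i\Delta Q_{j'}\|_2^2}{(\nu_i-\nu_{j'})^2}+o_p(1/n),
\]
where the linear term vanishes because $\tr(Q_i\Delta Q_j)=\tr(Q_jQ_i\Delta)=0$ for $i\neq j$. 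Writing $\hat P_{\le d}=\sum_{i<t}\hat Q_i+\hat R$ with $\hat R\le\hat Q_t$ a rank-$r_t$ projector and summing group by group gives, for $i<t$,
\[
\langle Q_i,\hat P_{>d}\rangle=\sum_{a\ge t}\frac{\|Q_i\Delta Q_a\|_2^2}{(\nu_i-\nu_a)^2}-\langle Q_i,\hat R\rangle+o_p(1/n),
\]
and an analogous formula for $\langle Q_a,\hat P_{\le d}\rangle$ when $a>t$.

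The hard step is controlling $\hat R$ inside the degenerate block. Writing $\hat Q_t=Q_t+E_t$ with $E_t$ the leading purely off-diagonal perturbation between $Q_t$ and its complement, one verifies $\hat Q_t\Sigma\hat Q_t-\nu_t\hat Q_t=E_t\Sigma E_t=O_p(1/n)$, so that
\[
\hat Q_t(\hat\Sigma-\nu_t I)\hat Q_t=-Q_t\Delta Q_t+O_p(1/n).
\]
Hence $\hat R$ is, to leading order, the top rank-$r_t$ spectral projector of $-Q_t\Delta Q_t$ inside $Q_t\mathcal{H}$, a measurable function of the intra-block Gaussian matrix $Q_t\Delta Q_t$ alone. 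Combined with $Q_i\hat R=(\nu_t-\nu_i)^{-1}Q_i\Delta Q_t\hat R+o_p(n^{-1/2})$, this gives $\langle Q_i,\hat R\rangle=(\nu_t-\nu_i)^{-2}\|Q_i\Delta Q_t\hat R\|_2^2+o_p(1/n)$.

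Finally, for $X\sim N(0,\Sigma)$ the CLT yields $\sqrt n\,\Delta\xrightarrow{d}W$, a symmetric Gaussian operator whose blocks $Q_i\Delta Q_a$ have, in the $\Sigma$-eigenbasis, iid $N(0,\nu_i\nu_a)$ entries for $i\neq a$, independent across distinct block positions and independent of the diagonal blocks. Thus $\hat R$ is asymptotically independent of every off-diagonal block; conditioning on its random $r_t$-dimensional range and using orthogonal invariance of iid Gaussians gives $n\|Q_i\Delta Q_t\hat R\|_2^2\xrightarrow{d}\nu_i\nu_t\chi^2_{r_tm_i}$ together with $n\|Q_i\Delta Q_t(I-\hat R)\|_2^2\xrightarrow{d}\nu_i\nu_t\chi^2_{(M_t-d)m_i}$ as an independent summand. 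Assembling: the pure cross-block terms give $\frac{\nu_i\nu_a}{\nu_i-\nu_a}\chi^2_{m_im_a}$ for $i<t<a$; the $(i,t)$ contribution together with its $\hat R$-correction collapses to $\frac{\nu_i\nu_t}{\nu_i-\nu_t}\chi^2_{(M_t-d)m_i}$; and the $(t,a)$ contribution, coming entirely from the $\hat R$-correction, equals $\frac{\nu_t\nu_a}{\nu_t-\nu_a}\chi^2_{r_tm_a}$. Reindexing these weighted chi-squares as sums of independent $g_{jk}^2$'s over pairs $(j,k)$ with $j\le d<k$ and $\lambda_j>\lambda_k$ completes the proof. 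The main obstacle is precisely the middle-block analysis: without a spectral gap, classical perturbation theory fails for $\hat R$, and everything hinges on showing that it depends on $\Delta$ only through the intra-block piece $Q_t\Delta Q_t$ up to $o_p(n^{-1/2})$ terms.
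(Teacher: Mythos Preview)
Your proof is correct and shares the essential skeleton with the paper's: both start from Lemma~\ref{SpectralDec} with $\mu=\lambda_d$ (your $\nu_t$), both recognize that the sub-block projector $\hat R$ depends asymptotically only on the diagonal block $Q_t\Delta Q_t$ (this is precisely \cite[Section~2.2.1]{DPR82}, which the paper cites rather than re-derives), and both exploit the independence of diagonal and off-diagonal Gaussian blocks together with orthogonal invariance to arrive at the chi-square limit.

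The genuine difference is in packaging. Where you use a second-order resolvent expansion for the non-degenerate groups $\hat Q_i$ plus a separate perturbation argument for $\hat R$, the paper applies the \emph{exact} identity of Lemma~\ref{FirstOrdExp}, $\langle P_j,\hat P_k\rangle=\|P_j\Delta\hat P_k\|_2^2/(\lambda_j-\hat\lambda_k)^2$, to write ${\mathcal E}^{PCA}_d$ directly as a continuous functional of $(\sqrt n\,\Delta,\hat P_1,\dots,\hat P_p)$ with no $o_p$ bookkeeping whatsoever. It then proves a single joint-convergence lemma, $(\sqrt n\,\Delta,\hat P_1,\dots,\hat P_p)\xrightarrow{d}(L,P_1^{Haar},\dots,P_p^{Haar})$, where the $P_j^{Haar}$ for $j$ in the degenerate block are the spectral projectors of the limiting GOE block $Q_tLQ_t$, and finishes in one stroke by continuous mapping. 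Your route makes the perturbation-theoretic mechanics explicit and is perhaps more instructive; the paper's buys economy by replacing approximate expansions with an exact projector identity and by isolating all the probabilistic content into one Slutsky-plus-continuous-mapping step. One point worth tightening in your write-up: the inference ``$\hat Q_t(\hat\Sigma-\nu_t I)\hat Q_t=-Q_t\Delta Q_t+O_p(1/n)$ hence $\hat R$ is the top-$r_t$ projector of $-Q_t\Delta Q_t$'' really goes through the $\sqrt n$-rescaled statement plus continuous mapping (the eigenvalue gaps of $-Q_t\Delta Q_t$ are only $O_p(n^{-1/2})$), which is exactly how the paper's lemma handles it.
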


We see that the excess risk converges with $n^{-1}$-rate also in the case~$\lambda_d=\lambda_{d+1}$. Note, however, that the convergence cannot be uniform in the parameter~$\Sigma$ in view of the discontinuity of the right-hand side in $(\lambda_j)$. This clearly underpins the need for non-asymptotic upper bounds for the excess risk.

In certain examples, including the spiked covariance model and exponential decay of eigenvalues, the eigenvalue expression in Proposition \ref{MainRes2} (with $r=s=d$) coincides with the one in Proposition \ref{PropAsympt}. In the general case, including polynomial decay, the eigenvalue expressions differ. In Appendix \ref{SecLinExp} we derive non-asymptotic bounds which give the asymptotic leading terms in \eqref{EqAsympSinTheta} and Proposition \ref{PropAsympt}, by using linear expansions for $\hat P_{\le d}$ and $\hat{P}_{> d}$. These bounds, however, require stronger eigenvalue conditions (including $\lambda_d>\lambda_{d+1}$). In contrast, our main results in Section \ref{SecNewBounds} also apply to the case of small or vanishing spectral gaps.

\subsubsection*{Eigenvalue concentration} We obtain deviation inequalities for empirical eigenvalues which are of independent interest. Concentration inequalities for eigenvalues using tools from measure concentration are widespread, see e.g. \cite{L01,M04,L07,AGZ10,BLM13,BDR15}. The main difference to our deviation inequalities is that we take into account the local eigenvalue structure. For instance, from Propositions \ref{RightDev} and \ref{LeftDev}, we get the following theorem:
\begin{thm}\label{ThmEvDev} Grant Assumption \ref{SubGauss}. Then there is a constant $c>0$ depending only on $C_1$ such that for all $y>0$ satisfying
\[
\frac{1}{n(y\wedge 1)}\sum_{k> d}\frac{\lambda_k}{\lambda_d-\lambda_k+y\lambda_d}\le 1/(2C_3^2)
\]
we have
\begin{equation*}
\mathbb{P}\big(  \hat{\lambda}_{d}-\lambda_d> y\lambda_d\big)\le e^{ 1-cn (y\wedge y^2) }.
\end{equation*}
Moreover,  for all $y>0$ satisfying
\begin{equation*}
\frac{1}{n(y\wedge 1)}\sum_{j< d}\frac{\lambda_j}{\lambda_j-\lambda_d+y\lambda_d}\le 1/(2C_3^2)
\end{equation*}
we have
\begin{equation*}
\mathbb{P}\big(  \hat{\lambda}_{d}-\lambda_d< - y\lambda_d\big)\le e^{ 1-cn (y\wedge y^2) }.
\end{equation*}
\end{thm}

If $\lambda_{d}$ is a simple eigenvalue, then Theorem \ref{ThmEvDev} can be seen as a non-asymptotic version of the classical central limit theorem $\sqrt{n}(\hat{\lambda}_{d}/\lambda_{d}-1)\rightarrow\mathcal{N}(0,2)$ which holds for $X$ Gaussian, compare Anderson \cite[Theorem 13.5.1]{And} and Dauxois, Pousse and Romain \cite[Proposition 8]{DPR82}. Moreover, the conditions imposed are related to $\mathbb{E}[ \hat{\lambda}_{d}]$ by the following asymptotic expansion (see e.g. \cite[Equation (2.22)]{N08})
\[
\mathbb{E}[ \hat{\lambda}_{d}/\lambda_{d}]-1=\frac{1}{n}\sum_{k\neq d}\frac{\lambda_k}{\lambda_{d}-\lambda_{k}}+\dots.
\]
A discussion how the eigenvalue conditions in Theorem \ref{ThmEvDev} improve upon standard conditions from the literature is given in Remark  \ref{RemRelativeEVCond}. 
%Indeed, the fact that the excess risk can be small in the case of small or vanishing spectral gaps forced us to derive concentration inequalities for empirical eigenvalues under assumptions which are as weak as possible.

\section{Main tools}\label{SecMainTools}

\subsection{Projector-based calculus}\label{SecProj}

In this section, we present two perturbation formulas, which  together with the representation of the excess risk given in Lemma \ref{SpectralDec} form the basis of our analysis of the excess risk.

\begin{lemma}\label{FirstOrdExp}
For $j\le d$ we have
\begin{equation*}
\Vert P_{j}\hat{P}_{> d}\Vert_2^2 =\sum_{k>d}\frac{\Vert P_j\Delta\hat{P}_k\Vert_2^2}{(\lambda_j-\hat{\lambda}_k)^2},
\end{equation*}
and for $k>d$ we have
\begin{equation*}
 \Vert P_{k}\hat{P}_{\le d}\Vert_2^2 =\sum_{j\le d}\frac{\Vert P_k\Delta\hat{P}_j\Vert_2^2}{(\hat{\lambda}_j-\lambda_k)^2}.
\end{equation*}
Both identities hold provided that all denominators are non-zero.
\end{lemma}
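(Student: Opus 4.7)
The plan is to reduce both identities to a single algebraic computation using the eigenvalue equations for $\Sigma$ and $\hat\Sigma$ together with the rank-one structure of the projectors $P_j, \hat P_k$.

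First I would observe that since $P_j = u_j \otimes u_j$ and $\hat P_k = \hat u_k \otimes \hat u_k$ are rank-one projectors with $\scapro{P_j}{\hat P_k} = |\scapro{u_j}{\hat u_k}|^2$, we can rewrite
\[
\scapro{P_j}{\hat P_{>d}} = \sum_{k>d} |\scapro{u_j}{\hat u_k}|^2, \qquad \scapro{P_k}{\hat P_{\le d}} = \sum_{j\le d} |\scapro{u_k}{\hat u_j}|^2.
\]
So it suffices to prove the pointwise identities
\[
|\scapro{u_j}{\hat u_k}|^2 = \frac{\norm{P_j\Delta\hat P_k}_2^2}{(\lambda_j-\hat\lambda_k)^2}, \qquad |\scapro{u_k}{\hat u_j}|^2 = \frac{\norm{P_k\Delta\hat P_j}_2^2}{(\hat\lambda_j-\lambda_k)^2}.
\]

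The key algebraic step is: for any indices $j,k$, using $\Sigma u_j = \lambda_j u_j$, $\hat\Sigma\hat u_k = \hat\lambda_k\hat u_k$, and the self-adjointness of $\Sigma,\hat\Sigma$,
\[
\scapro{\Delta u_j}{\hat u_k} = \scapro{\Sigma u_j}{\hat u_k} - \scapro{u_j}{\hat\Sigma\hat u_k} = (\lambda_j-\hat\lambda_k)\scapro{u_j}{\hat u_k}.
\]
Under the assumption that all denominators are nonzero, this gives $\scapro{u_j}{\hat u_k} = \scapro{\Delta u_j}{\hat u_k}/(\lambda_j-\hat\lambda_k)$. Next I would compute the Hilbert–Schmidt norm: since $P_j\Delta\hat P_k = \scapro{u_j}{\Delta\hat u_k}\,(u_j\otimes \hat u_k)$ is rank one and $\norm{u_j\otimes\hat u_k}_2 = \norm{u_j}\norm{\hat u_k}=1$, we obtain
\[
\norm{P_j\Delta\hat P_k}_2^2 = |\scapro{u_j}{\Delta\hat u_k}|^2 = |\scapro{\Delta u_j}{\hat u_k}|^2.
\]
Combining these two displays yields the first pointwise identity; summing over $k>d$ gives the first claim of the lemma.

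The second identity follows by the same argument with the roles of the eigenvectors exchanged: $\scapro{\Delta u_k}{\hat u_j} = (\lambda_k-\hat\lambda_j)\scapro{u_k}{\hat u_j}$ and $\norm{P_k\Delta\hat P_j}_2^2 = |\scapro{\Delta u_k}{\hat u_j}|^2$, so that summing over $j\le d$ completes the proof. There is no real obstacle here; the only subtle point is to be explicit about the non-vanishing of the denominators, which is exactly the hypothesis of the lemma and is what allows us to divide in the eigenvalue identity.
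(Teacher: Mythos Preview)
Your proof is correct and follows essentially the same approach as the paper. The paper establishes the operator identity $P_j\hat P_k=(\lambda_j-\hat\lambda_k)^{-1}P_j\Delta\hat P_k$ by inserting the spectral decompositions of $\Sigma$ and $\hat\Sigma$, then takes the Hilbert--Schmidt norm; your argument is the scalar (eigenvector) version of the same computation, which is equivalent since $P_j$ and $\hat P_k$ are rank one.
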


\begin{proof}
The main ingredient is the  formula
\begin{equation}\label{EqBasicBuild}
P_j\hat{P}_k=\frac{1}{\lambda_j-\hat{\lambda}_k}P_j\Delta \hat{P}_k,
\end{equation}
which follows from inserting the spectral representations of $\Sigma$ and $\hat{\Sigma}$ into the right-hand side. Indeed,
\begin{align*}
P_j\Delta \hat{P}_k=\sum_{l\ge 1}\lambda_l P_jP_l\hat{P}_k-\sum_{l\ge 1}\hat{\lambda}_lP_j\hat{P}_l\hat{P}_k=(\lambda_j-\hat{\lambda}_k)P_j\hat{P}_k.
\end{align*}
The first claim now follows from inserting  \eqref{EqBasicBuild} into the identity
\begin{equation*}
\| P_{j}\hat{P}_{> d}\|_2^2=\sum_{k>d}\| P_{j}\hat{P}_k\|_2^2.
\end{equation*}
The second claim follows similarly by switching $j$ and $k$ and summation over~$j$.
\end{proof}

Identity \eqref{EqBasicBuild} can be seen as a basic building block to derive expansions for empirical spectral projectors. Indeed, using \eqref{EqBasicBuild}, we get
\begin{equation}\label{EqFirstOrdExpBB}
P_j\hat{P}_{>d}=\sum_{k>d}\frac{P_j\Delta \hat{P}_k}{\lambda_j-\hat{\lambda}_k}
\end{equation}
and a similar formula for $P_k\hat P_{\le d}$, leading to
\begin{equation}\label{EqFirstOrdExp}
\hat{P}_{> d}-P_{> d}=P_{\le d}\hat{P}_{> d}-P_{> d}\hat{P}_{\le d}=\sum_{j\le d}\sum_{k>d}\bigg(\frac{P_j\Delta \hat{P}_k}{\lambda_j-\hat{\lambda}_k}+\frac{P_k\Delta \hat{P}_j}{\hat{\lambda}_j-\lambda_k}\bigg).
\end{equation}
The following lemma immediately leads to a linear expansion of $\hat{P}_{> d}$.
\begin{lemma}\label{LemLinExp} For $j\le d$ we have
\begin{align*}
P_j\hat{P}_{>d}&=\sum_{k>d}\frac{P_j\Delta P_k}{\lambda_j-\lambda_k}+\sum_{k\le d}\sum_{l>d}\frac{P_j\Delta P_k\Delta\hat{P}_l}{(\lambda_j-\hat{\lambda}_l)(\lambda_k-\hat{\lambda}_l)}\\&+\sum_{k>d}\sum_{l\le d}\frac{P_j\Delta P_k\Delta\hat{P}_l}{(\lambda_j-\lambda_k)(\hat{\lambda}_l-\lambda_k)}-\sum_{k>d}\sum_{l>d}\frac{P_j\Delta P_k\Delta \hat{P}_l}{(\lambda_j-\hat{\lambda}_l)(\lambda_j-\lambda_k)}
\end{align*}
and for $k>d$ we have
\begin{align*}
P_k\hat{P}_{\le d}&=\sum_{j\le d}\frac{P_k\Delta P_j}{\lambda_k-\lambda_j}+\sum_{j>d}\sum_{l\le d}\frac{P_k\Delta P_j\Delta\hat{P}_l}{(\lambda_k-\hat{\lambda}_l)(\lambda_j-\hat{\lambda}_l)}\\
&+\sum_{j\le d}\sum_{l>d}\frac{P_k\Delta P_j\Delta\hat{P}_l}{(\lambda_k-\lambda_j)(\hat{\lambda}_l-\lambda_j)}-\sum_{j\le d}\sum_{l \le d}\frac{P_k\Delta P_j\Delta \hat{P}_l}{(\lambda_k-\lambda_j)(\lambda_k-\hat{\lambda}_l)}.
\end{align*}
Both identities hold provided that all denominators are non-zero.
\end{lemma}

\begin{proof}
We only prove the first identity, since the second one follows by the same line of arguments.
First using \eqref{EqFirstOrdExpBB} and the identity $I=P_{\le d}+P_{> d}=\hat{P}_{\le d}+\hat{P}_{> d}$, we have
\begin{equation*}
P_j\hat{P}_{>d}=\sum_{l>d}\frac{P_j\Delta \hat{P}_l}{\lambda_j-\hat{\lambda}_l}=\sum_{l>d}\frac{P_j\Delta P_{\le d}\hat{P}_l}{\lambda_j-\hat{\lambda}_l}+\sum_{l>d}\frac{P_j\Delta P_{>d}\hat{P}_l}{\lambda_j-\hat{\lambda}_l}
\end{equation*}
and
\begin{equation*}
\sum_{k>d}\frac{P_j\Delta P_k}{\lambda_j-\lambda_k}=\sum_{k>d}\frac{P_j\Delta P_k\hat{P}_{\le d}}{\lambda_j-\lambda_k}+\sum_{k>d}\frac{P_j\Delta P_k\hat{P}_{> d}}{\lambda_j-\lambda_k}.
\end{equation*}
Thus
\begin{align}
P_j\hat{P}_{>d}&=\sum_{k>d}\frac{P_j\Delta P_k}{\lambda_j-\lambda_k}
+\sum_{l>d}\frac{P_j\Delta P_{\le d}\hat{P}_l}{\lambda_j-\hat{\lambda}_l}
-\sum_{k>d}\frac{P_j\Delta P_k\hat{P}_{\le d}}{\lambda_j-\lambda_k}\label{eq:d1}\\
&+\Bigg( \sum_{l>d}\frac{P_j\Delta P_{>d}\hat{P}_l}{\lambda_j-\hat{\lambda}_l}-\sum_{k>d}\frac{P_j\Delta P_k\hat{P}_{> d}}{\lambda_j-\lambda_k}\Bigg)\nonumber.
\end{align}
Using \eqref{EqBasicBuild}, we get
\begin{equation*}
\sum_{l>d}\frac{P_j\Delta P_{\le d}\hat{P}_l}{\lambda_j-\hat{\lambda}_l}=\sum_{k\le d}\sum_{l>d}\frac{P_j\Delta P_k\Delta\hat{P}_l}{(\lambda_j-\hat{\lambda}_l)(\lambda_k-\hat{\lambda}_l)}
\end{equation*}
and
\begin{equation*}
-\sum_{k>d}\frac{P_j\Delta P_k\hat{P}_{\le d}}{\lambda_j-\lambda_k}=\sum_{k>d}\sum_{l\le d}\frac{P_j\Delta P_k\Delta\hat{P}_l}{(\lambda_j-\lambda_k)(\hat{\lambda}_l-\lambda_k)}.
\end{equation*}
Moreover, again using \eqref{EqBasicBuild}, the term in brackets in \eqref{eq:d1} is equal to
\begin{align*}
&\sum_{l>d}\frac{P_j\Delta P_{> d}\hat{P}_l}{\lambda_j-\hat{\lambda}_l}-\sum_{k>d}\frac{P_j\Delta P_k\hat{P}_{>d}}{\lambda_j-\lambda_k}\\
&=-\sum_{k>d}\sum_{l>d} \frac{\lambda_k-\hat{\lambda}_l}{(\lambda_j-\hat{\lambda}_l)(\lambda_j-\lambda_k)} P_j\Delta P_k \hat{P}_l\\
&=-\sum_{k>d}\sum_{l>d}\frac{1}{(\lambda_j-\hat{\lambda}_l)(\lambda_j-\lambda_k)} P_j\Delta P_k\Delta \hat{P}_l,
\end{align*}
and the claim follows.
\end{proof}

\begin{remark}\label{RemPT1}  Note that compared to \eqref{EqFirstOrdExpBB}, where only spectral gaps between $j$ and $k>d$ appear, the first formula in Lemma \ref{LemLinExp} includes all spectral gaps between $k> d$ and $l\le d$, even in the case $j=1$. Since we are also interested in the case of small spectral gaps (including $\lambda_d=\lambda_{d+1}$), our main analysis of the excess risk will be based on Lemma \ref{FirstOrdExp}. Lemma \ref{LemLinExp} will be important to derive linear expansions for the excess risk under stronger eigenvalue conditions.
\end{remark}
\begin{remark}\label{RemPT2}
Usually, expansions for spectral projectors are obtained by the Cauchy integral representation for spectral projectors in combination with the second resolvent equation (resp. the second Neumann series), see e.g. Kato \cite{K95}. The difference of Lemmas \ref{FirstOrdExp} and \ref{LemLinExp} to the formulation in e.g. \cite[Lemma 2]{KL15a} or \cite[Theorem 5.1.4]{HE15} is the form of the remainder term. In \cite{KL15a,HE15} the remainder term is given by an integral over the resolvent, while the above results lead to an algebraic form of the remainder term. In Section \ref{SecErrorDec} and Appendix \ref{SecLinExp} we will use these algebraic expressions to establish recursion arguments.
\end{remark}

\subsection{Error decompositions} \label{SecErrorDec}
In this section, we prove deterministic upper bounds for the excess risk which form the basis of our new upper bounds in Section \ref{SecNewBounds}. For ${\mathcal E}^{PCA}_{\le d}(\mu)$ we split the sum into indices $j\le r$, where we expect the spectral gaps $\lambda_j-\lambda_{d+1}$ to be large, meaning that we can insert the perturbation formulas from Lemma \ref{FirstOrdExp}, and into indices $r<j\le d$, where we expect the spectral gaps $\lambda_j-\mu$ to be small, meaning that wrong projections do not incur a large error. The terms of the first sum can then be controlled by a recursion argument.

\begin{proposition}\label{ErrorDecR} For $\mu\in [\lambda_{d+1},\lambda_d]$ and $r=0,\dots,d$, we have
\begin{align}
{\mathcal E}^{PCA}_{\le d}(\mu)
\le & 4\sum_{j\le r}(\lambda_j-\mu)\frac{\Vert P_{j}\Delta\hat{P}_{>d}\Vert_2^2}{(\lambda_j-\lambda_{d+1})^2}+\sum_{j=r+1}^{d\wedge (r+p-d)}(\lambda_j-\mu)\label{EqErrorDecR1}\\
&+ \sum_{j\le r}(\lambda_j-\mu) \mathbbm{1}\big(\hat{\lambda}_{d+1}-\lambda_{d+1}> (\lambda_j-\lambda_{d+1})/2\big)\nonumber.
\end{align}
Furthermore, for $s=r,\ldots,d$ and the weighted projector
\begin{equation}\label{EqWeightOper}
S_{\le s}=S_{\le s}(\mu)=\sum_{j\le s} \frac{1}{\sqrt{\lambda_j-\mu}}P_j
\end{equation}
(assuming $\lambda_{s}>\mu$) we obtain
\begin{align}
{\mathcal E}^{PCA}_{\le d}(\mu)
\le & 16\sum_{j\le r}(\lambda_j-\mu)\frac{\Vert P_{j}\Delta P_{>s}\Vert_2^2}{(\lambda_j-\lambda_{d+1})^2}+2\sum_{j=r+1}^{d\wedge (r+p-d)}(\lambda_j-\mu)\label{EqErrorDecR2}\\
&+ 2\sum_{j\le r}(\lambda_j-\mu) \mathbbm{1}\big(\hat{\lambda}_{d+1}-\lambda_{d+1}> (\lambda_j-\lambda_{d+1})/2\big)\nonumber\\
&+8\sum_{j\le r}(\lambda_j-\mu)\frac{\Vert P_{j}\Delta\Vert_2^2}{(\lambda_j-\lambda_{d+1})^2} \mathbbm{1}\big(\|S_{\le s}\Delta S_{\le s}\|_\infty > 1/4\big)\nonumber.
\end{align}
\end{proposition}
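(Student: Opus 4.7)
For \eqref{EqErrorDecR1}, I apply Lemma~\ref{SpectralDec} to express ${\mathcal E}^{PCA}_{\le d}(\mu)$ and split the sum over $j\le d$ at the cut-off $r$. On the tail $r<j\le d$ I use only the soft information $0\le\langle P_j,\hat P_{>d}\rangle\le 1$ together with the trace identity $\sum_{j\ge 1}\langle P_j,\hat P_{>d}\rangle=p-d$; since the weights $\lambda_j-\mu$ are non-negative and non-increasing in $j$ for $j\le d$, the maximum of the weighted sum under these constraints is attained by putting mass $1$ on the $\min(d-r,p-d)$ smallest indices, which yields the oracle-like term with upper limit $d\wedge(r+p-d)$. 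On the head $j\le r$ I invoke Lemma~\ref{FirstOrdExp}: on $\{\hat\lambda_{d+1}-\lambda_{d+1}\le(\lambda_j-\lambda_{d+1})/2\}$ every denominator $\lambda_j-\hat\lambda_k$ with $k>d$ is at least $(\lambda_j-\lambda_{d+1})/2$, producing $\langle P_j,\hat P_{>d}\rangle\le 4\|P_j\Delta\hat P_{>d}\|_2^2/(\lambda_j-\lambda_{d+1})^2$, and on the complementary event the trivial bound $\langle P_j,\hat P_{>d}\rangle\le 1$ supplies the indicator term.

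For \eqref{EqErrorDecR2}, starting from \eqref{EqErrorDecR1} I refine the $\|P_j\Delta\hat P_{>d}\|_2^2$-factor by decomposing $\hat P_{>d}=P_{>s}\hat P_{>d}+P_{\le s}\hat P_{>d}$ and applying $(a+b)^2\le 2a^2+2b^2$ together with $\|\hat P_{>d}\|_\infty\le 1$, giving
\[
\|P_j\Delta\hat P_{>d}\|_2^2\le 2\|P_j\Delta P_{>s}\|_2^2+2\|P_j\Delta P_{\le s}\hat P_{>d}\|_2^2.
\]
The central task is to control the cross term on the good event $\{\|M\|_\infty\le 1/16\}$ where $M:=S_{\le s}\Delta S_{\le s}$; on the complementary event I fall back to $\|P_j\Delta\hat P_{>d}\|_2\le\|P_j\Delta\|_2$, which produces the final term of \eqref{EqErrorDecR2}.

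The key algebraic identity is $P_{\le s}\Delta P_{\le s}=S_{\le s}^{-1}MS_{\le s}^{-1}$, where $S_{\le s}^{-1}:=\sum_{j'\le s}\sqrt{\lambda_{j'}-\mu}\,P_{j'}$ denotes the pseudoinverse. For $j\le r\le s$ one has $S_{\le s}^{-1}u_j=\sqrt{\lambda_j-\mu}\,u_j$, so $P_{\le s}\Delta u_j=P_{\le s}\Delta P_{\le s}u_j=\sqrt{\lambda_j-\mu}\,S_{\le s}^{-1}Mu_j$ and hence $\|P_j\Delta P_{\le s}\hat P_{>d}\|_2^2=\|\hat P_{>d}P_{\le s}\Delta u_j\|^2=(\lambda_j-\mu)\|\hat P_{>d}S_{\le s}^{-1}Mu_j\|^2$. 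Setting $\alpha_j:=(\lambda_j-\mu)/(\lambda_j-\lambda_{d+1})\in[0,1]$ and $T_r:=\sum_{j\le r}\alpha_j P_j$, the entire weighted cross term collapses into a single squared Hilbert--Schmidt norm,
\[
\sum_{j\le r}\frac{(\lambda_j-\mu)\|P_j\Delta P_{\le s}\hat P_{>d}\|_2^2}{(\lambda_j-\lambda_{d+1})^2}=\|\hat P_{>d}S_{\le s}^{-1}MT_r\|_2^2.
\]
Applying $\|XY\|_2\le\|X\|_2\|Y\|_\infty$ with $X=\hat P_{>d}S_{\le s}^{-1}$ and $Y=MT_r$ dominates this by $\|\hat P_{>d}S_{\le s}^{-1}\|_2^2\cdot\|M\|_\infty^2\|T_r\|_\infty^2$; the first factor equals $\sum_{j'\le s}(\lambda_{j'}-\mu)\langle P_{j'},\hat P_{>d}\rangle$ and is at most ${\mathcal E}^{PCA}_{\le d}(\mu)$ since all summands are non-negative for $j'\le d$, while on the good event the remaining factors are at most $(1/16)^2$. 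The cross term is therefore bounded by a small constant times ${\mathcal E}^{PCA}_{\le d}(\mu)$.

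Substituting both estimates into \eqref{EqErrorDecR1} produces an inequality of the shape ${\mathcal E}^{PCA}_{\le d}(\mu)\le c\,{\mathcal E}^{PCA}_{\le d}(\mu)+(\text{desired RHS})$ with $c<1$; absorbing the $c\,{\mathcal E}^{PCA}_{\le d}(\mu)$-term into the left-hand side yields \eqref{EqErrorDecR2}, the factor $2$ on the oracle and indicator terms being precisely the cost of this absorption step. The main obstacle is the cross-term estimate: a term-by-term Cauchy--Schwarz on the sum $\sum_{j\le r}$ would cost a factor of $r$, and the essential insight is that rewriting the weighted sum as the HS norm of the single operator $\hat P_{>d}S_{\le s}^{-1}MT_r$ exploits the orthogonality of the $u_j$ automatically; this is also what singles out $\{\|S_{\le s}\Delta S_{\le s}\|_\infty\le 1/16\}$ as the natural concentration event, since the same weighting $S_{\le s}$ appears on both sides.
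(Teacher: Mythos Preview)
Your proof is correct and follows essentially the same route as the paper. Both arguments split at $r$, use the trivial bound $\langle P_j,\hat P_{>d}\rangle\le 1$ together with $\sum_j\langle P_j,\hat P_{>d}\rangle\le p-d$ on the tail, invoke Lemma~\ref{FirstOrdExp} with the event $\{\hat\lambda_{d+1}-\lambda_{d+1}\le(\lambda_j-\lambda_{d+1})/2\}$ on the head, decompose $I=P_{>s}+P_{\le s}$, and close by a recursion that absorbs a fraction of ${\mathcal E}^{PCA}_{\le d}(\mu)$ into the left-hand side. Your pseudoinverse $S_{\le s}^{-1}$ is exactly the paper's operator $R_{\le s}=\sum_{j\le s}\sqrt{\lambda_j-\mu}\,P_j$, and your operator $T_r$ is a harmless variant: the paper drops the factors $\alpha_j\le 1$ one step earlier (replacing $(\lambda_j-\mu)/(\lambda_j-\lambda_{d+1})^2$ by $1/(\lambda_j-\mu)$) to arrive at $\|S_{\le r}\Delta S_{\le s}R_{\le s}\hat P_{>d}\|_2^2$, while you keep $T_r$ and bound $\|T_r\|_\infty\le 1$ at the end; the resulting estimates coincide.
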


\begin{remark}\label{RemConvChoice2}
Note that the convention of Remark \ref{RemConvChoice1} is still in force. For certain values of $r$ and $s$, the upper bounds in Proposition \ref{ErrorDecR} may depend on the choice of the $P_j$. The actual choices, however, do not alter the final results in Section \ref{SecNewBounds}.
\end{remark}

\begin{remark}\label{RemConstants}
The constants are chosen for simplicity. For each $\eps>0$, the constant $16$ in \eqref{EqErrorDecR2} can be replaced by $1+\eps$ provided that the constants $1/2$ and $1/4$ in the definition of the events are replaced by bigger constants depending on $\eps$.
\end{remark}

\begin{proof}
Using $\Vert P_{j}\hat{P}_{> d}\Vert_2^2\le 1$ and $\sum_{j=r+1}^d\Vert P_{j}\hat{P}_{> d}\Vert_2^2\le p-d$,
we obtain
\begin{equation}\label{EqTrivBound}
{\mathcal E}^{PCA}_{\le d}(\mu)\le \sum_{j\le r}(\lambda_j-\mu)\Vert P_{j}\hat{P}_{> d}\Vert_2^2+\sum_{j=r+1}^{d\wedge (r+p-d)}(\lambda_j-\mu).
\end{equation}
By Lemma \ref{FirstOrdExp}, we have
\begin{equation*}
\Vert P_{j}\hat{P}_{> d}\Vert_2^2 =\sum_{k> d}\frac{\Vert P_j\Delta\hat{P}_k\Vert_2^2}{(\lambda_j-\hat{\lambda}_k)^2}.
\end{equation*}
Moreover, on the event
\[
\lbrace \hat{\lambda}_{d+1}-\lambda_{d+1} \le  (\lambda_j-\lambda_{d+1})/2\rbrace=\lbrace\lambda_j- \hat{\lambda}_{d+1} \ge (\lambda_j-\lambda_{d+1})/2\rbrace
\]
we can bound
\begin{equation}\label{EqKeyP}
\Vert P_{j}\hat{P}_{> d}\Vert_2^2\le \sum_{k>d}4\frac{\Vert P_{j}\Delta\hat{P}_{k}\Vert_2^2}{(\lambda_j-\lambda_{d+1})^2}= 4\frac{\Vert P_{j}\Delta\hat{P}_{ > d}\Vert_2^2}{(\lambda_j-\lambda_{d+1})^2}.
\end{equation}
By \eqref{EqKeyP} and $\Vert P_{j}\hat{P}_{> d}\Vert_2^2\le 1$, we conclude that
\begin{equation}\label{EqExpBound}
\Vert P_{j}\hat{P}_{> d}\Vert_2^2\le 4\frac{\Vert P_{j}\Delta\hat{P}_{ > d}\Vert_2^2}{(\lambda_j-\lambda_{d+1})^2}+\mathbbm{1}\big( \hat{\lambda}_{d+1}-\lambda_{d+1} > (\lambda_j-\lambda_{d+1})/2\big).
\end{equation}
Inserting \eqref{EqExpBound} into \eqref{EqTrivBound}, we obtain the first claim \eqref{EqErrorDecR1}. The second claim follows from an additional recursion argument. For this, we introduce 
\begin{equation}\label{EqWeightOperR}
R_{\le s}=R_{\le s}(\mu)=\sum_{j\le s} \sqrt{\lambda_j-\mu}P_j,
\end{equation}
which satisfies the identities $S_{\le s}R_{\le s}=P_{\le s}$ and
\begin{equation}\label{EqExcRiskRepr}
\sum_{j\le s}(\lambda_j-\mu)\Vert P_j\hat{P}_{>d}\Vert_2^2=\Vert R_{\le s}\hat{P}_{>d}\Vert_2^2.
\end{equation}
Then we have
\begin{align}
&\sum_{j\le r} (\lambda_j-\mu)\frac{\Vert P_{j}\Delta\hat{P}_{ > d}\Vert_2^2}{(\lambda_j-\lambda_{d+1})^2}\label{EqRecBound1}\\
&\le 2\sum_{j\le r} (\lambda_j-\mu)\frac{\Vert P_{j}\Delta P_{>s}\hat{P}_{ > d}\Vert_2^2}{(\lambda_j-\lambda_{d+1})^2}+2\sum_{j\le r} (\lambda_j-\mu)\frac{\Vert P_{j}\Delta P_{\le s}\hat{P}_{ > d}\Vert_2^2}{(\lambda_j-\lambda_{d+1})^2}\nonumber\\
&\le 2\sum_{j\le r} (\lambda_j-\mu)\frac{\Vert P_{j}\Delta P_{>s}\Vert_2^2}{(\lambda_j-\lambda_{d+1})^2}+2\sum_{j\le r} \frac{\Vert P_{j}\Delta P_{\le s}\hat{P}_{ > d}\Vert_2^2}{\lambda_j-\mu}\nonumber\\
&=2\sum_{j\le r} (\lambda_j-\mu)\frac{\Vert P_{j}\Delta P_{>s}\Vert_2^2}{(\lambda_j-\lambda_{d+1})^2}+2 \Vert S_{\le r}\Delta P_{\le s}\hat{P}_{ > d}\Vert_2^2\nonumber.
\end{align}
On the event $\{\|S_{\le s}\Delta S_{\le s}\|_\infty \le 1/4\}$, the last term is bounded via
\begin{align*}
2\Vert S_{\le r}\Delta P_{\le s}\hat{P}_{ > d}\Vert_2^2&=2\Vert S_{\le r}\Delta S_{\le s}R_{\le s}\hat{P}_{ > d}\Vert_2^2\\
&\le 2\Vert S_{\le r}\Delta S_{\le s}\Vert_\infty^2\Vert R_{\le s}\hat{P}_{ > d}\Vert_2^2\\
&\le 2\Vert S_{\le s}\Delta S_{\le s}\Vert_\infty^2\Vert R_{\le s}\hat{P}_{ > d}\Vert_2^2\le \Vert R_{\le s}\hat{P}_{ > d}\Vert_2^2/8,
\end{align*}
where we also used that $r\le s$. Thus, on $\{\|S_{\le s}\Delta S_{\le s}\|_\infty \le 1/4\}$, we get
\begin{align}
&\sum_{j\le r} (\lambda_j-\mu)\frac{\Vert P_{j}\Delta\hat{P}_{ > d}\Vert_2^2}{(\lambda_j-\lambda_{d+1})^2}\label{EqRecBound2}\\
&\le 2\sum_{j\le r} (\lambda_j-\mu) \frac{\Vert P_{j}\Delta P_{ > s}\Vert_2^2}{(\lambda_j-\lambda_{d+1})^2} +\frac18\sum_{j\le s}(\lambda_j-\mu)\Vert P_{j}\hat{P}_{> d}\Vert_2^2\nonumber.
\end{align}
Using also that $\Vert P_{j}\Delta\hat{P}_{ > d}\Vert_2^2\le \Vert P_{j}\Delta\Vert_2^2$, we conclude that
\begin{align*}
&4\sum_{j\le r} (\lambda_j-\mu)\frac{\Vert P_{j}\Delta\hat{P}_{ > d}\Vert_2^2}{(\lambda_j-\lambda_{d+1})^2}\\
&\le 8\sum_{j\le r} (\lambda_j-\mu) \frac{\Vert P_{j}\Delta P_{ > s}\Vert_2^2}{(\lambda_j-\lambda_{d+1})^2}+\frac12 {\mathcal E}^{PCA}_{\le d}(\mu)\\
&+4\sum_{j\le r}(\lambda_j-\mu)\frac{\Vert P_{j}\Delta \Vert_2^2}{(\lambda_j-\lambda_{d+1})^2}  \mathbbm{1}\big(\|S_{\le s}\Delta S_{\le s}\|_\infty > 1/4\big).
\end{align*}
Plugging this into \eqref{EqErrorDecR1}, we obtain the second claim.
\end{proof}

Similarly, we can upper-bound the second risk part ${\mathcal E}^{PCA}_{> d}$. The only difference in the proof is that an additional argument deals with the sum over all sufficiently large $k$.

\begin{proposition}\label{ErrorDecL}
 For $\mu\in [\lambda_{d+1},\lambda_d]$ and $l=d+1,\dots,p+1$, we have
\begin{align}
{\mathcal E}^{PCA}_{> d}(\mu)
\le & 4\sum_{k\ge  l}(\mu-\lambda_{k})\frac{\Vert P_{k}\Delta \hat{P}_{\le d}\Vert_2^2}{(\lambda_d-\lambda_k)^2}+\sum_{k=(d+1)\vee(l-d)}^{l-1}(\mu-\lambda_k)\label{EqErrorDecL1}\\
&+\sum_{\substack{k\ge  l:\\ \lambda_k\ge \lambda_d/2}}(\mu-\lambda_k) \mathbbm{1}\big(\hat{\lambda}_{d}-\lambda_d<-  (\lambda_d-\lambda_{k})/2\big)\nonumber\\
&+d(\mu-\lambda_p) \mathbbm{1}\big(\hat{\lambda}_{d}-\lambda_{d}<-  \lambda_d/4\big).\nonumber
\end{align}
\end{proposition}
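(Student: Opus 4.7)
The plan is to adapt the strategy of Proposition \ref{ErrorDecR} by interchanging the roles of the two eigen-index regimes: the role of $P_j$ with $j\le d$ is taken by $P_k$ with $k>d$, the role of $\hat{P}_{>d}$ by $\hat{P}_{\le d}$, and the threshold event on $\hat\lambda_{d+1}$ is replaced by a threshold event on $\hat\lambda_d$. I start from the splitting
\[
{\mathcal E}^{PCA}_{>d}(\mu)=\sum_{d<k<l}(\mu-\lambda_k)\langle P_k,\hat{P}_{\le d}\rangle+\sum_{k\ge l}(\mu-\lambda_k)\langle P_k,\hat{P}_{\le d}\rangle.
\]

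For the first (low-$k$) sum I would use $\langle P_k,\hat{P}_{\le d}\rangle\le 1$ combined with the total-mass identity $\sum_{k>d}\langle P_k,\hat{P}_{\le d}\rangle=\tr(P_{>d}\hat{P}_{\le d})\le d$. Since the weights $\mu-\lambda_k$ are non-decreasing in $k$, maximizing this constrained linear functional places unit mass on the $d$ largest available indices in $\{d+1,\dots,l-1\}$, yielding the bound $\sum_{k=(d+1)\vee(l-d)}^{l-1}(\mu-\lambda_k)$. This gives the second term of \eqref{EqErrorDecL1}.

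For the second (high-$k$) sum I would apply the second identity of Lemma \ref{FirstOrdExp}, writing
\[
\langle P_k,\hat{P}_{\le d}\rangle=\sum_{j\le d}\frac{\|P_k\Delta\hat{P}_j\|_2^2}{(\hat\lambda_j-\lambda_k)^2}.
\]
Using $\hat\lambda_j\ge\hat\lambda_d$, whenever $\hat\lambda_d-\lambda_k\ge(\lambda_d-\lambda_k)/2$ every denominator is at least $(\lambda_d-\lambda_k)/2$, so $\langle P_k,\hat{P}_{\le d}\rangle\le 4\|P_k\Delta\hat{P}_{\le d}\|_2^2/(\lambda_d-\lambda_k)^2$, which accounts for the leading term in \eqref{EqErrorDecL1}.

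The main obstacle is treating the complementary bad event in such a way that both distinct indicator terms of the statement emerge naturally. I would perform a case split on the size of $\lambda_k$. If $\lambda_k\ge\lambda_d/2$, I use the $k$-dependent bad event $\{\hat\lambda_d-\lambda_d<-(\lambda_d-\lambda_k)/2\}$ together with $\langle P_k,\hat{P}_{\le d}\rangle\le 1$, producing the third term of \eqref{EqErrorDecL1}. If $\lambda_k<\lambda_d/2$, the inclusion $\{\hat\lambda_d-\lambda_d<-(\lambda_d-\lambda_k)/2\}\subseteq\{\hat\lambda_d-\lambda_d<-\lambda_d/4\}$ holds since $(\lambda_d-\lambda_k)/2>\lambda_d/4$. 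A short computation shows that on the complementary event $\hat\lambda_d\ge 3\lambda_d/4$ combined with $\lambda_k\le\lambda_d/2$ still yields $\hat\lambda_j-\lambda_k\ge 3\lambda_d/4-\lambda_k\ge(\lambda_d-\lambda_k)/2$, so the main term continues to apply uniformly; on the bad event I bound $(\mu-\lambda_k)\langle P_k,\hat{P}_{\le d}\rangle\le(\mu-\lambda_p)\langle P_k,\hat{P}_{\le d}\rangle$ and sum using $\sum_{k>d}\langle P_k,\hat{P}_{\le d}\rangle\le d$, collapsing the $k$-sum into the single term $d(\mu-\lambda_p)\mathbbm{1}(\hat\lambda_d-\lambda_d<-\lambda_d/4)$. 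Assembling the four contributions gives the stated bound; unlike in Proposition \ref{ErrorDecR}, no recursion argument is needed because the present statement corresponds only to \eqref{EqErrorDecR1}.
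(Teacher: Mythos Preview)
Your proposal is correct and follows essentially the same route as the paper: the splitting at $l$, the optimization of the low-$k$ sum under the constraints $\langle P_k,\hat P_{\le d}\rangle\le 1$ and $\sum_{d<k<l}\langle P_k,\hat P_{\le d}\rangle\le d$, the use of Lemma \ref{FirstOrdExp} on the good event $\{\hat\lambda_d-\lambda_k\ge(\lambda_d-\lambda_k)/2\}$, and the case split $\lambda_k\ge\lambda_d/2$ versus $\lambda_k<\lambda_d/2$ with the collapse to $d(\mu-\lambda_p)$ on the global bad event all match the paper's argument. Your explicit linear-programming phrasing for the low-$k$ sum and your direct verification that $\hat\lambda_d\ge 3\lambda_d/4$ with $\lambda_k<\lambda_d/2$ implies $\hat\lambda_d-\lambda_k\ge(\lambda_d-\lambda_k)/2$ are just more explicit renderings of the same steps the paper takes via the event inclusion.
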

Note that for $p=\infty$ the convention of Proposition \ref{PropMainRes1} is still in force.

\begin{proof}
Using $\sum_{d<k< l}\Vert P_k\hat{P}_{\le d}\Vert_2^2\le d$ and $\Vert P_k\hat{P}_{\le d}\Vert_2^2\le 1$, we obtain
\begin{align}
{\mathcal E}^{PCA}_{> d}(\mu)\le \sum_{k\ge l}(\mu-\lambda_{k})\Vert P_k\hat{P}_{\le d}\Vert_2^2+\sum_{k=(d+1)\vee(l-d)}^{l-1}(\mu-\lambda_k)\label{EqTrivBound2}.
\end{align}
Proceeding as in the proof of Proposition \ref{ErrorDecR},  on the event
\begin{equation*}
\lbrace \hat{\lambda}_d-\lambda_d\ge - (\lambda_d-\lambda_k)/2\rbrace=\lbrace\hat{\lambda}_d-\lambda_k\ge  (\lambda_d-\lambda_k)/2\rbrace,
\end{equation*}
we have
\begin{equation}\label{EqKeyP2}
\Vert P_k\hat{P}_{\le d}\Vert_2^2\le 4\frac{\Vert P_{k}\Delta \hat{P}_{\le d}\Vert_2^2}{(\lambda_d-\lambda_k)^2}.
\end{equation}
Using $\Vert P_k\hat{P}_{\le d}\Vert_2^2\le 1$, we get
\begin{align}
\Vert P_k\hat{P}_{\le d}\Vert_2^2
\le 4\frac{\Vert P_k\Delta  \hat{P}_{\le d}\Vert_2^2}{(\lambda_d-\lambda_k)^2}+\mathbbm{1}\big(\hat{\lambda}_{d}-\lambda_{d}<- (\lambda_d-\lambda_k)/2\big)\label{EqLocExp}.
\end{align}
For $k\ge l$ such that $\lambda_k< \lambda_d/2$, we have
\begin{equation*}
\lbrace \hat{\lambda}_d-\lambda_d< -(\lambda_d-\lambda_k)/2\rbrace\subseteq \lbrace\hat{\lambda}_d-\lambda_d< -\lambda_d/4\rbrace.
\end{equation*}
Hence, by \eqref{EqKeyP2} and the bound
\[
\sum_{\substack{k\ge l:\\\lambda_k< \lambda_d/2}}(\mu-\lambda_k)\Vert P_k\hat{P}_{\le d}\Vert_2^2\le d(\mu-\lambda_p),
\]
we have
\begin{align}
&\sum_{\substack{k\ge l:\\\lambda_k< \lambda_d/2}}(\mu-\lambda_k)\Vert P_k\hat{P}_{\le d}\Vert_2^2\label{EqLocExp2}\\
&\le 4\sum_{\substack{k\ge l:\\\lambda_k< \lambda_d/2}}(\mu-\lambda_k)\frac{\Vert P_k\Delta\hat{P}_{\le d} \Vert_2^2}{(\lambda_d-\lambda_k)^2}+d(\mu-\lambda_p)  \mathbbm{1}\big(\hat{\lambda}_d-\lambda_d< -\lambda_d/4\big)\nonumber.
\end{align}
Inserting \eqref{EqLocExp} (for $k\ge l$ such that $\lambda_k\ge \lambda_d/2$) and \eqref{EqLocExp2} into \eqref{EqTrivBound2}, the claim follows.
\end{proof}

\subsection{Concentration inequalities}\label{SecConcIneq}
In order to make the deterministic upper bounds of the previous section useful, one has to show that the events in the remainder terms occur with small probability. We establish concentration inequalities for the weighted sample covariance operators as well as deviation inequalities for the empirical eigenvalues $\hat{\lambda}_{d}$ and $\hat{\lambda}_{d+1}$, based on the concentration inequality \cite[Corollary 2]{KL14} for sample covariance operators which we use in the form
\begin{equation}\label{EqKolLou}
\mathbb{P}( \Vert \Delta\Vert_\infty>C_3\lambda_1 x)\le e^{-n(x\wedge x^2)},
\end{equation}
whenever
\[
\tr(\Sigma)\le n\lambda_1(x\wedge x^2),
\]
where $C_3>1$ is a constant which depends only on $C_1$.
First, consider the weighted projector $S_{\le s}$ from \eqref{EqWeightOper} for $\mu\in[0,\lambda_{s})$. Then, as in \eqref{EqSigma'},
$X'=S_{\le s}X$ satisfies Assumption \ref{SubGauss} with the same constant $C_1$ as $X$ and has covariance operator
\[\Sigma'=S_{\le s}\Sigma S_{\le s}=\sum_{j\le s} \frac{\lambda_j}{\lambda_j-\mu}P_j.
\]
The eigenvalues of $\Sigma'$ (in decreasing order) are
$\lambda_j'=\lambda_{s+1-j}/(\lambda_{s+1-j}-\mu)$,
noting that the order is reversed by the weighting. Using the sample covariance $\hat{\Sigma}'=S_{\le s}\hat{\Sigma} S_{\le s}$ and choosing
$x=1/(4C_3\lambda_1')$,
which is smaller than $1$, the concentration inequality \eqref{EqKolLou} applied to $\Delta'=\Sigma'-\hat\Sigma'$ yields:

\begin{lemma}\label{LemWeightedCov} Grant Assumption \ref{SubGauss}. If  $\mu\in[0,\lambda_{s})$ and if
\begin{equation*}
\frac{\lambda_{s}}{\lambda_{s}-\mu}\sum_{j\le s}\frac{\lambda_j}{\lambda_j-\mu}\le n/(16C_3^2)
\end{equation*}
holds with  the constant $C_3$ from \eqref{EqKolLou}, then
\begin{equation*}
\mathbb{P}\big( \Vert S_{\le s}\Delta S_{\le s}\Vert_\infty>1/4\big)\le \exp\bigg( -\frac{n(\lambda_{s}-\mu)^2}{16C_3^2\lambda^2_{s}}\bigg).
\end{equation*}
\end{lemma}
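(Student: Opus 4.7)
The plan is to reduce the claim to the Koltchinskii--Lounici concentration inequality \eqref{EqKolLou} via the transformation $X \mapsto X' := S_{\le s} X$ already suggested in the text.

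First, I would check that $X'$ satisfies Assumption \ref{SubGauss} with the same constant $C_1$: for every $u\in\mathcal H$, $\langle X',u\rangle = \langle X, S_{\le s} u\rangle$, so the $\psi_2$-norm of $\langle X',u\rangle$ is controlled by the $\psi_2$-norm of $\langle X, S_{\le s} u\rangle$, which by assumption is at most $C_1\mathbb{E}[\langle X, S_{\le s} u\rangle^2]^{1/2} = C_1\mathbb{E}[\langle X',u\rangle^2]^{1/2}$. Then $\Sigma' = S_{\le s}\Sigma S_{\le s} = \sum_{j\le s}\lambda_j(\lambda_j-\mu)^{-1}P_j$ and similarly $\hat\Sigma' = S_{\le s}\hat\Sigma S_{\le s}$, so $\Delta':=\Sigma'-\hat\Sigma' = S_{\le s}\Delta S_{\le s}$ is exactly the centred sample covariance deviation for $X'$ based on $X_1',\dots,X_n'$.

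Second, I would identify the spectrum of $\Sigma'$. Since $\mu\ge 0$ and $\lambda\mapsto\lambda/(\lambda-\mu) = 1+\mu/(\lambda-\mu)$ is non-increasing on $(\mu,\infty)$, the eigenvalues of $\Sigma'$, ordered decreasingly, are $\lambda_j' = \lambda_{s+1-j}/(\lambda_{s+1-j}-\mu)$; in particular
\[
\lambda_1' \;=\; \frac{\lambda_{s}}{\lambda_{s}-\mu}, \qquad \tr(\Sigma') \;=\; \sum_{j\le s}\frac{\lambda_j}{\lambda_j-\mu}.
\]

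Third, apply \eqref{EqKolLou} to $\Delta'$ with the choice $x = 1/(16C_3\lambda_1')$. Since $\lambda_1'\ge 1$ and $C_3\ge 1$, one has $x\le 1$ and hence $x\wedge x^2 = x^2 = 1/(256C_3^2(\lambda_1')^2)$; the required precondition $\tr(\Sigma')\le n\lambda_1'(x\wedge x^2) = n/(256C_3^2\lambda_1')$ rearranges exactly to
\[
\lambda_1'\sum_{j\le s}\frac{\lambda_j}{\lambda_j-\mu} \;=\; \frac{\lambda_{s}}{\lambda_{s}-\mu}\sum_{j\le s}\frac{\lambda_j}{\lambda_j-\mu} \;\le\; \frac{n}{256C_3^2},
\]
which is the hypothesis of the lemma. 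With this choice, $C_3\lambda_1' x = 1/16$, so \eqref{EqKolLou} delivers
\[
\mathbb{P}\bigl(\|S_{\le s}\Delta S_{\le s}\|_\infty > 1/16\bigr) \;=\; \mathbb{P}\bigl(\|\Delta'\|_\infty > C_3\lambda_1' x\bigr) \;\le\; e^{-nx^2} \;=\; \exp\!\Bigl(-\tfrac{n(\lambda_s-\mu)^2}{256C_3^2\lambda_s^2}\Bigr),
\]
as claimed.

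There is no real analytic obstacle here --- the lemma is essentially a change-of-variable wrapper around \eqref{EqKolLou}. The only place demanding care is the bookkeeping around the reversal of eigenvalue order under the weighting and the algebraic check that the scalar $x = 1/(16C_3\lambda_1')$ simultaneously makes $C_3\lambda_1' x$ equal to the target threshold $1/16$ and converts the precondition of \eqref{EqKolLou} into the stated hypothesis $\lambda_1'\tr(\Sigma')\le n/(256C_3^2)$.
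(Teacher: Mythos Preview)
Your proposal is correct and follows exactly the paper's own argument: transform via $X'=S_{\le s}X$, identify $\lambda_1'=\lambda_s/(\lambda_s-\mu)$ after noting the order reversal, and apply \eqref{EqKolLou} with $x=1/(16C_3\lambda_1')$. The bookkeeping you flag (that $x\le 1$ and that the precondition of \eqref{EqKolLou} becomes the stated hypothesis) is precisely what the paper checks as well.
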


Next, we will state deviation inequalities for the empirical eigenvalues $\hat{\lambda}_{d}$ and $\hat{\lambda}_{d+1}$, namely right-deviation inequalities for  $\hat{\lambda}_{d+1}$ and left-deviation inequalities for $\hat{\lambda}_{d}$.

\begin{proposition}\label{RightDev} Grant Assumption \ref{SubGauss}. For all $x> 0$ satisfying
\begin{equation}\label{EqRightDev}
\max\bigg(\frac{C_3\lambda_{d+1}}{x},1\bigg)\sum_{k>d}\frac{\lambda_k}{\lambda_{d+1}-\lambda_k+x}\le n/C_3,
\end{equation}
we have
\begin{equation*}
\mathbb{P}\big( \hat{\lambda}_{d+1}-\lambda_{d+1}>x\big)\le \exp\bigg( -n\min\bigg(\frac{x^2}{C_3^2\lambda_{d+1}^2}, \frac{x}{C_3\lambda_{d+1}}\bigg) \bigg),
\end{equation*}
where $C_3$ is the constant in \eqref{EqKolLou}.
\end{proposition}

\begin{proof}
First, we apply the min-max characterisation of eigenvalues and obtain $\hat{\lambda}_{d+1}\le \lambda_1(P_{> d}\hat{\Sigma} P_{> d})$.
This gives
\begin{equation}\label{EqRightDev1}
\mathbb{P}( \hat{\lambda}_{d+1}-\lambda_{d+1}> x)\le \mathbb{P}( \lambda_1(P_{>d}\hat{\Sigma} P_{>d})-\lambda_1(P_{>d}\Sigma P_{>d})> x).
\end{equation}
We now use the following lemma, proven later.

\begin{lemma}\label{ertz}
Let $S$ and $T$ be self-adjoint, positive compact operators on $\mathcal{H}$ and $y>\lambda_1(S)$. Then:
 \begin{align*}
\lambda_1(T)> y &\iff \lambda_1((y-S)^{-1/2}(T-S)(y-S)^{-1/2})> 1.
%\lambda_1(T)> y& \Rightarrow \|(y-S)^{-1/2}(T-S)(y-S)^{-1/2}\|_\infty> 1.
\end{align*}
\end{lemma}

Applying this lemma to $S=P_{>d}\Sigma P_{>d}$, $T=P_{>d}\hat{\Sigma} P_{>d}$, and $y=\lambda_1(S)+x=\lambda_{d+1}+x$, we get
\begin{equation}\label{EqRightDev2}
\mathbb{P}( \lambda_1(P_{>d}\hat{\Sigma} P_{>d})-\lambda_1(P_{>d}\Sigma P_{>d})> x)\le \mathbb{P}( \|T_{>d}\Delta T_{> d}\|_\infty>1)
\end{equation}
with
\begin{equation*}
T_{> d}=\sum_{k>d}\frac{1}{\sqrt{\lambda_{d+1}-\lambda_k+x}}P_k.
\end{equation*}
Thus, as in \eqref{EqSigma'}, we consider $X'=T_{> d}X$, satisfying  Assumption \ref{SubGauss} with the same constant $C_1$, and obtain the covariance operator
\[\Sigma'=T_{> d}\Sigma T_{> d}=\sum_{k>d} \frac{\lambda_k}{\lambda_{d+1}-\lambda_{k}+x}P_k.
\]
Hence choosing
\begin{equation*}
x'=\frac{1}{C_3\lambda_1'}=\frac{x}{C_3\lambda_{d+1}},
\end{equation*}
the concentration inequality \eqref{EqKolLou}, applied to $\Delta'=T_{>d}\Delta T_{>d}$ and $x'$, gives
\begin{equation}\label{EqRightDev3}
\mathbb{P}( \|T_{> d}\Delta T_{> d}\|_\infty>1)\le \exp\bigg( -n\min\bigg(\frac{x^2}{C_3^2\lambda_{d+1}^2}, \frac{x}{C_3\lambda_{d+1}}\bigg) \bigg)
\end{equation}
in view of Condition \eqref{EqRightDev}. Combining \eqref{EqRightDev1}-\eqref{EqRightDev3}, the claim follows.

It remains to prove Lemma \ref{ertz}.
We have
\[
\lambda_1\big( (y-S)^{-1/2}(T-S)(y-S)^{-1/2}\big) \le 1
\]
if and only if (for a linear operator $L:\mathcal{H}\rightarrow \mathcal{H}$, we write $L\ge 0$ if $L$ is positive, i.e. if $\langle Lx,x\rangle\ge 0$ for all $x\in \mathcal{H}$)
\begin{equation*}
(y-S)^{-1/2}(y-T)(y-S)^{-1/2}=I-(y-S)^{-1/2}(T-S)(y-S)^{-1/2}\ge 0.
\end{equation*}
Since $(y-S)^{-1/2}$ is self-adjoint and strictly positive, this is the case if and only if $y-T\ge 0$, that is $\lambda_1(T)\le y$. A logical negation yields the assertion of the lemma.
\end{proof}

In view of the error decompositions \eqref{EqErrorDecR1} and \eqref{EqErrorDecR2}, we want to apply Proposition \ref{RightDev} with $x=(\lambda_j-\lambda_{d+1})/2$, $j\le d$. For this, we require
\begin{equation*}
\max\bigg(\frac{2C_3\lambda_{d+1}}{\lambda_j-\lambda_{d+1}},1\bigg)\sum_{k>d}\frac{\lambda_k}{\lambda_{j}-\lambda_k}\le n/(2C_3).
\end{equation*}
Simplifying the maximum yields:

\begin{corollary}\label{CorRightDev} Grant Assumption \ref{SubGauss} and let $j\le d$. Suppose that
\begin{equation}
\frac{\lambda_{j}}{\lambda_j-\lambda_{d+1}}\sum_{k>d}\frac{\lambda_k}{\lambda_{j}-\lambda_k}\le n/(4C_3^2).\label{CondCorRightDev}
\end{equation}
Then
\begin{equation}
\mathbb{P}\Big( \hat{\lambda}_{d+1}-\lambda_{d+1}> \frac{\lambda_j-\lambda_{d+1}}{2}\Big)
 \le \exp\Big(-\frac{n(\lambda_j-\lambda_{d+1})^2}{4C_3^2\lambda_{j}^2} \Big). \label{EqCorRightDev}
\end{equation}
\end{corollary}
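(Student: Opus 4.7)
The plan is to apply Theorem \ref{RightDev} with the specific choice $x=(\lambda_j-\lambda_{d+1})/2$, as suggested by the text immediately preceding the corollary, and to verify that condition \eqref{CondCorRightDev} is enough to imply the structural hypothesis \eqref{EqRightDev} of Theorem \ref{RightDev}. The key algebraic observation is that for $k>d$ one has $\lambda_k\le\lambda_{d+1}\le\lambda_j$, so that the shifted denominator in \eqref{EqRightDev} admits the symmetric rewriting
\[
\lambda_{d+1}-\lambda_k+x=\frac{\lambda_j+\lambda_{d+1}}{2}-\lambda_k\ge \frac{\lambda_j-\lambda_k}{2}.
\]
This is what replaces the "mixed" gap $\lambda_{d+1}-\lambda_k$ by the clean gap $\lambda_j-\lambda_k$ that appears in \eqref{CondCorRightDev}.

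Next I would handle the prefactor in \eqref{EqRightDev}. Since $C_3>1$ and $\lambda_j\ge \lambda_{d+1}$,
\[
\max\Big(\tfrac{C_3\lambda_{d+1}}{x},1\Big)=\max\Big(\tfrac{2C_3\lambda_{d+1}}{\lambda_j-\lambda_{d+1}},1\Big)\le \frac{2C_3\lambda_j}{\lambda_j-\lambda_{d+1}},
\]
because $\tfrac{2C_3\lambda_j}{\lambda_j-\lambda_{d+1}}\ge 2C_3\ge 1$ and dominates the first argument trivially. Multiplying the two displayed bounds shows that the left-hand side of \eqref{EqRightDev} is at most
\[
\frac{4C_3\lambda_j}{\lambda_j-\lambda_{d+1}}\sum_{k>d}\frac{\lambda_k}{\lambda_j-\lambda_k},
\]
and by \eqref{CondCorRightDev} this is $\le n/C_3$. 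Therefore Theorem \ref{RightDev} applies and yields the probability bound $\exp(-n\min(x^2/(C_3^2\lambda_{d+1}^2),x/(C_3\lambda_{d+1})))$.

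It remains to lower bound that minimum by the exponent appearing in \eqref{EqCorRightDev}. Plugging in $x=(\lambda_j-\lambda_{d+1})/2$, the first argument equals $(\lambda_j-\lambda_{d+1})^2/(4C_3^2\lambda_{d+1}^2)$, which dominates $(\lambda_j-\lambda_{d+1})^2/(4C_3^2\lambda_j^2)$ since $\lambda_{d+1}\le\lambda_j$. The second argument equals $(\lambda_j-\lambda_{d+1})/(2C_3\lambda_{d+1})$, and the desired inequality
\[
\frac{\lambda_j-\lambda_{d+1}}{2C_3\lambda_{d+1}}\ge \frac{(\lambda_j-\lambda_{d+1})^2}{4C_3^2\lambda_j^2}
\]
reduces to $2C_3\lambda_j^2\ge \lambda_{d+1}(\lambda_j-\lambda_{d+1})$, which holds crudely because $\lambda_j-\lambda_{d+1}\le \lambda_j$, $\lambda_{d+1}\le \lambda_j$ and $C_3>1$. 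This delivers \eqref{EqCorRightDev}. The only real obstacle is bookkeeping with the constants; there is no probabilistic work beyond invoking the theorem, provided the symmetric rewriting of the denominator is spotted at the outset.
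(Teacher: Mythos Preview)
Your proof is correct and follows essentially the same approach as the paper: apply Theorem~\ref{RightDev} with $x=(\lambda_j-\lambda_{d+1})/2$, bound the shifted denominator $\lambda_{d+1}-\lambda_k+x\ge(\lambda_j-\lambda_k)/2$, and simplify the maximum via $\max(2C_3\lambda_{d+1}/(\lambda_j-\lambda_{d+1}),1)\le 2C_3\lambda_j/(\lambda_j-\lambda_{d+1})$. The paper is simply terser, omitting the explicit verification that the minimum in the exponent dominates $(\lambda_j-\lambda_{d+1})^2/(4C_3^2\lambda_j^2)$, which you supply in full.
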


The corresponding left-deviation result for $\hat\lambda_{d}$ is as follows.

\begin{proposition}\label{LeftDev}  Grant Assumption \ref{SubGauss}. For all $x> 0$ satisfying
\begin{equation}\label{EqLeftDev}
\max\bigg(\frac{C_3\lambda_d}{x},1\bigg)\sum_{j\le d}\frac{\lambda_j}{\lambda_j-\lambda_d+x}\le n/C_3,
\end{equation}
we have
\begin{equation*}
\mathbb{P}\big( \hat{\lambda}_{d}-\lambda_{d}<-x\big)\le \exp\bigg( -n\min\bigg(\frac{x^2}{C_3^2\lambda_d^2}, \frac{x}{C_3\lambda_d}\bigg) \bigg),
\end{equation*}
where $C_3$ is the constant in \eqref{EqKolLou}.
\end{proposition}

\begin{proof}
First, we apply the max-min characterisation of eigenvalues and obtain
$\hat{\lambda}_{d}\ge \lambda_d(P_{\le d}\hat{\Sigma} P_{\le d})$.
This gives
\begin{equation}\label{EqLeftDev1}
\mathbb{P}( \hat{\lambda}_{d}-\lambda_{d}<-x)\le \mathbb{P}( \lambda_d(P_{\le d}\hat{\Sigma} P_{\le d})-\lambda_d(P_{\le d}\Sigma P_{\le d})<-x).
\end{equation}

Similar to Lemma \ref{ertz}, we have for self-adjoint, positive  operators $S,T$ on $V_d=\operatorname{span}(u_1,\dots,u_d)$  and $y<\lambda_d(S)$
\[
\lambda_1\big( (S-y)^{-1/2}(S-T)(S-y)^{-1/2}\big) \le 1
\]
if and only if (for the operator partial ordering)
\begin{equation*}
(S-y)^{-1/2}(T-y)(S-y)^{-1/2}=I-(S-y)^{-1/2}(S-T)(S-y)^{-1/2}\ge  0.
\end{equation*}
This is the case if and only if $T-y\ge 0$, that is $\lambda_d(T)\ge y$.

Applying the negation of this equivalence to $S=P_{\le d}\Sigma P_{\le d}$, $T=P_{\le d}\hat{\Sigma} P_{\le d}$, and $y=\lambda_d(S)-x=\lambda_{d}-x$, we get
\begin{equation}\label{EqLeftDev2}
\mathbb{P}( \lambda_d(P_{\le d}\hat{\Sigma} P_{\le d})-\lambda_d(P_{\le d}\Sigma P_{\le d})<- x)\le \mathbb{P}( \|T_{\le d}\Delta T_{\le d}\|_\infty>1)
\end{equation}
with
\begin{equation*}
T_{\le d}=\sum_{j\le d}\frac{1}{\sqrt{\lambda_j-\lambda_d+x}}P_j.
\end{equation*}
We consider $X'=T_{\le d}X$, satisfying  Assumption \ref{SubGauss} with the same constant $C_1$, and obtain the covariance operator
\[\Sigma'=T_{\le d}\Sigma T_{\le d}=\sum_{j\le d} \frac{\lambda_j}{\lambda_{j}-\lambda_{d}+x}P_j.
\]
Hence, noting that $\lambda\mapsto \lambda/(\lambda-\lambda_d+x)$ is decreasing for $\lambda\ge\lambda_d$, we choose
\begin{equation*}
x'=\frac{1}{C_3\lambda_1'}=\frac{x}{C_3\lambda_{d}}.
\end{equation*}
Then \eqref{EqKolLou}, applied to $\Delta'=T_{\le d}\Delta T_{\le d}$ and $x'$, gives
\begin{equation}\label{EqLeftDev3}
\mathbb{P}( \|T_{\le d}\Delta T_{\le d}\|_\infty>1)\le \exp\bigg( -n\min\bigg(\frac{x^2}{C_3^2\lambda_{d}^2}, \frac{x}{C_3\lambda_{d}}\bigg) \bigg)
\end{equation}
in view of Condition \eqref{EqLeftDev}. We conclude by \eqref{EqLeftDev1}-\eqref{EqLeftDev3}.
\end{proof}

In particular, choosing $x=(\lambda_d-\lambda_{k})/2$, we get:
\begin{corollary}\label{CorLeftDev} Grant Assumption \ref{SubGauss} and let $k> d$. Suppose that
\begin{equation}\label{CondCorLeftDev}
\frac{\lambda_d}{\lambda_d-\lambda_{k}}\sum_{j\le d}\frac{\lambda_j}{\lambda_j-\lambda_{k}}\le n/(4C_3^2).
\end{equation}
Then
\begin{equation}\label{EqCorLeftDev}
\mathbb{P}\big( \hat{\lambda}_{d}-\lambda_{d}<-(\lambda_d-\lambda_{k})/2\big) \le \exp\bigg( -\frac{n(\lambda_d-\lambda_{k})^2}{4C_3^2\lambda_d^2}\bigg) .
\end{equation}
\end{corollary}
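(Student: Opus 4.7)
The plan is to apply Theorem \ref{LeftDev} directly with the specific choice $x=(\lambda_d-\lambda_k)/2>0$ and then to verify that the hypothesis \eqref{CondCorLeftDev} of the corollary is strong enough to imply the hypothesis \eqref{EqLeftDev} of the theorem, and that the resulting exponential rate simplifies to the one claimed. This parallels exactly how Corollary \ref{CorRightDev} is deduced from Theorem \ref{RightDev}, so the work is mostly bookkeeping with the spectral gaps.

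First, I would simplify the maximum in \eqref{EqLeftDev}. Since $\lambda_k\ge 0$ we have $\lambda_d\ge\lambda_d-\lambda_k$, hence
\[
\frac{C_3\lambda_d}{x}=\frac{2C_3\lambda_d}{\lambda_d-\lambda_k}\ge 2C_3>1,
\]
so the maximum equals $2C_3\lambda_d/(\lambda_d-\lambda_k)$. Next I would control the denominators inside the sum: for $j\le d$,
\[
\lambda_j-\lambda_d+x=\lambda_j-\lambda_k-\tfrac{\lambda_d-\lambda_k}{2}\ge \tfrac{\lambda_j-\lambda_k}{2},
\]
where the inequality uses $\lambda_d\le\lambda_j$. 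Therefore the left-hand side of \eqref{EqLeftDev} is bounded by
\[
\frac{2C_3\lambda_d}{\lambda_d-\lambda_k}\sum_{j\le d}\frac{2\lambda_j}{\lambda_j-\lambda_k}=\frac{4C_3\lambda_d}{\lambda_d-\lambda_k}\sum_{j\le d}\frac{\lambda_j}{\lambda_j-\lambda_k},
\]
which under \eqref{CondCorLeftDev} is at most $4C_3\cdot n/(4C_3^2)=n/C_3$. Hence the hypothesis of Theorem \ref{LeftDev} is satisfied for this value of $x$.

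Finally I would simplify the exponential rate. With $x=(\lambda_d-\lambda_k)/2$ we have $x/(C_3\lambda_d)\le 1/(2C_3)\le 1$ (recall $C_3>1$), so
\[
\min\!\bigg(\frac{x^2}{C_3^2\lambda_d^2},\frac{x}{C_3\lambda_d}\bigg)=\frac{x^2}{C_3^2\lambda_d^2}=\frac{(\lambda_d-\lambda_k)^2}{4C_3^2\lambda_d^2}.
\]
Theorem \ref{LeftDev} then yields \eqref{EqCorLeftDev} (in fact without the innocuous factor $2$). Since this corollary is a direct specialisation of Theorem \ref{LeftDev}, there is no real obstacle; the only delicate point is the elementary chain of inequalities relating $\lambda_j-\lambda_d+x$ to $\lambda_j-\lambda_k$, which uses the ordering $\lambda_j\ge\lambda_d>\lambda_k$ in a mild way.
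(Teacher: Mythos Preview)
Your proof is correct and follows exactly the approach indicated in the paper, which simply states that the corollary follows from Theorem \ref{LeftDev} with the choice $x=(\lambda_d-\lambda_k)/2$. You have carefully filled in the elementary bookkeeping the paper leaves implicit, and your observation that the factor $2$ is not actually needed is also correct.
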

\begin{remark}\label{RemRelativeEVCond}
Let us consider the important special case $j=s=d$, $k=d+1$, and $\mu=\lambda_{d+1}$. Then all three conditions in Lemma \ref{LemWeightedCov} and Corollaries \ref{CorRightDev}, \ref{CorLeftDev} are implied by
\begin{equation}\label{EqLocalCond}
\frac{\lambda_d}{\lambda_d-\lambda_{d+1}}\bigg( \sum_{j\le d}\frac{\lambda_j}{\lambda_j-\lambda_{d+1}}+\sum_{k>d}\frac{\lambda_k}{\lambda_d-\lambda_{k}}\bigg) \le n/(16C_3^2).
\end{equation}
In particular, if \eqref{EqLocalCond} holds, then all events in the remainder terms in Propositions \ref{ErrorDecR} and \ref{ErrorDecL} occur with small probability.

The localised analysis of this section can be compared to the following absolute one. All events considered in Lemma \ref{LemWeightedCov} and Corollaries \ref{CorRightDev}, \ref{CorLeftDev} are contained in $\{\|\Delta\|_{\infty}>(\lambda_d-\lambda_{d+1})/4\}$ and by \eqref{EqKolLou} this occurs with small probability if
\begin{equation}\label{EqAbsoluteCond}
\frac{\lambda_1\tr(\Sigma)}{(\lambda_d-\lambda_{d+1})^2}\le n/(16C_3^2).
\end{equation}
Note that the condition that $\|\Delta\|_\infty$ is small relative to certain spectral gaps, here $\lambda_d-\lambda_{d+1}$, is often encountered in perturbation theory, see e.g. \cite[Theorem VII.3.1]{B97}, \cite[Theorems 5.1.4 and 5.1.8]{HE15}, and \cite[Lemma 1]{KL15a}. Many of our mathematical issues arise from showing that Condition \eqref{EqAbsoluteCond} can be replaced by the localised version in \eqref{EqLocalCond}.
\end{remark}

\begin{remark}
Our concentration inequalities rely on Assumption \ref{SubGauss}. Generalisations are possible under weaker moment assumptions, including $\sup_{j\ge 1}\mathbb{E}[|\lambda_j^{-1/2}\langle X, u_j\rangle|^k]<\infty$ for some $k> 4$. Since the latter seemingly leads to stronger eigenvalue conditions than formulated in Lemma \ref{LemWeightedCov} and Corollaries \ref{CorRightDev}, \ref{CorLeftDev}, such generalisations are not pursued here.
\end{remark}

\section{Proofs}\label{SecProofs}
We now provide the proofs of the results in Section \ref{SecNewBounds} by combining the error decompositions of Section \ref{SecErrorDec} with the concentration inequalities of Section \ref{SecConcIneq}.

\subsection{Proof of Lemma \ref{SpectralDec}}
Inserting the spectral representation of $\Sigma$, the excess risk can be written as
\begin{equation*}
{\mathcal E}^{PCA}_d=\langle\Sigma, P_{\le d}-\hat{P}_{\le d}\rangle=\sum_{j\ge 1} \lambda_j\langle P_j, P_{\le d}-\hat{P}_{\le d}\rangle.
\end{equation*}
By $P_{\le d}-\hat{P}_{\le d}=\hat{P}_{> d}-P_{> d}$ we obtain
\begin{align*}
{\mathcal E}^{PCA}_d&=\sum_{j\le d}\lambda_j\langle P_j,\hat{P}_{> d}-P_{>d}\rangle-\sum_{k> d} \lambda_{k}\langle P_k,\hat{P}_{\le d}-P_{\le d}\rangle\nonumber\\
&=\sum_{j\le d}\lambda_j\langle P_j,\hat{P}_{> d}\rangle-\sum_{k>d} \lambda_{k}\langle P_k,\hat{P}_{\le d}\rangle.
\end{align*}
Moreover, the identity
\[
\langle P_{\le d},\hat{P}_{> d}\rangle =\langle P_{\le d},\hat{P}_{> d}-P_{>d}\rangle=-\langle P_{> d},P_{\le d}-\hat{P}_{\le d}\rangle=\langle P_{> d},\hat{P}_{\le d}\rangle
\]
implies $\sum_{j\le d}\mu\scapro{P_j}{\hat P_{> d}}=\sum_{k>d} \mu\scapro{P_k}{\hat P_{\le d}}$ and thus
\begin{align*}
{\mathcal E}^{PCA}_d=\sum_{j\le d}(\lambda_j-\mu)\langle P_j,\hat{P}_{> d}\rangle
+\sum_{k>d}(\mu-\lambda_{k})\langle P_k,\hat{P}_{\le d}\rangle.
\end{align*}
The claim now follows from inserting the identities $\langle P_j,\hat{P}_{>d}\rangle=\Vert P_j\hat{P}_{>d}\Vert_2^2$ and $\langle P_k,\hat{P}_{\le d}\rangle=\Vert P_k\hat{P}_{\le d}\Vert_2^2$.\qed

\subsection{Proof of Proposition \ref{PropMainRes1}}
Taking expectation in \eqref{EqErrorDecR1} and using $\mathbb{E}[ \Vert P_{j}\Delta \hat{P}_{> d} \Vert_2^2] \le \mathbb{E}[ \Vert P_{j}\Delta \Vert_2^2] \le 2C_2\lambda_j\operatorname{tr}(\Sigma)/n$, we get
\begin{align*}
\mathbb{E}\left[{\mathcal E}^{PCA}_{\le d}(\mu)\right]
&\le 8C_2\sum_{j\le r}(\lambda_j-\mu)\frac{\lambda_j\tr (\Sigma) }{n(\lambda_j-\lambda_{d+1})^2}+\sum_{j=r+1}^{d\wedge (r+p-d)}(\lambda_j-\mu)\\
&+ \sum_{j\le r}(\lambda_j-\mu) \mathbb{P}( \hat{\lambda}_{d+1}-\lambda_{d+1} > (\lambda_j-\lambda_{d+1})/2).
\end{align*}
Hence, the first inequality follows from the following lemma.

\begin{lemma}\label{Lem1Thm1} Let $j\le d$. Then
\begin{align*}
&8C_2\frac{\lambda_j\tr (\Sigma) }{n(\lambda_j-\lambda_{d+1})^2}+\mathbb{P}( \hat{\lambda}_{d+1}-\lambda_{d+1} > (\lambda_j-\lambda_{d+1})/2)\\
&\le (8C_2+4C_3^2)\frac{\lambda_j\tr (\Sigma) }{n(\lambda_j-\lambda_{d+1})^2}.
\end{align*}
\end{lemma}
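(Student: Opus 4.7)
The inequality reduces to showing that the probability term is bounded by $4C_3^2 \lambda_j \tr(\Sigma)/(n(\lambda_j-\lambda_{d+1})^2)$. My plan is to dichotomize on whether the condition \eqref{CondCorRightDev} of Corollary \ref{CorRightDev} (with the index $j$) holds. If $\lambda_j = \lambda_{d+1}$, the right-hand side is $+\infty$ and the statement is trivial, so throughout we may assume $\lambda_j > \lambda_{d+1}$.

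\textbf{Case 1: Condition \eqref{CondCorRightDev} holds.} Then \eqref{EqCorRightDev} yields
\[
\mathbb{P}\bigl( \hat{\lambda}_{d+1}-\lambda_{d+1} > (\lambda_j-\lambda_{d+1})/2 \bigr) \le \exp\Bigl( -\frac{n(\lambda_j-\lambda_{d+1})^2}{4C_3^2 \lambda_j^2}\Bigr).
\]
Applying the elementary inequality $e^{-t} \le 1/t$ for $t>0$ and using $\lambda_j \le \tr(\Sigma)$,
\[
\exp\Bigl( -\frac{n(\lambda_j-\lambda_{d+1})^2}{4C_3^2 \lambda_j^2}\Bigr) \le \frac{4C_3^2 \lambda_j^2}{n(\lambda_j-\lambda_{d+1})^2} \le \frac{4C_3^2 \lambda_j \tr(\Sigma)}{n(\lambda_j-\lambda_{d+1})^2},
\]
which gives exactly what is needed.

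\textbf{Case 2: Condition \eqref{CondCorRightDev} fails.} We use the trivial bound $\mathbb{P}(\cdot)\le 1$ and show directly that in this case the right-hand side already exceeds $1$. The failure of \eqref{CondCorRightDev} reads
\[
\frac{\lambda_j}{\lambda_j-\lambda_{d+1}} \sum_{k>d}\frac{\lambda_k}{\lambda_j-\lambda_k} > \frac{n}{4C_3^2}.
\]
For $k>d$ we have $\lambda_j-\lambda_k \ge \lambda_j-\lambda_{d+1}$, hence
\[
\sum_{k>d}\frac{\lambda_k}{\lambda_j-\lambda_k} \le \frac{1}{\lambda_j-\lambda_{d+1}} \sum_{k>d} \lambda_k \le \frac{\tr(\Sigma)}{\lambda_j-\lambda_{d+1}}.
\]
Combining the last two inequalities yields
\[
\frac{n}{4C_3^2} < \frac{\lambda_j \tr(\Sigma)}{(\lambda_j-\lambda_{d+1})^2}, \qquad \text{i.e.,} \qquad 1 < \frac{4C_3^2 \lambda_j \tr(\Sigma)}{n(\lambda_j-\lambda_{d+1})^2}.
\]

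In both cases the bound on the probability is $\le 4C_3^2\lambda_j \tr(\Sigma)/(n(\lambda_j-\lambda_{d+1})^2)$; adding $8C_2\lambda_j\tr(\Sigma)/(n(\lambda_j-\lambda_{d+1})^2)$ gives the claim. The only non-routine step is the estimation in Case 2; the idea of replacing the denominators $\lambda_j-\lambda_k$ by the larger gap $\lambda_j-\lambda_{d+1}$ and using $\sum_{k>d}\lambda_k \le \tr(\Sigma)$ is precisely what converts the assumption-failure into the required lower bound on the deterministic term. No serious obstacle is expected.
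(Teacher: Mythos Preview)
Your proof is correct and follows essentially the same two-case strategy as the paper. The only cosmetic difference is the dichotomy point: the paper splits according to whether $\lambda_j\tr(\Sigma)/(n(\lambda_j-\lambda_{d+1})^2)\le 1/(4C_3^2)$ (from which \eqref{CondCorRightDev} follows), whereas you split directly on whether \eqref{CondCorRightDev} holds and then, in the failure case, derive the same bound on that quantity via $\lambda_j-\lambda_k\ge \lambda_j-\lambda_{d+1}$; the elementary estimates $e^{-t}\le 1/t$ versus $te^{-t}\le 1/2$ are interchangeable here.
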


\begin{proof}[Proof of Lemma \ref{Lem1Thm1}]
If
\begin{equation}\label{eq:dscases}
\frac{\lambda_j\tr (\Sigma) }{n(\lambda_j-\lambda_{d+1})^2}\le 1/(4C_3^3),
\end{equation}
then Condition \eqref{CondCorRightDev} is satisfied and we can apply \eqref{EqCorRightDev}. Thus, in this case, the left-hand side can be bounded by
\[
8C_2\frac{\lambda_j\tr (\Sigma) }{n(\lambda_j-\lambda_{d+1})^2}+\exp\bigg( -\frac{n(\lambda_j-\lambda_{d+1})^2}{4C_3^2\lambda_j^2}\bigg)\le (8C_2+2C_3^2)\frac{\lambda_j\tr (\Sigma) }{n(\lambda_j-\lambda_{d+1})^2},
\]
where the inequality follows from $x\exp(-x)\le 1/e\le 1/2$, $x\ge 0$. On the other hand, if \eqref{eq:dscases} is not satisfied, then the left-hand side can be bounded by
\[
8C_2\frac{\lambda_j\tr (\Sigma) }{n(\lambda_j-\lambda_{d+1})^2}+1\le (8C_2+4C_3^2)\frac{\lambda_j\tr (\Sigma) }{n(\lambda_j-\lambda_{d+1})^2}.
\]
Hence, we get the claim in both cases.
\end{proof}
It remains to prove the second inequality. Taking expectation in \eqref{EqErrorDecL1} and using $\mathbb{E}[ \Vert P_{k}\Delta \hat{P}_{\le d}\Vert_2^2] \le \mathbb{E}[ \Vert P_{k}\Delta \Vert_2^2] \le 2C_2\lambda_k\tr (\Sigma)/n $, we get
\begin{align*}
\mathbb{E}\left[ {\mathcal E}^{PCA}_{> d}(\mu)\right]
&\le 8C_2\sum_{k\ge  l}(\mu-\lambda_k)\frac{\lambda_k\tr (\Sigma) }{n(\lambda_d-\lambda_k)^2}+\sum_{k=(d+1)\vee(l-d)}^{l-1}(\mu-\lambda_k)\\
&+\sum_{\substack{k\ge  l:\\ \lambda_k\ge \lambda_d/2}}(\mu-\lambda_k) \mathbb{P}( \hat{\lambda}_{d}-\lambda_{d}<-(\lambda_d-\lambda_k)/2)\\
&+d(\mu-\lambda_p) \mathbb{P}(\hat{\lambda}_d-\lambda_d< -\lambda_d/4).
\end{align*}
Consider the last term. Since $d\le n/(16C_3^2)$, Condition \eqref{EqLeftDev} is satisfied with $x=\lambda_d/4$  and we obtain
\begin{equation*}
\mathbb{P}( \hat{\lambda}_d-\lambda_d< -\lambda_d/4) \le  e^{ -\frac{n}{16C_3^2}}.
\end{equation*}
Using this and the bounds $x\exp(-x)\le (2/e)\exp(-x/2)\le \exp(-x/2)$, $x\ge 0$, and $d\le n/(16C_3^2)$, we see that
\begin{equation*}
d \mathbb{P}( \hat{\lambda}_d-\lambda_d< -\lambda_d/4)\le de^{ -\frac{n}{16C_3^2}}\le e^{ -\frac{n}{32C_3^2}}.
\end{equation*}
This gives the desired bound for the last term. Now, the second inequality follows from the lemma below.
\begin{lemma}\label{Lem1Thm2} Let $k>d$ be such that $\lambda_k\ge \lambda_d/2$. Then
\begin{align*}
&8C_2\frac{\lambda_k\tr (\Sigma) }{n(\lambda_d-\lambda_k)^2}+\mathbb{P}( \hat{\lambda}_{d}-\lambda_{d}<-(\lambda_d-\lambda_k)/2)\\
&\le (8C_2+8C_3^2)\frac{\lambda_k\tr (\Sigma) }{n(\lambda_d-\lambda_k)^2}.
\end{align*}
\end{lemma}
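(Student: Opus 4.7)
The proof will closely parallel that of Lemma \ref{Lem1Thm1}, with Corollary \ref{CorLeftDev} playing the role of Corollary \ref{CorRightDev}. The assumption $\lambda_k\ge \lambda_d/2$ is what allows the $\lambda_d$'s arising from the Gaussian-type tail bound to be converted into factors of $\lambda_k$ up to a constant, which is the only substantive new point beyond the symmetric translation.

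As a preliminary step, I would verify that the hypothesis \eqref{CondCorLeftDev} of Corollary \ref{CorLeftDev} is implied by a clean trace condition of the form
\[
\lambda_k\tr(\Sigma)/\bigl(n(\lambda_d-\lambda_k)^2\bigr)\le 1/(8C_3^2).
\]
This reduction uses $\lambda_j-\lambda_k\ge \lambda_d-\lambda_k$ for $j\le d$, which gives $\sum_{j\le d}\lambda_j/(\lambda_j-\lambda_k)\le \tr(\Sigma)/(\lambda_d-\lambda_k)$, together with $\lambda_d\le 2\lambda_k$ to absorb the prefactor $\lambda_d/(\lambda_d-\lambda_k)$ in \eqref{CondCorLeftDev}.

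Next, I would split into two cases depending on whether this simplified condition holds, exactly as in Lemma \ref{Lem1Thm1}. In the favourable case, \eqref{EqCorLeftDev} provides the tail bound $2\exp(-n(\lambda_d-\lambda_k)^2/(4C_3^2\lambda_d^2))$, and the standard trick $2xe^{-x}\le 1$ (with $x=n(\lambda_d-\lambda_k)^2/(4C_3^2\lambda_d^2)$) turns this into the polynomial bound $4C_3^2\lambda_d^2/(n(\lambda_d-\lambda_k)^2)$. Using $\lambda_d\le 2\lambda_k$ on one factor of $\lambda_d$ and the trivial $\lambda_d\le \tr(\Sigma)$ on the other then absorbs $\lambda_d^2$ into $\lambda_k\tr(\Sigma)$, yielding the bound $8C_3^2\lambda_k\tr(\Sigma)/(n(\lambda_d-\lambda_k)^2)$. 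In the complementary case the simplified condition fails, so the probability is bounded trivially by $1$, which is in turn dominated by $8C_3^2\lambda_k\tr(\Sigma)/(n(\lambda_d-\lambda_k)^2)$. Adding the $8C_2$ term yields the constant $8C_2+8C_3^2$ in both cases.

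The only step that is not a routine translation of the earlier proof is the conversion of the Gaussian-tail denominator $\lambda_d^2$ into $\lambda_k\tr(\Sigma)$; this is exactly where the hypothesis $\lambda_k\ge\lambda_d/2$ is essential. Without it, an extra factor of $\lambda_d/\lambda_k$ would have to be carried through, and the clean constant $8C_2+8C_3^2$ could not be attained.
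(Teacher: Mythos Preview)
Your proposal is correct and follows essentially the same approach as the paper: a two-case split according to whether a trace-type condition holds, application of Corollary \ref{CorLeftDev} in the favourable case together with $xe^{-x}\lesssim 1$, and the trivial bound $\mathbb{P}\le 1$ otherwise, with the hypothesis $\lambda_k\ge\lambda_d/2$ used to convert factors of $\lambda_d$ into $\lambda_k$. The only cosmetic difference is that the paper formulates the threshold as $\lambda_d\tr(\Sigma)/(n(\lambda_d-\lambda_k)^2)\le 1/(4C_3^2)$ (with $\lambda_d$ in the numerator) rather than your $\lambda_k\tr(\Sigma)/(n(\lambda_d-\lambda_k)^2)\le 1/(8C_3^2)$; this shifts where the inequality $\lambda_d\le 2\lambda_k$ is invoked but leads to the same final constant $8C_2+8C_3^2$.
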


\begin{proof}[Proof of Lemma \ref{Lem1Thm2}]
If
\begin{equation}\label{eq:ikjcases}
\frac{\lambda_d\tr (\Sigma) }{n(\lambda_d-\lambda_k)^2}\le 1/(4C_3^2),
\end{equation}
then Condition \eqref{CondCorLeftDev} is satisfied and we can apply \eqref{EqCorLeftDev}. Thus, in this case, the left-hand side can be bounded by
\[
8C_2\frac{\lambda_k\tr (\Sigma) }{n(\lambda_d-\lambda_k)^2}+\exp\bigg( -\frac{n(\lambda_d-\lambda_k)^2}{4C_3^2\lambda_d^2}\bigg).
\]
Using the inequality $x\exp(-x)\le 1/e\le 1/2$, $x\ge 0$, this is bounded by
\[
8C_2\frac{\lambda_k\tr (\Sigma) }{n(\lambda_d-\lambda_k)^2}+\frac{2C_3^2\lambda_d^2}{n(\lambda_d-\lambda_k)^2}\le (8C_2+4C_3^2)\frac{\lambda_k\tr (\Sigma) }{n(\lambda_d-\lambda_k)^2},
\]
where we applied the bound $\lambda_d/2\le \lambda_k$ in the second inequality.
On the other hand, if \eqref{eq:ikjcases} is not satisfied, then the left-hand side can be bounded by
\[
8C_2\frac{\lambda_k\tr (\Sigma) }{n(\lambda_d-\lambda_k)^2}+1\le (8C_2+8C_3^2)\frac{\lambda_k\tr (\Sigma) }{n(\lambda_d-\lambda_k)^2},
\]
where we again applied the bound $\lambda_d/2\le \lambda_k$. Hence, we get the claim in both cases.
\end{proof}

\subsection{Proof of Corollary \ref{MainRes1Var}}
The claim follows from Proposition \ref{PropMainRes1} together with the facts that for $\mu\in [\lambda_{d+1},\lambda_d]$ the terms $\lambda_j-\mu$ (resp. $\mu-\lambda_k$) can be upper bounded by $\lambda_j-\lambda_{d+1}$ (resp. $\lambda_d-\lambda_k$) and that $\lambda\mapsto \lambda/(\lambda-\lambda_{d+1})^2$ is decreasing for $\lambda>\lambda_{d+1}$ (resp. $\lambda\mapsto \lambda/(\lambda_d-\lambda)^2$ is increasing for $\lambda<\lambda_{d}$).\qed

\subsection{Proof of Theorem \ref{ThmGlobalLocalBound}}
In Corollary \ref{MainRes1Var}, only summands with $\lambda_j>\lambda_{d+1}$ and $\lambda_k<\lambda_d$, respectively, appear. Neglecting the minimum with $\lambda_j-\lambda_{d+1}$ (resp. $\lambda_d-\lambda_k$) in each summand, the local bound follows.

For the global bound use the inequality $\min(a/x,x)\le\sqrt{a}$ for $a,x\ge 0$ to obtain from Corollary \ref{MainRes1Var}
\begin{equation*}
\mathbb{E}[{\mathcal E}^{PCA}_{\le d}(\mu)]\le \sum_{j\le d}\sqrt{\frac{C\lambda_j\operatorname{tr}(\Sigma)}{n}}.
\end{equation*}
Considering ${\mathcal E}^{PCA}_{>d}(\mu)$, the value $l$ in Proposition \ref{PropMainRes1} has to be chosen carefully.  For $a>0$ let $d< l=l(a)\le p+1$ be the index such that $\lambda_d-\lambda_k\ge a$ for $k\ge l$ and $\lambda_d-\lambda_k< a$ for $d<k< l$. Then the second inequality of Proposition \ref{PropMainRes1} and the inequality $\mu\le\lambda_{d}$ imply 
\begin{equation*}
\mathbb{E}[{\mathcal E}^{PCA}_{> d}(\mu)]\le \sum_{k>d}\frac{C\lambda_k\operatorname{tr}(\Sigma)}{na}+da+\lambda_de^{ -\frac{n}{32C_3^2}}.
\end{equation*}
Minimizing over $a>0$ and incorporating the remainder in the summand for $j=d$ gives the global bound in \eqref{EqGlBound}.
\qed

\subsection{Proof of Proposition \ref{MainRes2}}\label{ProofThm3}
We begin with the following extension of Lemma \ref{Lem1Thm1}.  
\begin{lemma}\label{lemma1mthm3}
Let $j\le s\le d$. Then
\begin{align}
&16C_2\frac{\lambda_j\operatorname{tr}_{>s}(\Sigma)}{n(\lambda_j-\lambda_{d+1})^2}+ 2\mathbb{P}( \hat{\lambda}_{d+1}-\lambda_{d+1}> (\lambda_j-\lambda_{d+1})/2)\label{eq:proofmthm31}\\
& \le (16C_2+8C_3^2)\frac{\lambda_j\operatorname{tr}_{>s}(\Sigma)}{n(\lambda_j-\lambda_{d+1})^2}\nonumber.
\end{align}
\end{lemma}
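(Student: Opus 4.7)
Set $A := \lambda_j\tr_{>s}(\Sigma)/\bigl(n(\lambda_j-\lambda_{d+1})^2\bigr)$. After cancelling $16C_2 A$ from both sides, the lemma reduces to showing
\[
2\,\mathbb{P}\bigl(\hat\lambda_{d+1}-\lambda_{d+1}> (\lambda_j-\lambda_{d+1})/2\bigr) \;\le\; 8C_3^2\, A.
\]
I would mimic the two-case structure of the proof of Lemma \ref{Lem1Thm1}. If $A\ge 1/(4C_3^2)$ then $8C_3^2 A\ge 2\ge 2\mathbb{P}(\cdots)$ and there is nothing to prove; the content lies in the regime $A<1/(4C_3^2)$.

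In that regime I would apply Theorem \ref{RightDev} \emph{directly}, rather than its weaker consequence Corollary \ref{CorRightDev}. With $x=(\lambda_j-\lambda_{d+1})/2$ and using $\lambda_{d+1}-\lambda_k\ge 0$ for $k>d$ together with $s\le d$, the sum in \eqref{EqRightDev} is controlled by
\[
\sum_{k>d}\frac{\lambda_k}{\lambda_{d+1}-\lambda_k+x}\;\le\;\frac{2\tr_{>d}(\Sigma)}{\lambda_j-\lambda_{d+1}}\;\le\;\frac{2\tr_{>s}(\Sigma)}{\lambda_j-\lambda_{d+1}}.
\]
Depending on which branch of $\max(C_3\lambda_{d+1}/x,1)$ is active, hypothesis \eqref{EqRightDev} reduces to either $4C_3\lambda_{d+1}\tr_{>s}(\Sigma)\le n(\lambda_j-\lambda_{d+1})^2/C_3$ or $2\tr_{>s}(\Sigma)/(\lambda_j-\lambda_{d+1})\le n/C_3$; both are implied by $A<1/(4C_3^2)$ after inserting the trivial estimates $\lambda_{d+1}\le\lambda_j$ and $\lambda_j-\lambda_{d+1}\le\lambda_j$.

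The conclusion of Theorem \ref{RightDev} then gives $\mathbb{P}\le\exp\bigl(-n\min(T_1,T_2)\bigr)$ with $T_1=(\lambda_j-\lambda_{d+1})^2/(4C_3^2\lambda_{d+1}^2)$ and $T_2=(\lambda_j-\lambda_{d+1})/(2C_3\lambda_{d+1})$. I would apply $ue^{-u}\le 1/e$ separately in the two sub-cases selected by $\min(T_1,T_2)$. If $\lambda_j\le(1+2C_3)\lambda_{d+1}$ then $\min=T_1$ and the bound becomes $2\mathbb{P}\le 8C_3^2\lambda_{d+1}^2/\bigl(en(\lambda_j-\lambda_{d+1})^2\bigr)$, which meets the target $8C_3^2 A$ as soon as $\lambda_{d+1}^2\le\lambda_j\tr_{>s}(\Sigma)$. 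Otherwise $\min=T_2$ and the bound becomes $2\mathbb{P}\le 4C_3\lambda_{d+1}/\bigl(en(\lambda_j-\lambda_{d+1})\bigr)$, which reduces to $(\lambda_j-\lambda_{d+1})\lambda_{d+1}\le 2eC_3\,\lambda_j\tr_{>s}(\Sigma)$. In each case the two ingredients $\tr_{>s}(\Sigma)\ge\lambda_{d+1}$ (which holds because $d+1>s$) and $\lambda_j-\lambda_{d+1}\le\lambda_j\le\lambda_j$ suffice.

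\textbf{Main obstacle.} The naive transcription of the Lemma \ref{Lem1Thm1} argument fails because the inequality $\lambda_j\le\tr(\Sigma)$ used there does \emph{not} hold with $\tr(\Sigma)$ replaced by $\tr_{>s}(\Sigma)$ when $j\le s$: applying Corollary \ref{CorRightDev} (whose exponent carries $\lambda_j^2$) would leave a residual $\lambda_j/e$ that one cannot bound by $\tr_{>s}(\Sigma)$. The resolution is to keep the sharper form of Theorem \ref{RightDev}, whose exponent carries $\lambda_{d+1}$ rather than $\lambda_j$; the resulting algebraic remainder is of size $\lambda_{d+1}^2$, which \emph{can} be matched against $\lambda_j\tr_{>s}(\Sigma)$ via the cheap lower bound $\tr_{>s}(\Sigma)\ge\lambda_{d+1}$.
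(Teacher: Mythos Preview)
Your proposal is correct and follows essentially the same approach as the paper: both split into the trivial case $A\ge 1/(4C_3^2)$ and otherwise invoke Theorem~\ref{RightDev} (not merely Corollary~\ref{CorRightDev}) so that the exponent carries $\lambda_{d+1}$ rather than $\lambda_j$, and then close via $\tr_{>s}(\Sigma)\ge\lambda_{d+1}$. The only cosmetic difference is that in the $T_2$ branch the paper uses $x^2 e^{-x}\le 4/e^2\le 1$ (yielding a $1/n^2$ factor that is then crudely bounded by $1/n$), whereas you use $xe^{-x}\le 1/e$ together with $C_3>1$; both routes land on the same inequality.
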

\begin{proof}[Proof of Lemma \ref{lemma1mthm3}]
If
\begin{equation}\label{eq:cond1}
\frac{\lambda_j\operatorname{tr}_{>s}(\Sigma)}{n(\lambda_j-\lambda_{d+1})^2}\le 1/(4C_3^2),
\end{equation}
then the Condition \eqref{CondCorRightDev} is satisfied and the left-hand side of \eqref{eq:proofmthm31} can be bounded by
\begin{align*}
&16C_2\frac{\lambda_j\operatorname{tr}_{>s}(\Sigma)}{n(\lambda_j-\lambda_{d+1})^2}+2\exp\bigg( -n\min\bigg( \frac{(\lambda_j-\lambda_{d+1})^2}{4C_3^2\lambda_{d+1}^2},\frac{\lambda_j-\lambda_{d+1}}{2C_3\lambda_{d+1}}\bigg) \bigg)\\
&\le (16C_2+8C_3^2)\frac{\lambda_j\operatorname{tr}_{>s}(\Sigma)}{n(\lambda_j-\lambda_{d+1})^2},
\end{align*}
where we used the bounds $x\exp(-x)\le 1/e\le 1/2$ and $x^2\exp(-x)\le 4/e^2\le 1$.
On the other hand, if \eqref{eq:cond1} is not satisfied, then
\eqref{eq:proofmthm31} can be bounded by
\[
16C_2\frac{\lambda_j\operatorname{tr}_{>s}(\Sigma)}{n(\lambda_j-\lambda_{d+1})^2}+2\le (16C_2+8C_3^2)\frac{\lambda_j\operatorname{tr}_{>s}(\Sigma)}{n(\lambda_j-\lambda_{d+1})^2}.
\]
This completes the proof.
\end{proof}
Taking expectation in \eqref{EqErrorDecR2} with $\mu=\lambda_{d+1}$ and using Lemma \ref{lemma1mthm3}, we obtain
\begin{align*}
\mathbb{E}\left[ {\mathcal E}^{PCA}_{\le d}(\lambda_{d+1})\right]
&\le  (16C_2+8C_3^2)\sum_{j\le r}\frac{\lambda_j\tr_{>s}(\Sigma)}{n(\lambda_j-\lambda_{d+1})} +2\sum_{r<j\le d}(\lambda_j-\lambda_{d+1})\\
&+8\mathbb{E}\Bigg[ \sum_{j\le r}\frac{\Vert P_{j}\Delta \Vert_2^2}{\lambda_j-\lambda_{d+1}} \mathbbm{1}\big(\| S_{\le s}\Delta S_{\le s}\|_\infty > 1/4\big)\Bigg].
\end{align*}
If Condition \eqref{EqThm3Cond} holds, then Lemma \ref{LemWeightedCov} with $\mu=\lambda_{d+1}$ gives
\begin{equation}\label{eq:kl1}
\mathbb{P}\left( \|S_{\le s}\Delta S_{\le s}\|_\infty > 1/4\right) \le \exp\bigg( -\frac{n(\lambda_{s}-\lambda_{d+1})^2}{16C_3^2\lambda^2_{s}}\bigg).
\end{equation}
Thus the claim follows from applying the Cauchy-Schwarz inequality, \eqref{eq:kl1}, and the following  lemma:
\begin{lemma}\label{lemma2mthm3}
For all $r\le d$, we have
\[
\Bigg( \mathbb{E}\Bigg[ \Bigg( \sum_{j\le r}\frac{\Vert P_{j}\Delta \Vert_2^2}{\lambda_j-\lambda_{d+1}}\Bigg)^2\Bigg]\Bigg) ^{1/2} \le  128C_1\sum_{j\le r}\frac{\lambda_j\tr (\Sigma) }{n(\lambda_j-\lambda_{d+1})}.
\]
\end{lemma}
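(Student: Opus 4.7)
The plan is to apply Minkowski's inequality in $L^2(\Omega)$ twice, reducing the claim to a fourth-moment bound for the rank-one pieces $P_j \Delta P_k$, which is then settled by a Rosenthal-type inequality combined with the sub-Gaussian Assumption \ref{SubGauss}.

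First, regarding $\sum_{j\le r}\|P_j\Delta\|_2^2/(\lambda_j-\lambda_{d+1})$ as a sum of non-negative random variables indexed by $j$, the $L^2$ triangle inequality gives
\[
\Bigg(\E\Bigg[\Bigg(\sum_{j\le r}\frac{\|P_j\Delta\|_2^2}{\lambda_j-\lambda_{d+1}}\Bigg)^{2}\Bigg]\Bigg)^{1/2}
\le \sum_{j\le r}\frac{\E[\|P_j\Delta\|_2^4]^{1/2}}{\lambda_j-\lambda_{d+1}}.
\]
It therefore suffices to prove that for each $j$,
\begin{equation}\label{EqGoal4thMom}
\E[\|P_j\Delta\|_2^4]^{1/2} \le C\,\frac{\lambda_j\tr(\Sigma)}{n}.
\end{equation}

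To establish \eqref{EqGoal4thMom}, I would decompose orthogonally using $\sum_k P_k = I$, writing $\|P_j\Delta\|_2^2 = \sum_{k\ge 1}\|P_j\Delta P_k\|_2^2$, and apply Minkowski once more:
\[
\E[\|P_j\Delta\|_2^4]^{1/2}
= \bnorm{\sum_k \|P_j\Delta P_k\|_2^2}_{L^2(\PP)}
\le \sum_k \E\bigl[\|P_j\Delta P_k\|_2^4\bigr]^{1/2}.
\]
Since $P_j=u_j\otimes u_j$ and $P_k=u_k\otimes u_k$ are rank one, one has $\|P_j\Delta P_k\|_2^2 = \langle u_j,\Delta u_k\rangle^2$, and
\[
\langle u_j,\Delta u_k\rangle = \lambda_j\delta_{jk} - \frac{1}{n}\sum_{i=1}^n \langle u_j,X_i\rangle\langle u_k,X_i\rangle = -\frac{1}{n}\sum_{i=1}^n W_i^{(jk)},
\]
where $W_i^{(jk)} := \langle u_j,X_i\rangle\langle u_k,X_i\rangle - \E[\langle u_j,X\rangle\langle u_k,X\rangle]$ are i.i.d. mean-zero. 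Thus the problem reduces to an estimate on $\E\bigl[\bigl(n^{-1}\sum_i W_i^{(jk)}\bigr)^4\bigr]^{1/2}$.

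For this fourth moment, I would expand
\[
\E\Bigl[\Bigl(\sum_i W_i^{(jk)}\Bigr)^{4}\Bigr] \le 3n^2\bigl(\E[(W_1^{(jk)})^2]\bigr)^2 + n\,\E[(W_1^{(jk)})^4],
\]
and use Cauchy--Schwarz together with Assumption \ref{SubGauss} to control
\[
\E[(W_1^{(jk)})^2] \le \E[\langle u_j,X\rangle^2\langle u_k,X\rangle^2] \le \sqrt{\E[\langle u_j,X\rangle^4]\E[\langle u_k,X\rangle^4]} \le C\lambda_j\lambda_k,
\]
and analogously $\E[(W_1^{(jk)})^4] \le \sqrt{\E[\langle u_j,X\rangle^8]\E[\langle u_k,X\rangle^8]} \le C\lambda_j^2\lambda_k^2$, the sub-Gaussian norm bound giving $\E[\langle u_j,X\rangle^{2p}] \le (C_1\sqrt{2p})^{2p}\lambda_j^p$. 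Dividing by $n^4$ and taking square roots yields $\E[\langle u_j,\Delta u_k\rangle^4]^{1/2} \le C\lambda_j\lambda_k/n$. Summing over $k$ gives \eqref{EqGoal4thMom}, and then summing over $j$ gives the stated bound.

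The main obstacle, and only real point of care, is the fourth-moment estimate on the scalar product $n^{-1}\sum_i W_i^{(jk)}$: one must avoid losing the correct dependence $\lambda_j\lambda_k/n$ (not $\lambda_j\lambda_k/\sqrt{n}$) when taking square roots, which is why the elementary Rosenthal-type expansion above is preferable to a naive Cauchy--Schwarz on $\|P_j\Delta\|_2^2$. The rest is an application of Minkowski twice and of the sub-Gaussian moment bound built into Assumption \ref{SubGauss}.
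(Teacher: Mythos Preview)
Your proof is correct and follows essentially the same route as the paper: two applications of Minkowski's inequality reduce the claim to the rank-one estimate $\bigl(\E[\|P_j\Delta P_k\|_2^4]\bigr)^{1/2}\le C\lambda_j\lambda_k/n$, which the paper simply asserts as a consequence of Assumption \ref{SubGauss}, while you spell it out via the Rosenthal-type expansion and Cauchy--Schwarz on the sub-Gaussian moments. The only cosmetic point is that in bounding $\E[(W_1^{(jk)})^4]$ you should first pass from the centered to the uncentered fourth moment (e.g.\ via $(a-b)^4\le 8a^4+8b^4$) before applying Cauchy--Schwarz, but this costs only a constant and does not affect the argument.
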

\begin{proof}[Proof of Lemma \ref{lemma2mthm3}]
By the Minkovski inequality, we have
\begin{equation}\label{eq:eq1}
\Bigg( \mathbb{E}\Bigg[ \Bigg( \sum_{j\le r}\frac{\Vert P_{j}\Delta \Vert_2^2}{\lambda_j-\lambda_{d+1}}\Bigg)^2\Bigg]\Bigg) ^{1/2}\le \sum_{j\le r}\sum_{k\ge 1}\frac{\left( \mathbb{E}\left[ \Vert P_{j}\Delta P_k\Vert_2^4 \right]\right) ^{1/2}}{\lambda_j-\lambda_{d+1}}.
\end{equation}
Using Assumption \ref{SubGauss}, it is simple to check that
\begin{equation}\label{eq:eq2}
\left( \mathbb{E}\left[ \Vert P_{j}\Delta P_k\Vert_2^4 \right]\right) ^{1/2}\le  128C_1\lambda_j\lambda_k/n,
\end{equation}
and the claim follows from inserting this into \eqref{eq:eq1}.
\end{proof}

\subsection{Proof of Theorem \ref{ThmLocalGlobalImpr}}
By assumption, we have $\lambda_j-\lambda_p\le c_1^{-1}(\lambda_j-\lambda_{d+1})$ for all $j\le d$. Thus, Lemma \ref{SpectralDec} applied with $\mu=\lambda_p$ yields
\begin{equation}\label{EqERRed1}
{\mathcal E}^{PCA}_d\le \sum_{j\le d}(\lambda_j-\lambda_p)\Vert P_{j}\hat{P}_{> d}\Vert_2^2\le c_1^{-1}{\mathcal E}^{PCA}_{\le d}(\lambda_{d+1}).
\end{equation}
The local bound now follows from Proposition \ref{MainRes2} applied with $r=s=d$. 

It remains to prove the global bound. We begin with the following lemma.

\begin{lemma}\label{hilfslem} Let $s\le d$ be the largest number such that Condition \eqref{EqThm3Cond} is satisfied (and $s=0$ if such a number does not exist). Then we have
\begin{equation*}
\sum_{s<j\le d}(\lambda_j-\lambda_{d+1}) \le 16C_3^2\sum_{j\le s}\frac{\lambda_j \tr_{>s}(\Sigma)}{n(\lambda_j-\lambda_{d+1})}+\sum_{s<j\le d}\frac{4\sqrt{2}C_3\lambda_j}{\sqrt{n}}.
\end{equation*}
\end{lemma}

\begin{proof}
If Condition \eqref{EqThm3Cond} does not hold for $s+1$, then we have
\begin{equation*}
\frac{\lambda_{s+1}-\lambda_{d+1}}{\lambda_{s+1}}<16C_3^2\sum_{j\le s+1}\frac{\lambda_j}{n(\lambda_j-\lambda_{d+1})}.
\end{equation*}
Multiplying both sides by $(\lambda_{s+1}-\lambda_{d+1})/\lambda_{s+1}$, we obtain
\[
\left( \frac{\lambda_{s+1}-\lambda_{d+1}}{\lambda_{s+1}}\right) ^2<\left( \frac{\lambda_{s+1}-\lambda_{d+1}}{\lambda_{s+1}}\right)16C_3^2\sum_{j\le s}\frac{\lambda_j}{n(\lambda_j-\lambda_{d+1})}+\frac{16C_3^2}{n}.
\]
Using that for $x,a,b\ge 0$ the inequality $x^2\le ax+b$ implies $x^2\le a^2+2b$, which in turn implies $x\le a+\sqrt{2b}$, we obtain 
\[
\frac{\lambda_{k}-\lambda_{d+1}}{\lambda_{k}}\le \frac{\lambda_{s+1}-\lambda_{d+1}}{\lambda_{s+1}}<16C_3^2\sum_{j\le s}\frac{\lambda_j}{n(\lambda_j-\lambda_{d+1})}+\frac{4\sqrt{2}C_3}{\sqrt{n}}
\]
for all $s<k\le d$. Multiplying both sides with $\lambda_{k}$ and summing over $s<k\le d$, the claim follows.
\end{proof}
Now let us finish the proof of the global bound.
Set
\[
A=\tr_{> s}(\Sigma)+\tr(\Sigma)\exp\bigg( -\frac{nc_1^2(\lambda_{s}-\lambda_{p})^2}{16C_3^2\lambda^2_{s}}\bigg)
\]
(and $A=\tr(\Sigma)$ if $s=0$) and let $j_0\le d$ be the unique number such that
\begin{equation}\label{EqDefj_0}
\frac{\lambda_{j}A}{n(\lambda_{j}-\lambda_{d+1})^2}\le 1\Leftrightarrow j\le j_0
\end{equation}
(and $j_0=0$ if such a number does not exist)
If $s\le j_0$, then Proposition \ref{MainRes2} with $r=s$ (resp. Corollary \ref{MainRes1Var} if $s=0$) and Lemma \ref{hilfslem} give
\begin{align*}
\mathbb{E}\left[{\mathcal E}^{PCA}_{\le d}(\lambda_{d+1})\right]&\le C\sum_{j\le s}\frac{\lambda_jA}{n(\lambda_j-\lambda_{d+1})}+2\sum_{s<j\le d}(\lambda_j-\lambda_{d+1})\nonumber\\
&\le (C+32C_3^2)\sum_{j\le s}\frac{\lambda_jA}{n(\lambda_j-\lambda_{d+1})}+\sum_{s<j\le d}\frac{8\sqrt{2}C_3\lambda_j}{\sqrt{n}}.
\end{align*}
From $s\le j_0$ we infer
\[
\frac{\lambda_{j}A}{n(\lambda_{j}-\lambda_{d+1})^2}\le 1
\]
for all $j\le s$. Using this and for $a,x\ge 0$ the implication $a/x^2\le 1 \Rightarrow a/x\le \sqrt{a}$, we find
\begin{equation}\label{Eqbound1<=d}
\mathbb{E}\left[{\mathcal E}^{PCA}_{\le d}(\lambda_{d+1})\right]\le (C+32C_3^2)\sum_{j\le s}\sqrt{\frac{\lambda_jA}{n}}+\sum_{s<j\le d}\frac{8\sqrt{2}C_3\lambda_j}{\sqrt{n}}.
\end{equation}
On the other hand, if $s>j_0$, then applying Proposition \ref{MainRes2} with $r=j_0$ yields
\begin{align}
\mathbb{E}\left[{\mathcal E}^{PCA}_{\le d}(\lambda_{d+1})\right]
&\le C\sum_{j\le j_0}\frac{\lambda_jA}{n(\lambda_j-\lambda_{d+1})}+2\sum_{j_0<j\le d}(\lambda_j-\lambda_{d+1})\label{Eqbound2<=d}\\
&\le C\sum_{j\le d}\sqrt{\frac{\lambda_jA}{n}}\nonumber
\end{align}
with $C>2$. Plugging the definition of $A$ into \eqref{Eqbound1<=d} and \eqref{Eqbound2<=d}, the claim follows from \eqref{EqERRed1} and the inequality $\sqrt{x+y}\le\sqrt{x}+\sqrt{y}$, $x,y\ge 0$.\qed

\subsection{Proof of Theorem \ref{ThmRelative}} Similarly as in \eqref{EqERRed1}, we have
\begin{equation*}
{\mathcal E}^{PCA}_d\le \sum_{j\le d}\lambda_j\Vert P_{j}\hat{P}_{> d}\Vert_2^2\le \frac{\lambda_s}{\lambda_s-\lambda_{d+1}}\sum_{j\le s}(\lambda_j-\lambda_{d+1})\Vert P_{j}\hat{P}_{> d}\Vert_2^2+\tr_{> s}(\Sigma).
\end{equation*}
By \eqref{EqExpBound} and \eqref{EqRecBound2} with $\mu=\lambda_{d+1}$ and $r=s$, we have
\begin{align*}
&\sum_{j\le s}(\lambda_j-\lambda_{d+1})\Vert P_{j}\hat{P}_{> d}\Vert_2^2
\\
&\le 16\sum_{j\le s}\frac{\Vert P_{j}\Delta P_{>s}\Vert_2^2}{\lambda_j-\lambda_{d+1}}
+ 2\sum_{j\le s}(\lambda_j-\lambda_{d+1}) \mathbbm{1}\big(\hat{\lambda}_{d+1}-\lambda_{d+1}> (\lambda_j-\lambda_{d+1})/2\big)\nonumber\\
&+8\sum_{j\le s}\frac{\Vert P_{j}\Delta\Vert_2^2}{\lambda_j-\lambda_{d+1}} \mathbbm{1}\big(\|S_{\le s}\Delta S_{\le s}\|_\infty > 1/4\big).
\end{align*}
As shown in the proof of Proposition \ref{MainRes2} the inequality
\[
\mathbb{E}\Big[\sum_{j\le s}(\lambda_j-\lambda_{d+1})\Vert P_{j}\hat{P}_{> d}\Vert_2^2\Big]\le C\sum_{j\le s}\frac{\lambda_j\tr_{> s}(\Sigma)}{n(\lambda_j-\lambda_{d+1})}+R
\]
holds with remainder term $R$ given in Proposition \ref{MainRes2} with $r=s$. Inserting this into the above inequality and using Condition \eqref{EqThm3Cond}, we get
\begin{align*}
\mathbb{E}[{\mathcal E}^{PCA}_d]\le C\tr_{> s}(\Sigma)+C\operatorname{tr}(\Sigma)\exp\bigg( -\frac{n(\lambda_{s}-\lambda_{d+1})^2}{16C_3^2\lambda^2_{s}}\bigg)
\end{align*}
with a constant $C>0$ depending only on $C_1$.\qed

\appendix

\section{Proof of Proposition \ref{PropAsympt}}\label{ProofPropAsympt}
We begin by recalling the following asymptotic result from multivariate analysis. By  \cite[Proposition 5]{DPR82}, we have
\begin{equation}\label{EqCLT}
\sqrt{n}\Delta=\sqrt{n}\Delta_n\xrightarrow{d}L=\sum_{j= 1}^p\sum_{k= 1}^p\sqrt{\lambda_j\lambda_k}\xi_{jk} (u_j\otimes u_k),
\end{equation}
where the upper triangular coefficients $\xi_{jk}$, $k\ge j$ are independent and centered Gaussian random variables, with variance $1$ for $k>j$ and variance $2$ for $k=j$. The lower triangular coefficients $\xi_{jk}$, $k< j$ are determined by $\xi_{jk}=\xi_{kj}$.
For $l= 1,\dots,p$, let $I_l=\{j:\lambda_j=\lambda_l\}$,
\[
Q_l=\sum_{j\in I_l}P_j,
\text{ and }
\hat{Q}_l=\sum_{j\in I_l}\hat{P}_j.
\]
For $l\ge 1$, we have
\[
Q_lLQ_l=\lambda_l\sum_{j,k\in I_l}\xi_{jk} (u_j\otimes u_k)
\]
and the random matrix
\[
M_l=(\xi_{jk})_{j,k\in I_l}
\]
is a GOE matrix. It is well known (see e.g. \cite{T12}) that the eigenvalues of $M_l$ (in decreasing order) have a joint density with respect to the Lebesgue measure on $\mathbb{R}^{m_l}_{\ge}$, where $m_l=|I_l|$, and that the matrix of the corresponding eigenvectors is distributed according to the Haar measure on the orthogonal group $O(m_l)$. It is easy to see that $M_l$ and $Q_lLQ_l$ have the same non-zero eigenvalues and that a matrix of eigenvectors of $M_l$ gives the coefficients of a basis of eigenvectors of $Q_lLQ_l$ with respect to the basis $(u_j)_{j\in I_l}$. In particular, with probability $1$, $Q_lLQ_l$ has rank $m_l$ and all $m_l$ non-zero eigenvalues are distinct. Let $(P^{Haar}_j)_{j\in I_l}$ denote the corresponding spectral projectors (ordered such that the orthogonal projection onto the eigenvector corresponding to the largest non-zero eigenvalue appears first, and so forth). The following key observation is proved below.
 \begin{lemma}\label{HaarMeasConv}
We have
\begin{equation*}
(\sqrt{n}\Delta,\hat{P}_1,\dots,\hat{P}_p)\xrightarrow{d}(L,P_1^{Haar},\dots,P_p^{Haar}).
\end{equation*}
\end{lemma}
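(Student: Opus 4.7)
The plan is to combine tightness with a perturbation-theoretic identification of the within-cluster limits of the $\hat P_j$. Since $\sqrt n \Delta_n$ converges in distribution by \eqref{EqCLT} and each $\hat P_j$ takes values in the compact set of rank-one orthogonal projectors on $\mathcal H$, the joint sequence $(\sqrt n \Delta_n, \hat P_1, \dots, \hat P_p)$ is tight. By Prokhorov it suffices to identify the distribution of an arbitrary weak subsequential limit, and the Skorokhod representation theorem lets me work on a probability space where along the subsequence $\sqrt n \Delta_n \to L$ almost surely and $\hat P_j \to \tilde P_j$ almost surely for every $j$. The task reduces to showing that $(L, \tilde P_1, \dots, \tilde P_p) \stackrel{d}{=} (L, P_1^{Haar}, \dots, P_p^{Haar})$.

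From $\hat\Sigma = \Sigma - \Delta_n \to \Sigma$ in operator norm almost surely and the Riesz projector representation over a small contour enclosing only the cluster around $\lambda_l$, one obtains $\hat Q_l \to Q_l$ almost surely. Hence $\sum_{j \in I_l} \tilde P_j = Q_l$ and each $\tilde P_j$ with $j \in I_l$ has range in $V_l := Q_l \mathcal H$. To pin down the individual limits within the cluster, I consider the restricted operator $Q_l \hat\Sigma Q_l$ on $V_l$ and use the exact identity
\[
\sqrt n (\lambda_l Q_l - Q_l \hat\Sigma Q_l) = \sqrt n \, Q_l \Delta_n Q_l \longrightarrow Q_l L Q_l \quad \text{a.s.}
\]
A standard perturbation argument for clustered eigenvalues---e.g.\ conjugating $\hat\Sigma$ by a near-isometry $\operatorname{ran}(\hat Q_l) \to V_l$, or a Feshbach-Schur reduction, combined with continuity of the spectral decomposition at operators with simple spectrum---then identifies $(\tilde P_j)_{j \in I_l}$ with the family of rank-one eigenprojectors of $Q_l L Q_l$ restricted to $V_l$. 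This is legitimate because $Q_l L Q_l|_{V_l}$ has the distribution of $\lambda_l M_l$ with $M_l$ a GOE matrix, so its eigenvalues are almost surely distinct.

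Finally, orthogonal invariance of the GOE distribution yields that the matrix of eigenvectors of $M_l$ is Haar-distributed on $O(m_l)$, which is precisely the defining property of $(P_j^{Haar})_{j \in I_l}$; independence across different clusters follows from the independence of the blocks $Q_l L Q_l$ for distinct $l$, which involve disjoint sets of Gaussian coefficients $\xi_{jk}$. A technical subtlety is that the pathwise identification in the previous paragraph pairs the largest in-cluster eigenvalue of $\hat\Sigma$ with the smallest eigenvalue of $Q_l L Q_l|_{V_l}$ (since $\hat\Sigma = \Sigma - n^{-1/2}\cdot\sqrt n\Delta_n$), which is the reverse of the ordering used to define $P_j^{Haar}$; this causes no loss in distribution because $L \stackrel{d}{=} -L$ implies that the joint law of $L$ with the within-cluster eigenprojectors is invariant under reversing the ordering, as one checks by substituting $L \mapsto -L$ (eigenprojectors are preserved, eigenvalue orderings are flipped). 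The main obstacle is the within-cluster perturbation step: the $\hat P_j$ live in the moving subspace $\operatorname{ran}(\hat Q_l)$ rather than in the fixed $V_l$, and it is the continuity of the spectral decomposition at operators with simple spectrum---applied to the almost sure limit $Q_l L Q_l|_{V_l}$ whose eigenvalues are distinct by GOE absolute continuity---that carries the identification through.
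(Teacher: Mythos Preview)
Your overall route via tightness, Prokhorov, and Skorokhod is sound and differs from the paper's, which is more direct: the paper works with the operators $\hat Q_l\sqrt{n}(\hat\Sigma-\lambda_l I)\hat Q_l$---whose non-zero-eigenvalue eigenprojectors are \emph{exactly} the $\hat P_j$ for $j\in I_l$---establishes their joint convergence with $\sqrt{n}\Delta$ by Slutsky's lemma (citing \cite{DPR82} for $\hat Q_l\sqrt{n}(\hat\Sigma-\lambda_l I)\hat Q_l-Q_l\sqrt{n}\Delta Q_l\to 0$ in probability), and then applies the continuous mapping theorem to the eigenprojector map. You instead look at $Q_l\hat\Sigma Q_l$, whose eigenprojectors are not the $\hat P_j$, and then invoke an unspecified ``standard perturbation argument'' (Feshbach--Schur or conjugation by a near-isometry) to pass to the moving subspace $\operatorname{ran}(\hat Q_l)$. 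That passage is exactly the substantive step; it can be made to work, but it is not automatic, and your sketch does not supply it. The paper's choice of operator sidesteps this entirely.

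More seriously, your resolution of the ordering subtlety is incorrect. You claim $L\stackrel{d}{=}-L$ makes the joint law of $(L,(P_j^{Haar})_j)$ invariant under reversing the within-cluster ordering. It does not: the substitution $L\mapsto -L$ yields $(L,(P_j^{Haar})_j)\stackrel{d}{=}(-L,(P_j^{Haar,\mathrm{rev}})_j)$, which is not the same as $(L,(P_j^{Haar,\mathrm{rev}})_j)$. For a concrete obstruction, take $|I_l|=2$: the functional $\tr\big(P_{\min I_l}^{Haar}\,Q_lLQ_l\big)=\lambda_l\,\mu_1(M_l)$ has a different distribution than $\tr\big(P_{\min I_l}^{Haar,\mathrm{rev}}\,Q_lLQ_l\big)=\lambda_l\,\mu_2(M_l)$ (largest versus smallest GOE eigenvalue), so the two joint laws genuinely differ. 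In the downstream application to Proposition~\ref{PropAsympt} only within-cluster exchangeability of the $(P_j^{Haar})_{j\in I_l}$ is used, so the ordering is immaterial there---but your argument does not establish the lemma as stated.
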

Lemma \ref{HaarMeasConv} implies Proposition \ref{PropAsympt}. By  Lemma \ref{SpectralDec} and Lemma \ref{FirstOrdExp}, we have
\begin{align}
{\mathcal E}^{PCA}_{d,n}=\sum_{\substack{j\le d,k>d:\\\lambda_j>\lambda_d}} (\lambda_j-\lambda_d)\frac{\|P_j\Delta\hat{P}_k\|_2^2}{(\lambda_j-\hat{\lambda}_k)^2}+\sum_{\substack{j\le d,k>d:\\\lambda_k<\lambda_d}}(\lambda_d-\lambda_k)\frac{\|P_k\Delta\hat{P}_j\|_2^2}{(\hat{\lambda}_j-\lambda_k)^2}.\label{EqKeyRepLimThm}
\end{align}
Using Lemma \ref{HaarMeasConv}, \eqref{EqKeyRepLimThm}, the fact that $\hat{\lambda}_l\xrightarrow{a.s.}\lambda_l$ for all $l$ (see \cite[Proposition 2]{DPR82}), and the continuous mapping theorem, we thus conclude that
\begin{align*}
n{\mathcal E}^{PCA}_{d,n}
&\xrightarrow{d}\sum_{\substack{j\le d,k>d:\\\lambda_j>\lambda_d,\lambda_k<\lambda_d}} \frac{\|P_jLP_k\|_2^2}{\lambda_j-\lambda_k}\\
&+\sum_{\substack{j\le d,k>d:\\\lambda_j>\lambda_d, \lambda_k=\lambda_d}}(\lambda_j-\lambda_d)\frac{\|P_jL P_k^{Haar}\|_2^2}{(\lambda_j-\lambda_k)^2}\\
&+\sum_{\substack{j\le d,k>d:\\\lambda_j=\lambda_d,\lambda_k<\lambda_d}}(\lambda_d-\lambda_k)\frac{\|P_kL P_j^{Haar}\|_2^2}{(\lambda_j-\lambda_k)^2},
\end{align*}
where we also used the identities $\sum_{j\in I_l}P_j^{Haar}=Q_l=\sum_{j\in I_l}P_j$ in the first summand of the limit.
Further, note that the tuple $(P_j^{Haar})_{j\in I_d}$ is independent of $\{P_j L P_k: \lambda_j\neq \lambda_d\text{ or }\lambda_k\neq \lambda_d\}$. Hence, the random variables $\|P_jLP_k\|_2^2/(\lambda_j\lambda_k)$, $\|P_jL P_k^{Haar}\|_2^2/(\lambda_j\lambda_k)$, and $\|P_kL P_j^{Haar}\|_2^2/(\lambda_j\lambda_k)$ appearing in the above sums are independent and chi-square distributed. This gives Proposition \ref{PropAsympt}. It remains to prove Lemma \ref{HaarMeasConv}.
\begin{proof}[Proof of Lemma \ref{HaarMeasConv}]
Let $I\subseteq \{1,\dots,p\}$ be a subset which contains for each $l\ge 1$ exactly one element of $I_l$. Then we have
\[
(\sqrt{n}\Delta,(Q_l\sqrt{n}\Delta Q_l)_{l\in I})\xrightarrow{d}(L,(Q_l L Q_l)_{l\in I}).
\]
In \cite[Section 2.2.1]{DPR82}, it is shown that for each $l\in I$,
\begin{equation}
\hat{Q}_l \sqrt{n}(\hat{\Sigma}-\lambda_l I)\hat{Q}_l-Q_l \sqrt{n}\Delta Q_l\xrightarrow{\mathbb{P}}0.\label{eq:slutskyeigensp}
\end{equation}
By Slutsky's lemma, we thus have
\begin{equation}\label{eq:wGOE}
(\sqrt{n}\Delta,(\hat{Q}_l \sqrt{n}(\hat{\Sigma}-\lambda_l I)\hat{Q}_l)_{l\in I})\xrightarrow{d} (L,(Q_l LQ_l)_{l\in I}).
\end{equation}
We now apply the continuous mapping theorem. For $l\in I$, let $h_l$ be a mapping sending a symmetric $p\times p$ matrix of rank $m_l$ to the spectral projectors corresponding to the $m_l$ non-zero eigenvalues (ordered such that the orthogonal projection onto the eigenvector corresponding to the largest non-zero eigenvalue appears first, and so forth). Note that this map is uniquely determined and continuous if restricted to the open subset of symmetric matrices of rank $m_l$ having distinct non-zero eigenvalues. As already argued above, with probability $1$, $Q_lLQ_l$ has rank $m_l$ and all $m_l$ non-zero eigenvalues are distinct. Moreover, since $X$ is Gaussian, the same is true for $\hat{Q}_l \sqrt{n}(\hat{\Sigma}-\lambda_l I)\hat{Q}_l$. Thus, the claim follows from \cite[Theorem 2.7]{B99} applied to $h=(I,(h_l)_{l\in I})$.
\end{proof}

\section{Linear expansions}\label{SecLinExp}
In this complementary section, we show that linear expansions of $\hat{P}_{> d}$ and $\hat{P}_{\le d}$ may lead to tight bounds for the excess risk as well as for the Hilbert-Schmidt distance if stronger eigenvalue conditions are satisfied (including $\lambda_d>\lambda_{d+1}$). In particular, these bounds lead to the exact leading terms appearing in the asymptotic limit results in \eqref{EqAsympSinTheta} and Proposition \ref{PropAsympt}. Note that it is possible to derive higher order expansions by similar, but more tedious considerations.

\subsection{Main deterministic upper bounds} Proceeding as in Section \ref{SecErrorDec}, we have the following upper bound for the Hilbert-Schmidt distance.
\begin{proposition}\label{LinExpSinTheta}
On the event $\{\hat{\lambda}_{d+1}-\lambda_{d+1}\le (\lambda_d-\lambda_{d+1})/2\}$ we have
\begin{equation}\label{EqLinExpSinTheta}
\bigg\|P_{\le d}\hat P_{>d}-\sum_{j\le d}\sum_{k>d} \frac{P_j\Delta P_k}{\lambda_j-\lambda_k}\bigg\|_2^2\le \bigg\|\sum_{j\le d}\sum_{k>d} \frac{P_j\Delta P_k}{\lambda_j-\lambda_k}\bigg\|_2^2\|P_{> d}\hat P_{\le d}\|_\infty^2+R_1
\end{equation}
with remainder term $R_1$ given by
\begin{align*}
R_1&=16\sum_{j\le d}\frac{1}{(\lambda_j-\lambda_{d+1})^2}\Bigg\| \sum_{k>d}\frac{P_j\Delta P_k\Delta P_{>d}}{\lambda_j-\lambda_k}\Bigg\|_2^2+8\|S_{\le d}^2\Delta S_{\le d}\|_\infty^2 {\mathcal E}^{PCA}_{\le d}(\lambda_{d+1})\\
&+16\sum_{j\le d}\frac{1}{(\lambda_j-\lambda_{d+1})^2}\Bigg\| \sum_{k>d}\frac{P_j\Delta P_k\Delta S_{\le d}}{\lambda_j-\lambda_k}\Bigg\|_2^2 {\mathcal E}^{PCA}_{\le d}(\lambda_{d+1})
\end{align*}
with $S_{\le d}=S_{\le d}(\lambda_{d+1})$ from \eqref{EqWeightOper}.
In particular, letting
\[
L_{1}(\Delta)=\sum_{j\le d}\sum_{k>d} \frac{P_j\Delta P_k}{\lambda_j-\lambda_k},
\]
we have, on the event $\{\hat{\lambda}_{d+1}-\lambda_{d+1}\le (\lambda_d-\lambda_{d+1})/2\}$,
\begin{equation}\label{EqLinExpSinThetaC1}
\|\hat{P}_{\le d}-P_{\le d}\|_2^2\le 8\|L_{1}(\Delta)\|_2^2+4R_1
\end{equation}
and
\begin{align}\label{EqLinExpSinThetaC2}
&|\|\hat{P}_{\le d}-P_{\le d}\|_2^2-2\|L_{1}(\Delta)\|_2^2|\\
&\le C(\|L_{1}(\Delta)\|_2^4+\|L_{1}(\Delta)\|_2^3+R_1\|L_{1}(\Delta)\|_2^2+\sqrt{R_1}\|L_{1}(\Delta)\|_2+R_1)\nonumber
\end{align}
with some absolute constant $C>0$.
\end{proposition}

\begin{proof}
In the proof of Lemma \ref{LemLinExp} we have also shown that for $j\le d$,
\begin{align}
&P_j\hat{P}_{>d}-\sum_{k>d}\frac{P_j\Delta P_k}{\lambda_j-\lambda_k}\label{eq:soev}\\
&=\sum_{k>d}\frac{P_j\Delta P_k\hat{P}_{\le d}}{\lambda_j-\lambda_k}+\sum_{l>d}\frac{P_j\Delta P_{\le d}\hat{P}_l}{\lambda_j-\hat{\lambda}_l}-\sum_{k>d}\sum_{l>d}\frac{P_j\Delta P_k\Delta \hat{P}_l}{(\lambda_j-\hat{\lambda}_l)(\lambda_j-\lambda_k)}.\nonumber
\end{align}
Summing over $j\le d$ and taking the Hilbert-Schmidt norm, we obtain
\begin{align*}
&\Bigg\|P_{\le d}\hat{P}_{>d}-\sum_{j\le d}\sum_{k>d}\frac{P_j\Delta P_k}{\lambda_j-\lambda_k}\Bigg\|_2^2\\
&\le \Bigg\| \sum_{j\le d}\sum_{k>d}\frac{P_j\Delta P_k\hat{P}_{\le d}}{\lambda_j-\lambda_k}\Bigg\|_2^2+2\Bigg\| \sum_{j\le d}\sum_{l>d}\frac{P_j\Delta P_{\le d}\hat{P}_{l}}{\lambda_j-\hat{\lambda}_l}\Bigg\|_2^2\\
&+2\Bigg\| \sum_{j\le d}\sum_{k>d}\sum_{l>d}\frac{P_j\Delta P_k\Delta \hat{P}_l}{(\lambda_j-\hat{\lambda}_l)(\lambda_j-\lambda_k)}\Bigg\|_2^2,
\end{align*}
where we also applied the triangle inequality. Thus, on the event $\{\hat{\lambda}_{d+1}-\lambda_{d+1}\le (\lambda_d-\lambda_{d+1})/2\}$ we have
\begin{align}
&\Bigg\|P_{\le d}\hat{P}_{>d}-\sum_{j\le d}\sum_{k>d}\frac{P_j\Delta P_k}{\lambda_j-\lambda_k}\Bigg\|_2^2\label{EqSi}\\
&\le \Bigg\|\sum_{j\le d}\sum_{k>d} \frac{P_j\Delta P_k}{\lambda_j-\lambda_k}\Bigg\|_2^2\nonumber\|P_{>d}\hat{P}_{\le d}\|_\infty^2+8\sum_{j\le d}\sum_{l>d}\frac{\|P_j\Delta P_{\le d}\hat{P}_{l}\|_2^2}{(\lambda_j-\lambda_{d+1})^2}\nonumber\\
&+8\sum_{j\le d}\sum_{l>d}\frac{1}{(\lambda_j-\lambda_{d+1})^2}\Bigg\| \sum_{k>d}\frac{P_j\Delta P_k\Delta \hat{P}_{l}}{\lambda_j-\lambda_k}\Bigg\|_2^2\nonumber.
\end{align}
The second term on the right-hand side of \eqref{EqSi} is equal to
\[
8\|S_{\le d}^2\Delta P_{\le d}\hat{P}_{ > d}\|_2^2=8\| S_{\le d}^2\Delta S_{\le d}R_{\le d}\hat{P}_{>d}\|_2^2
\]
with $R_{\le d}=R_{\le d}(\lambda_{d+1})$ from \eqref{EqWeightOperR} and thus bounded by
\[
8\|S_{\le d}^2\Delta S_{\le d}\|_\infty^2 \|R_{\le d}\hat{P}_{ > d}\|_2^2.
\]
Similarly, the third term is bounded via
\begin{align*}
&8\sum_{j\le d}\sum_{l>d}\frac{1}{(\lambda_j-\lambda_{d+1})^2}\Bigg\| \sum_{k>d}\frac{P_j\Delta P_k\Delta \hat{P}_{l}}{\lambda_j-\lambda_k}\Bigg\|_2^2\\
&=8\sum_{j\le d}\frac{1}{(\lambda_j-\lambda_{d+1})^2}\Bigg\| \sum_{k>d}\frac{P_j\Delta P_k\Delta \hat{P}_{>d}}{\lambda_j-\lambda_k}\Bigg\|_2^2\\
&\le 16\sum_{j\le d}\frac{1}{(\lambda_j-\lambda_{d+1})^2}\Bigg\| \sum_{k>d}\frac{P_j\Delta P_k\Delta P_{>d}}{\lambda_j-\lambda_k}\Bigg\|_2^2\\
&+16\sum_{j\le d}\frac{1}{(\lambda_j-\lambda_{d+1})^2}\Bigg\|\sum_{k>d}\frac{P_j\Delta P_k\Delta S_{\le d}}{\lambda_j-\lambda_k}\Bigg\|_2^2 \|R_{\le d}\hat{P}_{>d}\|_2^2.
\end{align*}
Inserting these bounds into \eqref{EqSi} and using $\|R_{\le d}\hat{P}_{ > d}\|_2^2={\mathcal E}^{PCA}_{\le d}(\lambda_{d+1})$ from \eqref{EqExcRiskRepr}, \eqref{EqLinExpSinTheta} follows. Inserting \eqref{EqLinExpSinTheta} into the inequality
\[
\|P_{\le d}\hat P_{>d}\|_2^2\le 2\|P_{\le d}\hat{P}_{>d}-L_{1}(\Delta)\|_2^2+2\|L_{1}(\Delta)\|_2^2,
\]
and using that $\|P_{> d}\hat P_{\le d}\|_\infty^2\le 1$, we get
\begin{equation}\label{EqSinThetaFS1}
\|P_{\le d}\hat P_{>d}\|_2^2\le 4\|L_{1}(\Delta)\|_2^2+2R_1.
\end{equation}
Now, \eqref{EqLinExpSinThetaC1} follows from \eqref{EqSinThetaFS1} and $\|\hat{P}_{\le d}-P_{\le d}\|_2^2=2\|P_{\le d}\hat P_{>d}\|_2^2$. Inserting \eqref{EqSinThetaFS1} in combination with $\|P_{> d}\hat P_{\le d}\|_\infty^2\le \|P_{> d}\hat P_{\le d}\|_2^2=\|P_{\le d}\hat P_{>d}\|_2^2$ into \eqref{EqLinExpSinTheta}, we get
\begin{equation}\label{EqSinThetaFS2}
\|P_{\le d}\hat{P}_{>d}-L_{1}(\Delta)\|_2^2\le 4\|L_{1}(\Delta)\|_2^4+ 2R_1\|L_{1}(\Delta)\|_2^2+R_1,
\end{equation}
and \eqref{EqLinExpSinThetaC2} follows from inserting \eqref{EqSinThetaFS2} into
\begin{align*}
&|\|\hat{P}_{\le d}-P_{\le d}\|_2^2-2\|L_{1}(\Delta)\|_2^2|\\
&=2|\|P_{\le d}\hat P_{>d}\|_2^2-\|L_{1}(\Delta)\|_2^2|\\
&\le 2\|P_{\le d}\hat P_{>d}-L_{1}(\Delta)\|_2^2+4\|L_{1}(\Delta)\|_2\|P_{\le d}\hat P_{>d}-L_{1}(\Delta)\|_2.
\end{align*}
This completes the proof.
\end{proof}
Similarly, we have the following  upper bound for ${\mathcal E}^{PCA}_{\le d}(\lambda_{d+1} )$:

\begin{proposition}\label{LinExpER1} On the event $\{\hat{\lambda}_{d+1}-\lambda_{d+1}\le (\lambda_d-\lambda_{d+1})/2\}$ we have
\begin{align}
&\bigg\|R_{\le d}\hat P_{>d}-\sum_{j\le d}(\lambda_j-\lambda_{d+1})^{1/2}\sum_{k>d} \frac{P_j\Delta P_k}{\lambda_j-\lambda_k}\bigg\|_2^2\label{EqLinExpER1}\\
&\le \bigg\|\sum_{j\le d}(\lambda_j-\lambda_{d+1})^{1/2}\sum_{k>d} \frac{P_j\Delta P_k}{\lambda_j-\lambda_k}\bigg\|_2^2\|P_{> d}\hat P_{\le d}\|_\infty^2+R_2\nonumber
\end{align}
with $R_{\le d}=R_{\le d}(\lambda_{d+1})$ from \eqref{EqWeightOperR} and remainder term $R_2$ given by
\begin{align*}
R_2&=16\sum_{j\le d}\frac{1}{\lambda_j-\lambda_{d+1}}\Bigg\| \sum_{k>d}\frac{P_j\Delta P_k\Delta P_{>d}}{\lambda_j-\lambda_k}\Bigg\|_2^2+8\|S_{\le d}\Delta S_{\le d}\|_\infty^2 {\mathcal E}^{PCA}_{\le d}(\lambda_{d+1})\\
&+16\sum_{j\le d}\frac{1}{\lambda_j-\lambda_{d+1}}\Bigg\| \sum_{k>d}\frac{P_j\Delta P_k\Delta S_{\le d}}{\lambda_j-\lambda_k}\Bigg\|_2^2 {\mathcal E}^{PCA}_{\le d}(\lambda_{d+1}).
\end{align*}
In particular, letting
\[
L_{2}(\Delta)=\sum_{j\le d}(\lambda_j-\lambda_{d+1})^{1/2}\sum_{k>d} \frac{P_j\Delta P_k}{\lambda_j-\lambda_k},
\]
we have, on the event $\{\hat{\lambda}_{d+1}-\lambda_{d+1}\le (\lambda_d-\lambda_{d+1})/2\}$,
\begin{align}\label{EqLinExpER1C1}
{\mathcal E}^{PCA}_{\le d}(\lambda_{d+1} )&\le 4\|L_{2}(\Delta)\|_2^2+2R_2
\end{align}
and
\begin{align}
&|{\mathcal E}^{PCA}_{\le d}(\lambda_{d+1} )-\|L_{2}(\Delta)\|_2^2|\label{EqLinExpER1C2}\\
&\le C(\|L_{2}(\Delta)\|_2^2\|L_{1}(\Delta)\|_2^2+\|L_{2}(\Delta)\|_2^2\|L_{1}(\Delta)\|_2\nonumber\\
&+R_1\|L_{2}(\Delta)\|_2^2+\sqrt{R_1}\|L_{2}(\Delta)\|_2^2+\sqrt{R_2}\|L_{2}(\Delta)\|_2+R_2)\nonumber
\end{align}
with an absolute constant $C>0$ and $R_1$, $L_{1}(\Delta)$ from Proposition \ref{LinExpSinTheta}.
\end{proposition}
\begin{remark}
In Proposition \ref{LinExpER1} we did not establish a recursive argument. Instead, in order to bound the remainder terms, one can apply the excess risk bounds already derived in Propositions \ref{PropMainRes1} and \ref{MainRes2}.
\end{remark}
\begin{proof}
Inserting \eqref{eq:soev} into \eqref{EqLinExpER1}, we obtain
\begin{align*}
&\bigg\|R_{\le d}\hat P_{>d}-\sum_{j\le d}(\lambda_j-\lambda_{d+1})^{1/2}\sum_{k>d} \frac{P_j\Delta P_k}{\lambda_j-\lambda_k}\bigg\|_2^2\\
&\le \Bigg\|\sum_{j\le d}(\lambda_j-\lambda_{d+1})^{1/2} \sum_{k>d}\frac{P_j\Delta P_k\hat{P}_{\le d}}{\lambda_j-\lambda_k}\Bigg\|_2^2\\
&+2\Bigg\| \sum_{j\le d}(\lambda_j-\lambda_{d+1})^{1/2}\sum_{l>d}\frac{P_j\Delta P_{\le d}\hat{P}_{l}}{\lambda_j-\hat{\lambda}_l}\Bigg\|_2^2\\
&+2\Bigg\| \sum_{j\le d}(\lambda_j-\lambda_{d+1})^{1/2}\sum_{k>d}\sum_{l>d}\frac{P_j\Delta P_k\Delta \hat{P}_l}{(\lambda_j-\hat{\lambda}_l)(\lambda_j-\lambda_k)} \Bigg\|_2^2.
\end{align*}
Thus, on the event $\{\hat{\lambda}_{d+1}-\lambda_{d+1}\le (\lambda_d-\lambda_{d+1})/2\}$ we have
\begin{align}
&\bigg\|R_{\le d}\hat P_{>d}-\sum_{j\le d}(\lambda_j-\lambda_{d+1})^{1/2}\sum_{k>d} \frac{P_j\Delta P_k}{\lambda_j-\lambda_k}\bigg\|_2^2\label{EqQuadProofPr}\\
&\le \Bigg\|\sum_{j\le d}(\lambda_j-\lambda_{d+1})^{1/2} \sum_{k>d}\frac{P_j\Delta P_k}{\lambda_j-\lambda_k}\Bigg\|_2^2\|P_{>d}\hat{P}_{\le d}\|_\infty^2\nonumber\\
&+8\sum_{j\le d}\sum_{l>d}\frac{1}{\lambda_j-\lambda_{d+1}}\|P_j\Delta P_{\le d}\hat{P}_{l}\|_2^2\nonumber\\
&+8\sum_{j\le d}\sum_{l>d}\frac{1}{\lambda_j-\lambda_{d+1}}\Bigg\| \sum_{k>d}\frac{P_j\Delta P_k\Delta \hat{P}_l}{\lambda_j-\lambda_k} \Bigg\|_2^2.\nonumber
\end{align}
The second term on the right-hand side of \eqref{EqQuadProofPr} is equal to
\[
8\|S_{\le d}\Delta P_{\le d}\hat{P}_{>d}\|_2^2=8\|S_{\le d}\Delta S_{\le d}R_{\le d}\hat{P}_{>d}\|_2^2
\]
which can be bounded by
\begin{align*}
8\|S_{\le d}\Delta S_{\le d}\|_\infty^2\|R_{\le d}\hat{P}_{>d}\|_2^2=8\|S_{\le d}\Delta S_{\le d}\|_\infty^2{\mathcal E}^{PCA}_{\le d}(\lambda_{d+1}).
\end{align*}
Similarly, the third term on the right-hand side of \eqref{EqQuadProofPr} is bounded via
\begin{align*}
&8\sum_{j\le d}\sum_{l>d}\frac{1}{\lambda_j-\lambda_{d+1}}\Bigg\| \sum_{k>d}\frac{P_j\Delta P_k\Delta \hat{P}_{l}}{\lambda_j-\lambda_k}\Bigg\|_2^2\\
&=8\sum_{j\le d}\frac{1}{\lambda_j-\lambda_{d+1}}\Bigg\| \sum_{k>d}\frac{P_j\Delta P_k\Delta \hat{P}_{>d}}{\lambda_j-\lambda_k}\Bigg\|_2^2\\
&\le 16\sum_{j\le d}\frac{1}{\lambda_j-\lambda_{d+1}}\Bigg\| \sum_{k>d}\frac{P_j\Delta P_k\Delta P_{>d}}{\lambda_j-\lambda_k}\Bigg\|_2^2\\
&+16\sum_{j\le d}\frac{1}{\lambda_j-\lambda_{d+1}}\Bigg\| \sum_{k>d}\frac{P_j\Delta P_k\Delta S_{\le d}}{\lambda_j-\lambda_k}\Bigg\|_2^2{\mathcal E}^{PCA}_{\le d}(\lambda_{d+1})
\end{align*}
Inserting these bounds into \eqref{EqQuadProofPr}, \eqref{EqLinExpER1} follows. Inequality \eqref{EqLinExpER1C1} follows from inserting \eqref{EqLinExpER1} into the inequality
\begin{align*}
{\mathcal E}^{PCA}_{\le d}(\lambda_{d+1})=\|R_{\le d}\hat P_{>d}\|_2^2\le 2\|R_{\le d}\hat P_{>d}-L_{2}(\Delta)\|_2^2+2\|L_{2}(\Delta)\|_2^2
\end{align*}
and using $\|P_{> d}\hat P_{\le d}\|_\infty^2\le 1$. Moreover, we have
\begin{align*}
&|{\mathcal E}^{PCA}_{\le d}(\lambda_{d+1} )-\|L_{2}(\Delta)\|_2^2|\\
&\le \|R_{\le d}\hat P_{>d}-L_{2}(\Delta)\|_2^2+2\|L_{2}(\Delta)\|_2\|R_{\le d}\hat P_{>d}-L_{2}(\Delta)\|_2,
\end{align*}
and \eqref{EqLinExpER1C2} follows from inserting \eqref{EqLinExpER1} and $\|P_{> d}\hat P_{\le d}\|_\infty^2\le 4\|L_{1}(\Delta)\|_2^2+2R_1$ from \eqref{EqSinThetaFS1}.
\end{proof}

Following the same line of arguments, we have the following result for ${\mathcal E}^{PCA}_{> d}(\lambda_{d+1})$:

\begin{proposition}\label{LinExpER2} On the event $\{\hat{\lambda}_{d}-\lambda_{d}\ge -(\lambda_d-\lambda_{d+1})/2\}$ we have
\begin{align*}
&\bigg\|R_{>d}\hat P_{\le d}-\sum_{k> d}(\lambda_d-\lambda_k)^{1/2}\sum_{j\le d} \frac{P_k\Delta P_j}{\lambda_j-\lambda_k}\bigg\|_2^2\\
&\le \bigg\|\sum_{k> d}(\lambda_d-\lambda_k)^{1/2}\sum_{j\le d} \frac{P_k\Delta P_j}{\lambda_j-\lambda_k}\bigg\|_2^2\|P_{\le d}\hat P_{> d}\|_\infty^2+R_3
\end{align*}
with $R_{>d}=\sum_{k> d}(\lambda_d-\lambda_k)^{1/2}P_k$ and remainder term $R_3$ given by
\begin{align*}
R_3&=16\sum_{k> d}\frac{1}{\lambda_d-\lambda_{k}}\Bigg\| \sum_{j\le d}\frac{P_k\Delta P_j\Delta P_{\le d}}{\lambda_j-\lambda_k}\Bigg\|_2^2+8\|S_{> d}\Delta S_{> d}\|_\infty^2 {\mathcal E}^{PCA}_{> d}(\lambda_d)\\
&+16\sum_{k> d}\frac{1}{\lambda_d-\lambda_{k}}\Bigg\| \sum_{j\le d}\frac{P_k\Delta P_j\Delta S_{> d}}{\lambda_j-\lambda_k}\Bigg\|_2^2 {\mathcal E}^{PCA}_{> d}(\lambda_d)
\end{align*}
with $S_{>d}=\sum_{k> d} (\lambda_d-\lambda_k)^{-1/2}P_k$. In particular, on the event $\{\hat{\lambda}_{d}-\lambda_{d}\ge -(\lambda_d-\lambda_{d+1})/2\}$, we have
\begin{align*}
{\mathcal E}^{PCA}_{> d}(\lambda_d )&\le 4\bigg\|\sum_{k> d}(\lambda_d-\lambda_k)^{1/2}\sum_{j\le d} \frac{P_k\Delta P_j}{\lambda_j-\lambda_k}\bigg\|_2^2+2R_3.
\end{align*}
\end{proposition}

\subsection{Consequences for the $L^{k}$-norm}
In this section, we assume that Assumption \ref{SubGauss} holds. Then, similarly as in Section \ref{SecNewBounds}, we can take expectation (resp. the $L^{k}$-norm, $k\ge 1$) in Propositions \ref{LinExpSinTheta}, \ref{LinExpER1}, and \ref{LinExpER2}. Since this leads to lengthy remainder terms, we give the details in the case of the Hilbert-Schmidt distance. Analogous results hold for the excess risk. Let $k\ge 1$ be a natural number. First, by the triangle inequality, the Rosenthal inequality, and  Assumption \ref{SubGauss}, we have
\begin{align}
&\bigg(\mathbb{E}\bigg[\bigg\|\sum_{j\le d}\sum_{k>d} \frac{P_j\Delta P_k}{\lambda_j-\lambda_k}\bigg\|_2^{2k}\bigg]\bigg)^{1/k}\label{EqLkNormLinExp1}\\
&\le \sum_{j\le d}\sum_{k>d} \frac{(\mathbb{E}\|P_j\Delta P_k\|^{2k})^{1/k}}{(\lambda_j-\lambda_k)^{2}}\le C\sum_{j\le d}\sum_{k>d}\frac{\lambda_j\lambda_k}{n(\lambda_j-\lambda_k)^2}\nonumber
\end{align}
with a constant $C>0$ depending only on $k$ and $C_1$ (cf. \eqref{EqMomentConst} for the case $k=1$ and \eqref{eq:eq2} for the case $k=2$).
 Similarly, the first term in $R_1$ is bounded as follows
\begin{align}
&\Bigg(\mathbb{E}\Bigg[\Bigg\| \sum_{j\le d}\sum_{k>d}\frac{P_j\Delta P_k\Delta P_{>d}}{(\lambda_j-\lambda_{d+1})(\lambda_j-\lambda_k)}\Bigg\|_2^{2k}\Bigg]\Bigg)^{1/k}\label{EqLkNormLinExp2}\\
&\le C\sum_{j\le d}
\frac{\lambda_j\tr_{>d}(\Sigma)}{n^2(\lambda_j-\lambda_{d+1})^2}
\bigg(\sum_{k>d}\frac{\lambda_k}{\lambda_j-\lambda_k} \bigg)^2\nonumber
\end{align}
with a constant $C>0$ depending only on $k$ and $C_1$. The middle in $R_1$ can be bounded by using the Cauchy-Schwarz inequality in combination with the fact that bounds of the same order as presented in Proposition \ref{MainRes2} can be derived  for the $L^{2k}$-norm of ${\mathcal E}^{PCA}_{\le d}(\lambda_{d+1})$ (see also the comment after Proposition \ref{PropMainRes1}) and the inequality (see e.g. \cite[Corollary 2]{KL14})
\begin{align*}
&\left( \mathbb{E}\left[ \|S_{\le d}\Delta S_{\le d}\|_\infty^{2k}\right]\right) ^{1/k}\\
&\le C\bigg(\frac{\lambda_d}{\lambda_d-\lambda_{d+1}}\sum_{j\le d}\frac{\lambda_j}{n(\lambda_j-\lambda_{d+1})}\bigvee \bigg(\sum_{j\le d}\frac{\lambda_j}{n(\lambda_j-\lambda_{d+1})}\bigg)^2\bigg)
\end{align*}
with a constant $C>0$ depending only on $k$ and $C_1$.
Hence, if
\[
\frac{\lambda_d}{\lambda_d-\lambda_{d+1}}\sum_{j\le d}\frac{\lambda_j}{\lambda_j-\lambda_{d+1}}\le n/(16C_3^2),
\]
then we have
\begin{align}
&\left( \mathbb{E}\left[ \|S_{\le d}^2\Delta S_{\le d}\|_\infty^{2k} {\mathcal E}^{PCA}_{\le d}(\lambda_{d+1})^k\right]\right)^{1/k}\label{EqLkNormLinExp3}\\
&\le C\bigg(\frac{\lambda_d}{n^2(\lambda_d-\lambda_{d+1})^2}\bigg( \sum_{j\le d}\frac{\lambda_j}{\lambda_j-\lambda_{d+1}}\bigg)^2\bigg)\nonumber\\
&\ \ \ \ \cdot \bigg(\tr_{> d}(\Sigma)+\operatorname{tr}(\Sigma)\exp\bigg(-\frac{n(\lambda_{d}-\lambda_{d+1})^2}{16C_3^2\lambda_{d}^2}\bigg)\bigg)\nonumber
\end{align}
with a constant $C>0$ depending only on $k$ and $C_1$. Similarly, the third term in $R_1$ is bounded as follows
\begin{align}
&\Bigg(\mathbb{E}\Bigg[\Bigg\| \sum_{j\le d}\sum_{k>d}\frac{P_j\Delta P_k\Delta S_{\le d}}{(\lambda_j-\lambda_{d+1})(\lambda_j-\lambda_k)}\Bigg\|_2^{2k} {\mathcal E}^{PCA}_{\le d}(\lambda_{d+1})^k\Bigg]\Bigg)^{1/k}\label{EqLkNormLinExp4}\\
&\le C\bigg(\sum_{j\le d}\frac{\lambda_j}{n^3(\lambda_j-\lambda_{d+1})^2}
\bigg(\sum_{l>d}\frac{\lambda_l}{\lambda_j-\lambda_l} \bigg)^2\bigg( \sum_{m\le d}\frac{\lambda_m}{\lambda_m-\lambda_{d+1}} \bigg)^2\bigg)\nonumber\\
&\ \ \ \ \cdot\bigg(\tr_{> d}(\Sigma)+\operatorname{tr}(\Sigma)\exp\bigg(-\frac{n(\lambda_{d}-\lambda_{d+1})^2}{16C_3^2\lambda_{d}^2}\bigg)\bigg)\nonumber
\end{align}
with a constant $C>0$ depending only on $k$ and $C_1$. Finally, by Proposition \ref{RightDev} and the inequality $\|P_{\le d}-\hat P_{\le d}\|_2^2\le 2d$, if
\[
\frac{\lambda_{d+1}}{\lambda_d-\lambda_{d+1}}\sum_{k>d}\frac{\lambda_{k}}{\lambda_d-\lambda_{k}}\le 4n/C_3^2,
\]
then
\begin{align}\label{EqLkNormLinExp5}
&\left(\mathbb{E}\left[\mathbbm{1}\big(\hat{\lambda}_{d+1}-\lambda_{d+1}> (\lambda_d-\lambda_{d+1})/2\big)\|P_{\le d}-\hat P_{\le d}\|_2^{2k}\right]\right)^{1/k}\\
&\le 2d\exp\bigg(-\frac{n(\lambda_{d}-\lambda_{d+1})^2}{k(2C_3\lambda_{d})^2}\bigg).\nonumber
\end{align}
By \eqref{EqLkNormLinExp1}-\eqref{EqLkNormLinExp5} for $k=1$, we immediately get an upper bound for $\mathbb{E}[\|P_{\le d}-\hat P_{\le d}\|_2^2]$ as well as for $|\mathbb{E}[\|P_{\le d}-\hat P_{\le d}\|_2^2]-\mathbb{E}\|L_{1}(\Delta)\|_2^2|$.
\subsection{Applications}\label{AppAppli}
Let us illustrate our different bounds for exponential decay and polynomial decay.
\subsubsection*{Exponential decay}
Assume that for some $\alpha>0$, $\lambda_j=e^{-\alpha j}$ for every $j\ge 1$. Let us begin with applying Proposition \ref{LinExpSinTheta} to the Hilbert-Schmidt distance (note that under exponential decay, the eigenvalue expressions in \eqref{EqLkNormLinExp1}-\eqref{EqLkNormLinExp5} are easy to compute by using the inequality $\lambda_j-\lambda_k\ge (1-e^{-\alpha})\lambda_j$, valid for every $j<k$). First, by \eqref{EqLinExpSinThetaC1}, we get the upper bound
\begin{equation*}
\mathbb{E}[ \Vert \hat{P}_{\le d}-P_{\le d}\Vert_2^2]\le Cn^{-1}+Cd^2n^{-2},
\end{equation*}
provided that $d\le cn$, where $c,C>0$ are constants depending
only on $C_1$ and $\alpha$.
Moreover, assuming additionally  that $X$ is Gaussian, we have $\mathbb{E}\|L_{1}(\Delta)\|_2^2\ge (e^{\alpha}-1)^{-1}n^{-1}$ and by \eqref{EqLinExpSinThetaC2} we get the lower bound (after some computation)
\begin{equation*}
\mathbb{E}[ \Vert \hat{P}_{\le d}-P_{\le d}\Vert_2^2]\ge 2^{-1}(e^{\alpha}-1)^{-1}n^{-1},
\end{equation*}
provided that $d\le c\sqrt{n}$, where $c>0$ is a constants depending
only on $C_1$ and $\alpha$. The upper bound can be compared to the bounds by Mas and Ruymgaart \cite{MR15} and Koltchinskii and Lounici \cite{KL15a}, who treat spectral projectors via resolvents and holomorphic functional calculus. \cite[Theorem 5]{MR15} says that
\[
\mathbb{E}[ \Vert (\hat{P}_{\le d}-P_{\le d})u\Vert^2]\le Cd^2\log^2 (n)n^{-1}
\]
for all $d\ge 2$, $n\ge 2$, and certain unit vectors $u$. Since the left-hand side is bounded by $4$, the bound is only useful if $d\leq c\sqrt{n}$. In contrast, we establish sharper results in the larger range $d\leq cn$. Moreover, applying \cite[Lemma~2]{KL15a}, we have
\[
\mathbb{E}[ \|\hat{P}_{\le d}-P_{\le d}\|_2^2]\le  Cn^{-1}+Cde^{5\alpha d}n^{-2}.
\]
For the remainder term to be small, this requires $n^2$ to be much larger than $e^{5\alpha d}$, which our analysis avoids.

We now apply Proposition  \ref{LinExpER1} to the excess risk. In Section \ref{ExExpDecay} we showed that ${\mathcal E}^{PCA}_{\le d}(\lambda_{d+1})\le {\mathcal E}^{PCA}_d \le (1-e^{-\alpha})^{-1}{\mathcal E}^{PCA}_{\le d}(\lambda_{d+1})$. Thus, if $X$ is Gaussian, then \eqref{EqLinExpER1C2} in combination with Proposition \ref{MainRes2} and the analogues of \eqref{EqLkNormLinExp1}-\eqref{EqLkNormLinExp5} for $k=1$ gives
\begin{equation}\label{EqEROptBoundED}
C^{-1} d e^{-\alpha d}n^{-1}\le \mathbb{E}[{\mathcal E}^{PCA}_d]
\le Cd e^{-\alpha d}n^{-1},
\end{equation}
provided that $d\le cn$, where $c,C>0$ are constants depending
only on $\alpha$.

\subsubsection*{Polynomial decay}
Assume that  for some $\alpha>1$, $\lambda_j=j^{-\alpha}$ for every $j\ge 1$. Let us begin with applying Proposition \ref{LinExpSinTheta} to the Hilbert-Schmidt distance (note that under polynomial decay, the eigenvalue expressions in \eqref{EqLkNormLinExp1}-\eqref{EqLkNormLinExp5} can be easily computed using \eqref{EqEVExp1}-\eqref{EqEVExp4}). First, by \eqref{EqLinExpSinThetaC1}, we get
\begin{equation*}
\mathbb{E}[ \Vert \hat{P}_{\le d}-P_{\le d}\Vert_2^2]\le C(d^2\log (ed)n^{-1}+d^5\log^2 (ed)n^{-2}+d^7\log^4 (ed)n^{-3}),
\end{equation*}
provided that $d^2\log (ed)\le cn$, where $c,C>0$  are constants depending
only on $C_1$ and $\alpha$. Compared to \cite[Theorem 5]{MR15}, where the order $d^2\log^2 (n)\log^2 (d)n^{-1}$ is derived, this upper bound improves upon the $\log^2 (n)$-factor, but involves a remainder which might be harmful for $d^3n^{-1}$ large, but $d^2n^{-1}$ small. For this case higher order than linear expansions can extend the domain where the bound $d^2\log (ed)n^{-1}$ holds.
%Our aim being to provide very general bounds and methodology, this derivation is not pursued here.

Assuming additionally  that $X$ is Gaussian, we have $\mathbb{E}\|L_{1}(\Delta)\|_2^2\ge c_1d^2\log (ed)n^{-1}$ (with $c_1>0$ depending only on $\alpha$) and thus we get the lower bound
\begin{equation*}
\mathbb{E}[ \Vert \hat{P}_{\le d}-P_{\le d}\Vert_2^2]\ge 2^{-1}c_1d^2\log (ed)n^{-1},
\end{equation*}
provided that $d^3\log (ed)\le cn$, where $c>0$  is a constant depending
only on $C_1$ and $\alpha$. 

We now turn to the excess risk. First, \eqref{EqLinExpER1C1} in combination with Proposition~\ref{MainRes2} and the analogues of \eqref{EqLkNormLinExp1}-\eqref{EqLkNormLinExp5} with $k=1$ gives
\begin{equation}\label{EqERUpperBoundPD}
\mathbb{E}[{\mathcal E}^{PCA}_{\le d}(\lambda_{d+1})] \le Cd^{2-\alpha}n^{-1},
\end{equation}
%\[
%\mathbb{E}[{\mathcal E}^{PCA}_{\le d}(\lambda_{d+1})]\le C(d^{2-\alpha}n^{-1}
%+d^{4-\alpha}\log^3 (ed)n^{-2}+d^{6-\alpha}\log^5(ed)n^{-3})\le Cd^{2-\alpha}%n^{-1},
%\]
provided that $d^2\log^3 (d)\le cn$, where $c,C>0$  are constants depending
only on $C_1$ and $\alpha$. Moreover, assuming additionally that $X$ is Gaussian, we have $\mathbb{E}\|L_{2}(\Delta)\|_2^2\ge c_1 d^{2-\alpha}n^{-1}$  (with $c_1>0$ depending only on $\alpha$) and by \eqref{EqLinExpER1C2} we get the lower bound 
\begin{equation}\label{EqERLowerBoundPD}
\mathbb{E}[{\mathcal E}^{PCA}_{\le d}(\lambda_{d+1})]\ge 2^{-1}c_1 d^{2-\alpha}n^{-1},
\end{equation}
provided that $d^{5/2}\log (ed)\le cn$, where $c,C>0$  are constants depending
only on $C_1$ and $\alpha$. Finally, combining Propositions \ref{LinExpER1}, \ref{LinExpER2} with Theorem \ref{ThmGlobalLocalBound}, we obtain the
bound 
\begin{equation}\label{EqERLinExpPD}
\mathbb{E}[{\mathcal E}^{PCA}_{d}]\le C(d^{2-\alpha}n^{-1}+d^3\log^3 (ed)^3n^{-2}+d^5\log^5 (ed)n^{-3}),
\end{equation}
provided that $d^2\log (d)\le cn$, where $c,C>0$  are constants depending
only on $C_1$ and $\alpha$.

\subsection{Some eigenvalue expressions}
Suppose that for some $\alpha>1$, $\lambda_j=j^{-\alpha}$ for every $j\ge 1$. Then there is a constant $C>0$ depending only on $\alpha$ such that, for every $d\ge 1$,
\begin{align}
&\sum_{j\ne d}\frac{\lambda_j}{|\lambda_j-\lambda_d|}\le Cd\log (ed),\label{EqEVExp1}\\
&\sum_{j\ne d}\frac{\lambda_j}{(\lambda_j-\lambda_d)^2}\le Cd^{2+\alpha},\label{EqEVExp2}\\
&\sum_{j\le d}\sum_{k>d}\frac{\lambda_j\lambda_k}{\lambda_j-\lambda_k}\le Cd^{2-\alpha},\label{EqEVExp3}\\
&\sum_{j\le d}\sum_{k>d}\frac{\lambda_j\lambda_k}{(\lambda_j-\lambda_k)^2}\le Cd^2\log (ed).\label{EqEVExp4}
\end{align}
Moreover, these bounds are sharp in the sense that the reverse inequalities also hold, again with a constant depending only on $\alpha$. For \eqref{EqEVExp1} and \eqref{EqEVExp2}, see \cite[Lemma 7.13]{M15} and the references therein. Inequalities \eqref{EqEVExp3} and \eqref{EqEVExp4} can be shown similarly. Indeed, in order to prove \eqref{EqEVExp3}, decompose the sum as follows:
\begin{align}
\sum_{j\le d}\sum_{k>d}\frac{\lambda_j\lambda_k}{\lambda_j-\lambda_k}
&=\Big(\sum_{j\le d/2}\sum_{k>d}+\sum_{j\le d}\sum_{k>2d}+\sum_{d/2<j\le d}\sum_{d<k\le 2d}\Big)\frac{\lambda_j\lambda_k}{\lambda_j-\lambda_k}\label{EqEVExpDec}\\
&=:(I)+(II)+(III).\nonumber
\end{align}
Since $\alpha>1$, we have $j\lambda_j=j^{1-\alpha}>k^{1-\alpha}=k\lambda_k$ for every $j<k$ and thus
\begin{equation}\label{EqEVConvCond}
\frac{\lambda_j}{\lambda_j-\lambda_k}< \frac{k}{k-j}\qquad \text{for every }j<k.
\end{equation}
For $j\le d/2,k>d$ and $j\le d,k> 2d$, we have $k/(k-j)\le 2$. Combining this with \eqref{EqEVConvCond}, we get
\begin{equation}\label{EqSumPartI}
(I)\le \sum_{j\le d/2}\sum_{k>d} 2\lambda_k\le Cd^{2-\alpha},\quad (II)\le \sum_{j\le d}\sum_{k\ge 2d} 2\lambda_k\le Cd^{2-\alpha}.
\end{equation}
Moreover, by \eqref{EqEVConvCond}, we have
\begin{equation*}
(III)\le \sum_{d/2<j\le d}\sum_{d<k\le 2d}\frac{k\lambda_k}{k-j}\le d^{1-\alpha}\sum_{d/2<j\le d}\sum_{d<k\le 2d}\frac{1}{k-j}.
\end{equation*}
Now, for $l\ge 1$ the number of indices $(j,k)$ in the latter sum satisfying $k-j=l$ is less than or equal to $l$ if $l\le 2d$ and equal to $0$ otherwise. Hence,
\begin{equation}\label{EqSumPartIII}
(III)\le 2d^{2-\alpha}.
\end{equation}
Inserting \eqref{EqSumPartI} and \eqref{EqSumPartIII} into \eqref{EqEVExpDec}, \eqref{EqEVExp3} follows. For the reverse inequality, consider only the term $(III)$. By convexity, we have $(k^\alpha-j^{\alpha})/(k-j)\le \alpha k^{\alpha-1}$ for every $j<k$ and thus
\[
(III)=\sum_{d/2<j\le d}\sum_{d<k\le 2d}\frac{1}{k^\alpha-j^{\alpha}}\ge \alpha^{-1}(2d)^{1-\alpha}\sum_{d/2<j\le d}\sum_{d<k\le 2d}\frac{1}{k-j}.
\]
By the above argument, the last sum is lower bounded by a constant times $d$ and the claim follows. \eqref{EqEVExp4} follows from the same line of arguments, using also the inequality $\log(m+1)\le \sum_{l\le m}1/l\le \log(em)$, valid for every natural number $m\ge 1$.

\bibliographystyle{plain}
\bibliography{lit}

\end{document}